\documentclass[3p,12pt]{elsarticle}
\biboptions{numbers,sort&compress}

\usepackage{amsmath}
\usepackage{amssymb}
\usepackage{amsthm}
\usepackage{graphicx}
\usepackage{tikz}
\usetikzlibrary{arrows.meta,calc}
\usepackage{caption}
\usepackage{bookmark}
\usepackage{microtype}
\newcommand{\C}{\mathcal{C}}
\newcommand{\nC}{n_{\mathcal{C}}}
\newcommand{\taugrad}{\nabla_\tau}
\newcommand{\nderiv}{\partial_{\nC}}
\newcommand{\dA}{d_{\mathcal{A}}}

\newcounter{statement}[section]

\newtheorem{theorem}[statement]{Theorem}
\newtheorem{lemma}[statement]{Lemma}
\newtheorem{proposition}[statement]{Proposition}
\newtheorem{corollary}[statement]{Corollary}
\theoremstyle{definition}
\newtheorem{definition}[statement]{Definition}

\newtheorem{remark}[statement]{Remark}

\numberwithin{equation}{section}

\journal{Commun.\ Nonlinear Sci.\ Numer.\ Simul.}

\makeatletter
\@ifundefined{affiliation}{%
  \newcommand{\affiliation}[2][]{%
    \begingroup
    \def\organization##1{##1}\def\addressline##1{##1}%
    \def\city##1{##1}\def\postcode##1{##1}\def\country##1{##1}%
    \address[#1]{#2}%
    \endgroup}%
}{}
\makeatother

\begin{document}

\begin{frontmatter}

\title{Shortest Paths in Domains with Crack-Induced Constraints}

\author[fas,vnu]{Dau The Phiet\corref{cor1}}
\ead{dauthephiet@hcmut.edu.vn}
\cortext[cor1]{Corresponding author}

\author[imcs,fas,vnu]{Phan Thanh An}

\affiliation[imcs]{organization={Institute of Mathematical and Computational Sciences, Ho Chi Minh City University of Technology (HCMUT)},
            addressline={268 Ly Thuong Kiet, Dien Hong Ward},
            city={Ho Chi Minh City},
            country={Vietnam}}
\affiliation[fas]{organization={Faculty of Applied Science, Ho Chi Minh City University of Technology (HCMUT)},
            addressline={268 Ly Thuong Kiet, Dien Hong Ward},
            city={Ho Chi Minh City},
            country={Vietnam}}
\affiliation[vnu]{organization={Vietnam National University Ho Chi Minh City},
            addressline={Linh Xuan Ward},
            city={Ho Chi Minh City},
            country={Vietnam}}

\begin{abstract}
We formulate the shortest-path problem in a domain containing a crack as a Hamilton--Jacobi equation carrying a state constraint on the crack, which on the binding part of the crack takes the form of a variational inequality: the crack is a two-sided barrier that may be touched and followed but not traversed, so that it acts as a lower-dimensional state constraint, carried separately by each of its two lips, which admits tangential propagation but forbids crossing. We establish a comparison principle and uniqueness of the value function through trajectory-based arguments that circumvent the failure of the classical doubling technique of Crandall, Ishii, and Lions at the crack. The variational inequality induces a free boundary on the crack separating a sliding regime, where optimal trajectories run along the crack over a finite segment, from an illuminated regime, where the crack is reached transversally by direct rays; we give a complete geometric characterization of both regimes, including grazing (tangential) contact, the absence of reflection at regular crack points, and a diffraction law at crack tips, all derived from optimality alone. A simulation of the forward problem on an exactly solvable configuration reproduces the three regimes and the tip-diffraction law to the accuracy of the closed-form solution.
\end{abstract}

\begin{keyword}
Hamilton--Jacobi equation \sep variational inequality \sep viscosity solution \sep
shortest path \sep crack \sep free boundary \sep state constraint \sep
slit domain \sep geometric theory of diffraction
\MSC[2020] 49L25 \sep 35D40 \sep 35F21 \sep 35R35 \sep 49J40
\end{keyword}

\end{frontmatter}

\section{Introduction}

The computation of distance functions and shortest paths via Hamilton--Jacobi equations has been extensively studied. Classical formulations based on the Eikonal equation admit a well-developed viscosity solution theory, with foundational contributions by Michael G. Crandall and Pierre-Louis Lions \cite{CrandallLions1983, CrandallIshiiLions1992, BarlesPerthame1988}; comprehensive expositions can be found in \cite{Barles1994, BardiCapuzzoDolcetta1997, CannarsaSinestrari2004, FlemingSoner2006}. In its simplest form, the distance function $u$ from a source point $x^*$ is characterized as the viscosity solution of
\[
|\nabla u(x)| = 1,
\]
posed in a domain $\Omega$ with appropriate boundary conditions. Efficient numerical methods such as the Fast Marching Method \cite{Sethian1999, KimmelSethian1998}, ordered upwind methods \cite{SethianVladimirsky2003}, and the Fast Sweeping Method \cite{Zhao2005} enable the computation of such solutions with optimal complexity and have become standard tools in scientific computing. For shortest-path computation in polygonal domains, a related discrete-geometry literature has developed independently \cite{MitchellMountPapadimitriou1987, AnHoai2012, AnPhu2023, AnHoaiThinh2024}.

Despite their success, classical Eikonal-based models implicitly assume that the domain is either homogeneous or contains obstacles treated as hard constraints (e.g., excluded regions or absorbing boundaries). However, many applications involve lower-dimensional geometric structures, such as cracks, fractures, or thin barriers, which fundamentally alter the topology of admissible paths. These structures do not merely block propagation; instead, they induce nontrivial geometric behaviors in optimal trajectories, including tangential contact and sliding along the constraint.

Existing approaches typically model cracks either by removing them from the domain or by imposing boundary conditions along their interface. While these strategies ensure well-posedness, they do not fully capture the intrinsic variational structure of shortest paths near cracks. In particular, they fail to describe the transition between trajectories that merely touch the crack and those that propagate along it. This transition is inherently a free-boundary phenomenon and lies beyond the scope of the standard Eikonal framework.

A related variational setting is the obstacle problem, where the solution is constrained to lie above a prescribed obstacle. This leads to a variational inequality with a complementary condition distinguishing free and contact regions \cite{KinderlehrerStampacchia1980, Friedman1982, CaffarelliSalsa2005}. Although this framework also involves a free boundary, the constraint acts directly on the function value rather than on its gradient or admissible directions. As a consequence, it does not capture the geometric behavior of shortest paths interacting with embedded constraints such as cracks.

In this work, we propose a Hamilton--Jacobi formulation with a variational inequality on the crack set that naturally encodes these effects. Specifically, we consider a domain $\Omega$ containing a crack $\C$, and define the distance function $u$ as the value function of an optimal control problem in which trajectories are constrained to avoid crossing the crack while being allowed to move tangentially along it. This is closely related to Hamilton--Jacobi equations with state constraints \cite{Soner1986, Soner1986II, Frankowska1993, IshiiKoike1996}. The resulting model consists of the classical Eikonal equation in the bulk together with a state-constraint condition on each lip of $\C$ that enforces both non-penetration and tangential propagation; where that condition binds it takes the form of a variational inequality, and Remark~\ref{rem:reconcile} delimits precisely where it does.

On the analytic side, the model is close in spirit to Hamilton--Jacobi equations posed on networks and stratified domains \cite{AchdouCamilliCutriTchou2013, BarlesBrianiChasseigne2013, ImbertMonneau2017}, in which the dynamics on the lower-dimensional set are coupled to the ambient equation through admissibility and matching conditions. 

This formulation reveals that the crack induces a free-boundary structure separating regions where the constraint is active (pure contact) from regions where propagation along the crack occurs (sliding). Optimal trajectories exhibit a rigid geometric behavior: at an \emph{interior} contact with a regular crack point the velocity is tangential, so that a trajectory passing a crack point either grazes it or slides along it; a crack point may still be reached transversally, but only as the endpoint of the trajectory. Moreover, we show that an optimal trajectory never reflects off the crack at a regular point, and that in the plane it changes direction only at crack tips, at junctions, and along sliding arcs, the behaviour at a tip being constrained by a one-sided diffraction inequality; all of this is derived purely from optimality.

Our main contributions are as follows:
\begin{itemize}
\item We introduce a Hamilton--Jacobi variational inequality framework for shortest paths in domains with cracks, which enforces non-penetration while allowing tangential propagation along the crack. The crack is treated as a two-sided barrier and the equation is posed on the slit completion of the domain, each of its two lips carrying its own state constraint; we show by example that a co-orientation of a crack making one lip systematically the cheaper one need not exist, so that the two-sided formulation is a necessity rather than a technical convenience.
\item We prove well-posedness of the model in the viscosity sense: the value function is the unique viscosity solution, and it is sandwiched between every subsolution and every supersolution (Theorems~\ref{thm:main} and~\ref{thm:comparison}).
\item We identify and characterize the free boundary on the crack that separates sliding and non-sliding regimes.
\item We describe optimal trajectories in the plane, for a convex domain and a $C^2$ crack (Theorem~\ref{thm:structure}): they consist of straight segments, sliding arcs along the crack, and pivots at tips and junctions; interior contact at regular crack points is tangential and never reflective, and at a tip the trajectory bends only around the crack, the tip acting as a secondary source.
\end{itemize}

The proposed framework bridges classical distance function computation with constrained variational problems on lower-dimensional structures. It also opens the way to extensions involving directional constraints, visibility, and anisotropic propagation, with potential applications in robotic sensing and path planning in complex environments.

\section{Problem Formulation}\label{sec:formulation}

Let $\Omega \subset \mathbb{R}^n$ be a bounded open domain with $C^{1,1}$ boundary, and let $\C \subset \overline{\Omega}$ denote a closed set representing a crack. We assume that $\C$ is a finite union of compact $C^{1,1}$ hypersurfaces-with-boundary, that distinct sheets meet transversally and in a set of dimension at most $n-2$, and that $\C$ meets $\partial\Omega$ transversally. Transversality excludes cusps: it guarantees that each local component of $(B_r(x)\cap\Omega)\setminus\C$ has a tangent cone with non-empty interior at $x$, equal to the closure of that interior. At a corner of a merely Lipschitz boundary this already fails, the tangent cone being strictly larger than the set of directions along which one can move at all; this is why $\partial\Omega$ is taken smooth. Being $C^{1,1}$ and bounded, $\overline\Omega$ is quasiconvex. Theorem~\ref{thm:structure} will require more, namely $C^2$ arcs in the plane, and says so. At each regular point $x \in \C$ we denote by $T_x \C$ the tangent space and by $\nu(x)$ a unit normal, defined only up to sign.

\begin{remark}[Why $C^{1,1}$ and not $C^1$]\label{rem:c11}
Two of the arguments below need a \emph{uniform} two-sided tubular neighborhood of each regular point, and a merely $C^1$ sheet need not have one, because its curvature may be unbounded and its cut locus may then reach the sheet itself. For a witness, take $n=2$, $\Omega=B(0,2)$ and $\C$ the graph of $f(t)=|t|^{3/2}$ over $[-1,1]$, which is a compact connected $C^1$ arc. At the regular point $y=0$ the concave side carries a cut locus along the whole segment $\{(0,t):0<t<t_0\}$: the point $(0,t)$ has the two nearest points $(\pm s_*,s_*^{3/2})$ with $s_*=\tfrac94t^2+o(t^2)$, and $\operatorname{dist}(\cdot,\C)$ has a genuine corner there --- its one-sided derivatives in $t_1$ at $(0,t)$ are $\mp0.2131$ for $t=0.1$ and $\mp0.00267$ for $t=0.001$, computed directly. So the cut locus accumulates at $y$, no neighborhood of $y$ carries a differentiable signed distance, and the set over which clause~(ii) of Definition~\ref{def:admissible} quantifies at $y$ would be empty --- leaving a transversal crossing at $y$ admissible and collapsing $\dA$ between the two lips over $y$ to zero. Under $C^{1,1}$ the second fundamental form is bounded, the reach of each sheet is positive, and both the tubular neighborhood and the normal push-off used in Remark~\ref{rem:lift} are available. We use no more than this.
\end{remark}
The crack is a \emph{two-sided barrier}: a trajectory may touch it and travel along it, but may not pass from one of its sides to the other.
No global co-orientation of $\C$ is chosen, and none is available in general.
Instead, a trajectory meets the crack from one of the two sides, and we write $\nC$ for the unit normal pointing \emph{away} from that side, that is, toward the side the trajectory may not enter.
This convention is local and involves no choice: the two sides are kept apart by working in the slit completion $\widehat\Omega$ of Remark~\ref{rem:slit}, in which every regular crack point is split into two lips, one for each side, and $\nC$ is single-valued on each lip.
With it, the velocities available at a crack point satisfy
\[
d\cdot \nC\le 0,
\]
which prevents trajectories from crossing the crack.

It is convenient to record the constraint through a signed distance function.
For a regular point $y\in\C$, let $U$ be a sufficiently thin tubular neighborhood of $y$ meeting no tip or junction; such a $U$ exists, and is disjoint from the cut locus, because the sheet carrying $y$ is $C^{1,1}$ and hence of positive reach (Remark~\ref{rem:c11}). On $U$ the \emph{signed distance to the crack}
\[
s_\C(x):=\pm\operatorname{dist}(x,\C),
\]
with the sign $-$ on the side from which the crack is approached and $+$ on the far side, is of class $C^{1,1}$ and satisfies $|\nabla s_\C|\equiv1$ in $U$ together with $\nabla s_\C(y)=n_\C(y)$ for $y\in\C$. Thus the side carrying the trajectory is locally $\{s_\C<0\}$, and the velocity constraint $d\cdot\nC\le0$ is the same as $d\cdot\nabla s_\C\le0$. Only the sign of $s_\C$ depends on the side, and that is immaterial here: the non-crossing requirement below is invariant under changing the sign, so no globally defined branch of $s_\C$ is needed. This matters because none exists in general: around a tip the two sides merge, and the sign flips as one encircles it.
Moreover, even where the sign is fixed, $s_\C$ is smooth only away from the cut locus of $\C$, where $\operatorname{dist}(\cdot,\C)$ ceases to be differentiable. Under $C^{1,1}$ regularity this never intervenes: each sheet has positive reach, so every regular point has a tubular neighborhood $U$ disjoint from the tips, junctions, and cut locus, which is all that is used below. Remark~\ref{rem:c11} shows that this is exactly what $C^1$ fails to provide.

For brevity we call $\{s_\C<0\}\cap U$ the \emph{near side} of the crack at $y$ --- the side from which $y$ is approached, equivalently the side of the lip under consideration --- and $\{s_\C>0\}\cap U$ the \emph{far side}. These two names are attached to a lip, not to the crack: the near side of one lip is the far side of the other, and no global labelling of the two sides of $\C$ is implied by them.

It is convenient to record, once and for all, the following structure and terminology.
Write $\C=\bigcup_j\C_j$ as a finite union of compact connected $C^1$ hypersurfaces-with-boundary $\C_j\subset\overline\Omega$.
The \emph{junction set} $\C_{\mathrm{self}}$ consists of the points lying on two or more sheets (junctions where sheets meet, and crossings) together with the relative boundary points of the sheets that lie on $\partial\Omega$; it is lower-dimensional.
The \emph{tip set} $\partial\C:=\bigl(\bigcup_j\partial\C_j\bigr)\setminus\C_{\mathrm{self}}$ consists of the relative (manifold) boundary points that belong to a single sheet and lie in the interior of $\Omega$, that is, the free ends of the crack; it is $(n-2)$-dimensional and reduces to finitely many points when $n=2$.
Only these can be rounded: where a sheet ends on $\partial\Omega$ there is no room to pass on the far side within $\overline\Omega$, so such an endpoint is sealed and is counted as a junction, where crossing is forbidden by condition~(ii) of Definition~\ref{def:admissible}.
We write $\C_{\mathrm{sing}}:=\partial\C\cup\C_{\mathrm{self}}$ for the singular set (tips and junctions) and $\C_{\mathrm{reg}}:=\C\setminus\C_{\mathrm{sing}}$ for the regular part.
Here $\partial\C$ is the \emph{relative} boundary of the hypersurface, not the topological boundary of $\C$ in $\mathbb{R}^n$ --- the latter, $\C$ having empty interior, would be all of $\C$.
We further take the crack to obstruct genuinely: at every $x\in\C_{\mathrm{reg}}$ both half-neighborhoods $\{s_\C>0\}\cap\Omega$ and $\{s_\C<0\}\cap\Omega$ are non-empty.
This is in fact automatic: by the definition of $\C_{\mathrm{self}}$ just given, together with the transversality of $\C$ and $\partial\Omega$, every point of $\C\cap\partial\Omega$ is a junction, so $\C_{\mathrm{reg}}\subset\Omega$. We record the property because the half-sphere formulas for $A$ and $A^{\mathrm{arr}}$, the computation \eqref{eq:sup-explicit}, and the step ``both $\tau$ and $-\tau$ belong to $A^{\mathrm{arr}}(x_0)$'' in Step~1 below all presuppose it, and all would fail at a regular crack point lying on $\partial\Omega$.
Finally, near a tip $p\in\partial\C$ the two sides join --- $\Omega\setminus\C$ is connected in a punctured neighborhood of $p$ --- which is exactly what allows an admissible trajectory to pass from one side of a sheet to the other by rounding $p$.
At a junction $y\in\C_{\mathrm{self}}$, by contrast, the sheets meeting at $y$ may seal the passage, and the relevant object is the set of connected components of $(B_r(y)\cap\Omega)\setminus\C$ for small $r>0$: a trajectory may not pass from one component to another, which is the junction clause of Definition~\ref{def:admissible}.
No labelling of the components is needed.

Let $x^* \in \Omega \setminus \C$ be a fixed source point. We consider curves $\gamma : [0,T] \to \overline{\Omega}$ satisfying
\[
\gamma(0) = x^*, \quad \gamma(T) = x,
\]
which are constrained by the presence of the crack. Specifically, trajectories are not allowed to cross $\C$, but may move tangentially along it.

This constraint is encoded through the admissible velocity set. A trajectory sitting at $x$ may move only into the part of the domain available to it there, which --- once the two sides of the crack are kept apart --- is one connected piece of a small punctured neighborhood. Write $K(\hat x)$ for that piece: for $\hat x$ a lip over $x\in\C_{\mathrm{reg}}$ it is the near side of the lip, for a tip it is the whole punctured neighborhood, since the two sides merge there, and for a junction copy it is the corresponding connected component of $(B_r(x)\cap\Omega)\setminus\C$. Let $T(\hat x)$ denote the tangent cone of $\overline{K(\hat x)}$ at $x$: a closed cone which, by the transversality assumed above, has non-empty interior and is the closure of it. We set
\begin{equation}\label{eq:A}
A(\hat x)=\bigl\{d:|d|=1,\ d\in T(\hat x)\bigr\} .
\end{equation}
Explicitly, $A(\hat x)$ is the full unit sphere at an interior bulk point and at a tip, the inward half-sphere at a point of $\partial\Omega\setminus\C$ --- the domain boundary is a constraint like any other, and the single formula records it --- and the closed half-sphere
\[
\{d:|d|=1,\ d\cdot n_\C(x)\le0\}
\]
on a lip over $x\in\C_{\mathrm{reg}}$ --- the half pointing into the side currently carrying the trajectory, which prevents crossing while still allowing tangential propagation. At a junction it is the set of unit vectors in the tangent cone of the component the trajectory occupies: at the corner of a polygonal crack, for instance, the directions pointing into that corner's sector. Writing the three cases by one formula matters at junctions, where the full sphere would permit a trajectory to leave a sector through a sheet. The tip is the one place where no restriction is imposed, and legitimately so: the two sides genuinely merge there and trajectories may round the tip freely.

For the backward (dynamic programming) analysis we also need the
\emph{arrival direction set}
\begin{equation}\label{eq:Aarr}
A^{\mathrm{arr}}(\hat x)=-A(\hat x)=\bigl\{d:|d|=1,\ -d\in T(\hat x)\bigr\},
\end{equation}
consisting of the terminal velocities with which $\hat x$ can be reached along an admissible curve: the predecessor $x-hd$ must itself be available, i.e. $-d$ must point into $K(\hat x)$. On a lip this reads
\[
A^{\mathrm{arr}}(\hat x)=\{d:|d|=1,\ d\cdot n_\C(x)\ge0\} ,
\]
since the predecessor lies on the near side, $s_\C(x-hd)\le0$. Thus the forward (departure) constraint $d\cdot\nC\le0$ forbids leaving the side, whereas the arrival constraint $d\cdot\nC\ge0$ records that one must have come \emph{from} that side: the two half-spheres are opposite. The distinction is invisible in the bulk and at tips, where both sets are the full sphere, but essential on $\C_{\mathrm{reg}}$, and it is exactly what the free-boundary analysis of Section~\ref{sec:freeboundary} renders transparent.

\begin{definition}[Admissible trajectory]\label{def:admissible}
A Lipschitz curve $\gamma:[0,T]\to\overline\Omega$ is \emph{admissible} if
\begin{itemize}
\item[(i)] $\dot\gamma(t)\in T(\hat\gamma(t))$ for a.e. $t\in[0,T]$, where $\hat\gamma$ is a lift of $\gamma$ supplied by~(ii) --- the two lips over a regular crack point carry opposite normals, so the constraint is attached to the lift and not to the projection, and (i) is to be read after (ii). We impose membership in the \emph{cone} $T(\hat\gamma)$ rather than in $A(\hat\gamma)=T(\hat\gamma)\cap S^{n-1}$, so that curves are not forced to have unit speed; this matters because the length functional below is minimized over curves of arbitrary parametrization. And
\item[(ii)] $\gamma$ does not cross $\C$: for every regular point $y\in\C_{\mathrm{reg}}$ and every tubular neighborhood $U$ of $y$ on which the local signed distance $s_\C$ is defined as above, the function $t\mapsto s_\C(\gamma(t))$ does not change sign on any subinterval where $\gamma$ remains in $U$; and for every junction $y\in\C_{\mathrm{self}}$ and every small ball $B=B_r(y)$, $\gamma$ stays in the closure of a single connected component of $(B\cap\Omega)\setminus\C$ on any subinterval where it remains in $B$. The intersection with $\Omega$ matters where the junction lies on $\partial\Omega$: there $B\setminus\C$ may be connected through the exterior of $\Omega$, which no trajectory may use, while $(B\cap\Omega)\setminus\C$ is not.
Both clauses are invariant under reversal of $\gamma$ and under the choice of sign of $s_\C$.
\end{itemize}
\end{definition}

\begin{remark}
Condition (ii) is not implied by (i): a trajectory crossing $\C$ transversally meets the crack on a set of times of measure zero, which the a.e. condition (i) cannot detect. Condition (ii) closes this loophole and encodes the actual geometric meaning of the crack as a barrier. Its junction clause closes a second loophole: at a corner of a polygonal crack no tubular neighborhood of a regular point is available, and without this clause a trajectory could slip through the crack exactly at the corner.
\end{remark}

We define the value function by
\[
u(\hat x) = \inf_{\gamma} \int_0^T |\dot{\gamma}(t)|dt,
\]
the infimum being taken over all admissible curves in the sense of Definition~\ref{def:admissible} from $x^*$ to $\hat x$. The argument is a point of the slit completion $\widehat\Omega$, and this matters: an admissible curve ends on one lip, so $u$ is \emph{two-valued} over a regular crack point, one value per lip, and it is these two values that Remark~\ref{rem:slit} calls the traces $u_\pm$. Where a single-valued function on $\overline\Omega$ is wanted --- as in the definition of the forward problem --- one takes the smaller trace, $\min(u_+,u_-)$; everything below works with the two-valued object on $\widehat\Omega$. By Remark~\ref{rem:lift}, $u=\operatorname{dist}_{\widehat\Omega}(x^*,\cdot)$.

By construction, $u(x)$ represents the shortest path distance from $x^*$ to $x$ under the constraint induced by the crack.

\begin{definition}[Admissible path quasi-metric]\label{def:pathmetric}
For $x,y\in\overline\Omega$, let
\[
\dA(x,y):=\inf\Bigl\{\textstyle\int_0^T|\dot\gamma|dt :
\gamma \text{ admissible},\gamma(0)=x,\gamma(T)=y\Bigr\},
\]
with the convention $\inf\emptyset=+\infty$. Admissibility is invariant under reversal of a curve: clause~(ii) is manifestly so, and clause~(i) follows, since by Remark~\ref{rem:lift} it is implied by clause~(ii) at almost every time. Hence the reverse of an admissible curve is admissible and $\dA$ is symmetric. It is a metric on the slit completion $\widehat\Omega$ of Remark~\ref{rem:slit}, with values in $[0,+\infty]$, the value $+\infty$ occurring exactly between distinct connected components, as happens when the crack closes around one of the two points. It is \emph{not} a metric on $\overline\Omega$: the triangle inequality fails at every regular crack point, since two curves ending on opposite lips of $y\in\C_{\mathrm{reg}}$ cannot be concatenated at $y$ without violating clause~(ii), so that $\dA(x,z)$ may exceed $\dA(x,y)+\dA(y,z)$ by the cost of rounding a tip. All metric statements below are therefore made on $\widehat\Omega$, and we write $\dA(x,y)$ for points of $\overline\Omega$ only when they are off $\C_{\mathrm{reg}}$ or when a lip has been specified.
\end{definition}

\begin{remark}[Continuity across the crack]\label{rem:slit}
The value function satisfies $u(y)\le u(x)+\dA(x,y)$, hence it is $1$-Lipschitz with respect to $\dA$. It is, however, discontinuous across the crack in the Euclidean topology in general: the two sides of a regular crack point can be joined only by paths circumventing the tips, so the two one-sided traces may differ, and do in general. They need not: for $\C=[-1,1]\times\{0\}$ in $B(0,3)$ with $x^*=(2,0)$ the two traces both equal $2-t$ at $(t,0)$, so $u$ happens to be Euclidean-continuous across that crack. Neither trace dominates the other in general; which of the two is the smaller may even change along a single sheet, as the example of Remark~\ref{rem:lips} shows, and the set where they agree is the locus at which both faces are reached at the same cost. The natural topology for the problem is therefore that of the completion $\widehat\Omega$ of $\Omega\setminus\C$ with respect to the length metric (the classical \emph{slit-domain completion}, in which each regular crack point is doubled into two lips $\C^\pm$, one for each side ($s_\C\to0^\mp$); for each regular crack point $x\in\C_{\mathrm{reg}}$ the two one-sided traces are well-defined (since $u$ is $1$-Lipschitz with respect to $\dA$, and two points in the same half $U\cap\{s_\C<0\}$ or $U\cap\{s_\C>0\}$ of a small tubular neighborhood $U$ of $x$ can be joined inside that half by a path of length comparable to their distance, $u$ is Lipschitz, hence uniformly continuous, on each half): $u_\pm(x):=\lim_{y\to x,\,\pm s_\C(y)<0}u(y)$, one for each lip. At a tip $p\in\partial\C$ the two sides merge and only a single trace exists; there $u$ is continuous in the Euclidean topology). All continuity and semicontinuity statements below are understood in $\widehat\Omega$; on $\overline\Omega\setminus\C$ the two topologies agree.
\end{remark}

\begin{remark}[Admissibility is liftability]\label{rem:lift}
Condition~(ii) of Definition~\ref{def:admissible} has a purely topological reading: a Lipschitz curve $\gamma:[0,T]\to\overline\Omega$ satisfies it if and only if it admits a continuous lift $\hat\gamma:[0,T]\to\widehat\Omega$ with $\pi\circ\hat\gamma=\gamma$, where $\pi:\widehat\Omega\to\overline\Omega$ is the natural projection.
Indeed, a curve that touches a regular point from one side and leaves to the same side, or that runs along $\C$, lifts continuously to the corresponding lip, whereas a transversal crossing makes the lift jump between the two lips; a curve rounding a tip lifts continuously, the two lips meeting there in $\widehat\Omega$; and at a junction the components of $(B_r(y)\cap\Omega)\setminus\C$ are separated in $\widehat\Omega$, so remaining in the closure of one of them is exactly continuity of the lift.
Condition~(i) is then automatic at every \emph{interior} time of differentiability, rather than merely almost everywhere. Liftability places $\gamma$ in $\overline{K(\hat\gamma(t))}$ on a whole interval around $t$, so for $t\in(0,T)$ every difference quotient $(\gamma(t+h)-\gamma(t))/h$, of either sign of $h$, points into that set, and the limit $\dot\gamma(t)$ lies in its tangent cone $T(\hat\gamma(t))$ --- indeed in $T\cap(-T)$, which at a lip forces tangency, the statement Theorem~\ref{thm:tangential} will recover from optimality alone. At the two endpoints only one sign of $h$ is available and the conclusion is correspondingly one-sided: at $t=T$ one gets $\dot\gamma(T)\in-T(\hat\gamma(T))$, i.e.\ $\dot\gamma(T)\in\mathbb R_+A^{\mathrm{arr}}(\hat\gamma(T))$, and~(i) itself may fail there. It must: the trajectory $t\mapsto(0,1-t)$ with $\C=[-1,1]\times\{0\}$ is admissible and reaches the crack at $t=1$ with $\dot\gamma(1)=-n_\C\notin T$, which is exactly the transversal arrival at an endpoint announced in the introduction. This is why condition~(i) of Definition~\ref{def:admissible} is imposed almost everywhere and why the dynamic programming principle is phrased through $A^{\mathrm{arr}}$ and not $A$. The argument also covers the times spent on $\partial\Omega$ and on $\C_{\mathrm{sing}}$, which an argument through the signed distance would not, and it is where it matters that~(i) is stated with the cone rather than with $A$: a liftable curve may pause, and $\dot\gamma=0$ lies in every tangent cone but in no $A(\hat x)$. Admissibility and liftability therefore coincide.

It follows that $\dA$ is the length metric of $\widehat\Omega$, but this deserves a word, being the hinge of everything below. By definition $\dA$ is an infimum over admissible curves in $\overline\Omega$, which may \emph{run along} $\C$ and $\partial\Omega$, whereas the length metric of the open set $\Omega\setminus\C$ is an infimum over curves avoiding them and is a priori strictly larger; the completion only adds limit points. The two agree because an admissible curve can be pushed off the obstacle at vanishing cost in length: for a sub-arc running along a lip, the push-off $\gamma+\varepsilon\,n_\C\circ\gamma$ lies in $\Omega\setminus\C$ for small $\varepsilon$ and is Lipschitz with length $(1+O(\varepsilon))\,\mathrm{length}(\gamma)$, because $n_\C$ is Lipschitz on a $C^{1,1}$ sheet; near a tip or a junction one first pushes radially off the singular set, whose contribution to the length is $O(\varepsilon)$ by transversality, and similarly along $\partial\Omega$. This is a second place where $C^1$ would not do: for $\C$ the graph of a function with $f'(t)=t\sin(1/t)$ --- a $C^1$ arc --- the normal $n_\C$ has unbounded variation near $0$ and $\gamma+\varepsilon n_\C$ need not even be rectifiable.

Two properties of this metric space are used repeatedly and are recorded here. First, each connected component of $\widehat\Omega$ is complete and totally bounded, hence a compact length space. It is complete by construction, being a completion, and it is a length space because $\dA$ is defined as an infimum of lengths. For total boundedness, note that $\overline\Omega$ is a bounded quasiconvex $C^{1,1}$ domain and $\C$ is a finite union of compact sheets, so $\dA$ is bounded on each component and, away from a tubular neighborhood of $\C$ of radius $\rho$, is comparable to the Euclidean metric; a finite $\varepsilon$-net off that neighborhood together with a finite net on each lip --- the lips being totally bounded in the intrinsic metric of the sheets, which is all that is needed and which holds since $\C_{\mathrm{reg}}$, though not closed, is a subset of the compact set $\C$ --- covers the component. We stress that $\dA$ is \emph{not} comparable to the Euclidean metric up to $\C$: two points on opposite lips may be Euclidean-close and $\dA$-far, which is precisely the phenomenon exploited in Lemma~\ref{lem:DPP}. Second, $\dA$ takes the value $+\infty$ exactly between distinct components, so all metric statements are to be read componentwise. In particular each component is complete and locally compact, and the Hopf--Rinow--Cohn-Vossen theorem applies to it: any two of its points are joined by a minimizing geodesic. With this,
\[
u=\operatorname{dist}_{\widehat\Omega}(x^*,\cdot).
\]
This identification is what makes the two-sided model well posed without any labelling of the two sides of $\C$, and we use it repeatedly below. It should be said plainly that the identification itself is elementary; the substance of the Hamilton--Jacobi formulation lies in the local structure it forces on each lip --- the free boundary, the tangency and no-reflection laws, the tip condition --- which is the subject of Section~\ref{sec:freeboundary}.
\end{remark}

\medskip

In the absence of the crack, the function $u$ is characterized as the unique viscosity solution of the Eikonal equation
\[
|\nabla u| = 1.
\]
In the present setting, the crack imposes directional constraints that modify this characterization. The value function $u$ satisfies a Hamilton--Jacobi equation in the bulk together with a variational inequality on $\C$.

More precisely, we consider the following system on the slit completion:
\begin{equation}
\begin{cases}
|\nabla u(x)| = 1 & \text{in } (\Omega \setminus \C)\cup\partial\C, \\[6pt]
\displaystyle\sup_{d\in A^{\mathrm{arr}}(x)} d\cdot\nabla u(x) = 1 & \text{on } \partial\Omega\setminus\C, \\[6pt]
\displaystyle\sup_{d\in A^{\mathrm{arr}}(x)} d\cdot\nabla u(x) = 1 & \text{on each lip and each junction copy,} \\[6pt]
u(x^*) = 0,
\end{cases}
\label{eq:HJVI}
\end{equation}
where $A^{\mathrm{arr}}$ is the arrival set \eqref{eq:Aarr}; the three lines above the source condition are one and the same requirement $\sup_{d\in A^{\mathrm{arr}}}d\cdot\nabla u=1$, which at an interior bulk point or a tip reduces to $|\nabla u|=1$, the arrival set being the full unit sphere there, and which on $\partial\Omega$ and at a junction copy is the supremum over the arrival directions available inside the component occupied. The domain boundary is thus a state constraint of the same kind as a lip, and is treated as one throughout. Thus \eqref{eq:HJVI} prescribes a condition at every point of $\widehat\Omega$, and it is in this single-line form that Definitions~\ref{def:sub} and~\ref{def:super} render it.

In \eqref{eq:VI} below, $\nabla_\tau u$ denotes the tangential gradient along $\C$ and $\partial_{\nC} u$ the normal derivative on a lip.
The lip condition in \eqref{eq:HJVI} is the state-constrained form of the eikonal equation: it encodes both the non-penetration constraint and the possibility of tangential propagation.
Where the constraint binds it takes the form of a variational inequality,
\begin{equation}\label{eq:VI}
\max\big(|\nabla_\tau u(x)| - 1,\ -\partial_{\nC} u(x)\big) = 0 ,
\end{equation}
whose two arguments are the tangential speed and the normal derivative.
A caution is in order about this form, and we give it here rather than let it mislead. For the value function the two arguments vanish \emph{together}: by Lemma~\ref{lem:normalsign}, $|\taugrad u|^2+(\nderiv u)^2=1$ at every lip point of differentiability, so $|\taugrad u|=1$ if and only if $\nderiv u=0$. The complementarity in \eqref{eq:VI} is therefore degenerate --- it is not a genuine either/or --- and for $u$ it reduces to the single scalar statement that the crack is reached tangentially. The information is carried by the state-constraint condition on the lip, of which \eqref{eq:VI} is only the binding-regime shadow. That \eqref{eq:VI} holds exactly on the binding part of each lip, and fails as an equation where the crack is reached transversally, is the content of Section~\ref{sec:freeboundary}.

This formulation naturally leads to the emergence of a free boundary on the crack, separating regions where trajectories only touch the crack from those where they propagate along it. The analysis of this structure and its implications for optimal trajectories is the main focus of this work.

The crack is thus an obstacle that may be touched and followed but not traversed, and the pair of lips at a regular point records the two sides separately.
Whether one lip is cheaper to reach than the other is a property of the configuration, not a modelling choice, and it is this that the free-boundary analysis of Section~\ref{sec:freeboundary} describes.

\section{Viscosity Solutions}\label{sec:viscosity}

In this section, we define viscosity solutions for the Hamilton--Jacobi variational inequality \eqref{eq:HJVI}.

\subsection{Preliminaries}

For $x \in \C$, we denote by $T_x \C$ the tangent space to $\C$ at $x$, and define the tangential gradient of a smooth function $\varphi$ by
\[
\nabla_\tau \varphi(x) = \nabla \varphi(x) - (\nabla \varphi(x)\cdot n_\C(x)) n_\C(x),
\]
where $n_\C(x)$ is a unit normal vector to $\C$. The normal derivative is given by
\[
\partial_{n_\C} \varphi(x) = \nabla \varphi(x)\cdot n_\C(x).
\]

Each $C^1$ sheet of $\C$ carries the Riemannian metric induced by the Euclidean structure of $\mathbb R^n$. For a function $f\in C^1(\C)$ living on the crack we write $\nabla_{\!\C} f$ for its \emph{intrinsic gradient} (the Riemannian gradient of the induced metric, a tangent vector in $T_x\C$), and we call the associated geodesic distance the \emph{intrinsic distance} on $\C$; a curve in $\C$ that is locally length-minimising for this metric is an \emph{intrinsic geodesic}. When $f=\varphi|_\C$ is the trace of an ambient $C^1$ function $\varphi$, the intrinsic gradient coincides with the tangential gradient defined above,
\[
\nabla_{\!\C}(\varphi|_\C)=\nabla_\tau\varphi,
\qquad\text{so in particular}\qquad
|\nabla_{\!\C}(\varphi|_\C)|=|\nabla_\tau\varphi| .
\]
Accordingly, the \emph{intrinsic eikonal equation} on $\C$ is $|\nabla_{\!\C} f|=1$, and, \emph{for a locally Lipschitz $f$}, being $1$-Lipschitz for the intrinsic distance is equivalent to $|\nabla_{\!\C}f|\le1$ at every point of differentiability. The Lipschitz hypothesis cannot be dropped in the direction from the gradient bound to the Lipschitz bound --- the Cantor function has vanishing derivative wherever it is differentiable and is not Lipschitz --- and it is satisfied by $u|_\C$, locally: an arc contained in a single connected component of $\C_{\mathrm{reg}}$ is itself an admissible curve along either lip, so $u$ restricted to a lip is $1$-Lipschitz for the intrinsic distance of \emph{that component}, as soon as $u$ is $1$-Lipschitz for $\dA$, which it is by concatenation of admissible curves. The restriction to one component is not pedantry: for the cross $\C=\bigl([-1,1]\times\{0\}\bigr)\cup\bigl(\{0\}\times[-1,1]\bigr)$ in $B(0,2)$, the points $a=(-\tfrac12,0)$ and $b=(\tfrac12,0)$ on the \emph{same} upper lip of the horizontal bar are at intrinsic distance $1$ in $\C$, whereas $\dA(a,b)=2\sqrt{5}/2=2.2361$, the junction at the origin forcing a detour around the tip $(0,1)$; with $x^*=(-\tfrac12,\tfrac1{10})$ one gets $|u(a)-u(b)|=2.0476$, so the global claim is false by a factor of two. Since differentiability is a local matter, the componentwise statement is all that is used; where $u$ is differentiable up to a lip the bound $|\taugrad u|\le1$ also follows directly from $|\nabla u|=1$ (Lemma~\ref{lem:normalsign}).

\subsection{Definition of viscosity solutions}

Because the crack is two-sided, the two lips over a regular crack point have to be tested separately, and we therefore pose the viscosity theory on the slit completion $\widehat\Omega$ of Remark~\ref{rem:slit} rather than on $\overline\Omega$.
Let $\pi:\widehat\Omega\to\overline\Omega$ be the natural projection; it is a homeomorphism over $\overline\Omega\setminus\C$, two-to-one over $\C_{\mathrm{reg}}$, one-to-one over $\partial\C$, and $k$-to-one over a junction around which $\Omega\setminus\C$ has $k$ local components.
Test functions remain ambient, $\varphi\in C^1(\overline\Omega)$, and are compared with $u$ through $\pi$: a local extremum of $u-\varphi\circ\pi$ is taken in $\widehat\Omega$, so that at a lip only the values of $u$ on the side of that lip enter the comparison.
At a lip $\hat x_0$ lying over $x_0\in\C_{\mathrm{reg}}$ we write $\nabla\varphi(\hat x_0):=\nabla\varphi(x_0)$, we let $\nC(\hat x_0)$ be the unit normal pointing to the far side of that lip, and we form $A^{\mathrm{arr}}(\hat x_0)$ from this normal as in~\eqref{eq:Aarr}.
Over $\overline\Omega\setminus\C$ nothing changes, and we drop the hats whenever no confusion can arise; this convention is in force for the whole of Section~\ref{sec:viscosity} and Section~\ref{sec:comparison}.
Posing everything on $\widehat\Omega$ keeps the two lips genuinely independent and removes any need to collapse them into a single function on $\overline\Omega$; in particular the comparison lemmas of Section~\ref{sec:comparison} are stated and proved on $\widehat\Omega$ throughout.

\begin{definition}[Viscosity subsolution]\label{def:sub}
A function $u\in\mathrm{USC}(\widehat\Omega)$ is a \emph{viscosity subsolution} of~\eqref{eq:HJVI} if for every test function $\varphi\in C^1(\overline\Omega)$ and every point $x_0\in\widehat\Omega\setminus\{x^*\}$ with $u(x_0)<\infty$ at which $u-\varphi$ attains a local maximum:
\begin{equation}\label{eq:sub-bellman}
\sup_{d\in A^{\mathrm{arr}}(x_0)} d\cdot\nabla\varphi(x_0)\ \le\ 1 .
\end{equation}
The two provisos match those of Definition~\ref{def:super} and are both needed. Finiteness: on a component of $\widehat\Omega$ not containing $x^*$ the value function is identically $+\infty$, so $u-\varphi\equiv+\infty$ there and \emph{every} point of that component is a local maximum; without the proviso $u$ would fail to be a subsolution whenever $\widehat\Omega$ is disconnected, as it is for the closed sheet $\C=\{|x|=1\}$ in $B(0,3)$ with $x^*=(0,2)$, where $\varphi(x)=2x_1$ and $x_0=0$ would demand $2\le1$. The source: at $x^*$ the condition is in any case vacuous, since $\dA(x^*,y)\le\varphi(y)-\varphi(x^*)$ near $x^*$ would force $d\cdot\nabla\varphi(x^*)\ge1$ for $d$ and $-d$ alike, so $u-\varphi$ has no local maximum there.
At a bulk point and at a tip the arrival set is the full unit sphere and \eqref{eq:sub-bellman} reads $|\nabla\varphi(x_0)|\le1$; at a junction copy it is the set of unit $d$ with $-d$ in the tangent cone of the component occupied; on a lip of $\C_{\mathrm{reg}}$ it reads $|\nabla\varphi(x_0)|\le1$ when $\nderiv\varphi(x_0)\ge0$ and $|\taugrad\varphi(x_0)|\le1$ when $\nderiv\varphi(x_0)\le0$, by the computation of Remark~\ref{rem:strongsuper}.
We emphasise that the stronger max-form inequality $\max(|\taugrad\varphi|-1,-\nderiv\varphi)\le0$ is \emph{not} imposed: it would force a sign on the normal derivative of every test function, which no subsolution need satisfy. It does hold for the value function at its points of differentiability, by Lemma~\ref{lem:normalsign}.
\end{definition}

The supersolution condition requires more care: the ``obvious'' dual of the subsolution is the wrong condition, so we motivate \eqref{eq:super-bellman} before stating it.

\emph{The naive dual is empty.} By analogy with the subsolution one might expect the reversed max-form $\max\!\bigl(|\taugrad\varphi|-1,-\nderiv\varphi\bigr)\ge0$. This is far too weak: it holds for \emph{every} test function with $\nderiv\varphi\le0$ --- in particular for constants, for which $\nabla\varphi=0$ --- so it carries no information at crack points and cannot yield a comparison principle. A usable supersolution condition must be genuinely sharper on $\C$.

\emph{What the condition must deliver.} Two things. It must be \emph{satisfied by the value function}, which is where the choice between the two half-spheres is decided, as Remark~\ref{rem:strongsuper} shows; and it must be strong enough to drive the comparison Lemma~\ref{lem:superopt}. The condition we adopt is metric: it requires $v$ to decrease at unit rate in some direction. The Bellman inequality below then follows from it (Proposition~\ref{prop:metric-implies-bellman}), so nothing of the differential formulation is lost, while the comparison argument gets the one statement it can use at a crack tip, where --- as we explain after the definition --- no $C^1$ test function need touch a supersolution from below at all. Remark~\ref{rem:weakclass} records what each of the two forms contributes.

\emph{The condition is the pointwise form of the dynamic programming principle.}
Let $\varphi\in C^1(\overline\Omega)$ touch $u$ from below at $x_0$. Inserting the lower bound of the dynamic programming principle (Lemma~\ref{lem:DPP}), namely $u(x_0)\ge u(x_0-hd_h)+h+o(h)$ for some $d_h\in A^{\mathrm{arr}}(x_0)$, using $u\ge\varphi$ with equality at $x_0$, expanding
$\varphi(x_0)-\varphi(x_0-hd_h)=hd_h\cdot\nabla\varphi(x_0)+o(h)$, and letting
$h\to0^+$, gives precisely $\sup_{d\in A^{\mathrm{arr}}(x_0)}
d\cdot\nabla\varphi(x_0)\ge1$. Thus \eqref{eq:super-bellman} is not an ad hoc inequality but the viscosity form of the DPP lower bound --- the supersolution condition for the control (Bellman) Hamiltonian
\[
H(x,p)=\sup_{d\in A^{\mathrm{arr}}(x)}d\cdot p-1,
\]
which is the crack analogue of the state-constraint boundary condition of Soner
\cite{Soner1986}.

\emph{Why the arrival set, and not the velocity set.} The dynamic programming principle is a \emph{backward} statement: it involves the directions from which $x_0$ can be \emph{reached}. If $x_0\in\C$ is reached along an admissible curve, its predecessor $x_0-hd$ must lie on the near side, $s_\C(x_0-hd)\le0$, that is $d\cdot n_\C\ge0$. These are the arrival directions $A^{\mathrm{arr}}(x_0)$ of~\eqref{eq:Aarr}, \emph{not} the velocity set $A(x_0)=\{|d|=1,d\cdot n_\C\le0\}$, which constrains \emph{forward} departure from the crack. Conflating the two reverses the operative half-sphere on $\C$ and produces a sign error there; in the bulk the distinction is invisible, since both sets are the full unit sphere and \eqref{eq:super-bellman} reduces to the classical $|\nabla\varphi|\ge1$. That the arrival choice is the correct one --- of the two half-spheres it is the one for which the value function satisfies \eqref{eq:super-bellman} at every crack point, the velocity-set supremum dropping below $1$ at the illuminated points --- is verified by the explicit computation in Remark~\ref{rem:strongsuper}.

\begin{definition}[Viscosity supersolution]\label{def:super}
A function $v\in\mathrm{LSC}(\widehat\Omega)$ is a \emph{viscosity supersolution} of~\eqref{eq:HJVI} if for every $x_0\in\widehat\Omega\setminus\{x^*\}$ with $v(x_0)<\infty$ there is $r_0=r_0(x_0)>0$ such that
\begin{equation}\label{eq:super-metric}
\inf\bigl\{v(\hat z):\dA(x_0,\hat z)=r\bigr\}\ \le\ v(x_0)-r
\qquad\text{for every } r\in(0,r_0) .
\end{equation}
\end{definition}

Three comments on this definition, which is metric rather than differential.

\emph{It is the dynamic programming principle.} Condition \eqref{eq:super-metric} says that $v$ decreases at unit rate in \emph{some} direction. The value function satisfies it with equality: if $\gamma$ is a minimizing geodesic from $x^*$ to $x_0$ and $r<u(x_0)$, the point of $\gamma$ at distance $r$ before $x_0$ lies on the sphere and has $u=u(x_0)-r$. The threshold $r_0$ must be allowed to depend on $x_0$ --- for the value function $r_0(x_0)=u(x_0)$, which degenerates as $x_0\to x^*$ --- and the condition is imposed only where $v$ is finite, since on a component of $\widehat\Omega$ not containing $x^*$ the value function is identically $+\infty$ and nothing is being asserted. The spheres appearing in \eqref{eq:super-metric} are non-empty for small $r$: each component of $\widehat\Omega$ is a geodesic space of positive diameter, a lip always having its non-empty near side in the same component by the nondegeneracy recorded in Section~\ref{sec:formulation}.

\emph{It implies the Bellman inequality.} The differential form of the condition is a consequence, not an extra requirement.

\begin{proposition}\label{prop:metric-implies-bellman}
Let $v$ be a viscosity supersolution in the sense of Definition~\ref{def:super}. Then for every $\varphi\in C^1(\overline\Omega)$ and every $x_0\in\widehat\Omega\setminus\{x^*\}$ with $v(x_0)<\infty$ at which $v-\varphi$ attains a local minimum,
\begin{equation}\label{eq:super-bellman}
\sup_{d\in A^{\mathrm{arr}}(x_0)} d\cdot\nabla\varphi(x_0)\ \ge\ 1 .
\end{equation}
\end{proposition}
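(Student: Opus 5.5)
The plan is to apply the metric condition \eqref{eq:super-metric} at $x_0$, pick near-minimizers on small $\dA$-spheres, and read off a direction in $A^{\mathrm{arr}}(x_0)$ along which $\varphi$ increases at unit rate toward $x_0$.

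Fix $\varphi$ and $x_0$ as in the statement. For small $r\in(0,r_0)$, use \eqref{eq:super-metric} to choose $\hat z_r$ with $\dA(x_0,\hat z_r)=r$ and $v(\hat z_r)\le v(x_0)-r+r^2$. Since the local minimum of $v-\varphi$ is taken in $\widehat\Omega$ and $\hat z_r\to x_0$ in $\widehat\Omega$, for small $r$ we have $v(\hat z_r)-\varphi(z_r)\ge v(x_0)-\varphi(x_0)$, where $z_r=\pi(\hat z_r)$. Hence
\[
\varphi(x_0)-\varphi(z_r)\ \ge\ v(x_0)-v(\hat z_r)\ \ge\ r-r^2 .
\]
Write $z_r=x_0-|x_0-z_r|\,d_r$ with $|d_r|=1$; note $|x_0-z_r|\le\dA(x_0,\hat z_r)=r$. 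A first-order expansion of $\varphi$ gives
\[
r-r^2\le |x_0-z_r|\,d_r\cdot\nabla\varphi(x_0)+o(r)\le r\,\bigl(d_r\cdot\nabla\varphi(x_0)\bigr)_+ +o(r).
\]
Dividing by $r$, we get $\liminf_{r\to0^+} d_r\cdot\nabla\varphi(x_0)\ge1$. Passing to a subsequence, $d_r\to d$ with $|d|=1$ and $d\cdot\nabla\varphi(x_0)\ge1$. The same inequality also shows $|x_0-z_r|/r\to1$, since $|\nabla\varphi(x_0)|$ is finite and the left side is close to $r$; we only need $|x_0-z_r|>0$ for small $r$.

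The remaining and main step is to show $d\in A^{\mathrm{arr}}(x_0)$, that is, $-d\in T(\hat x_0)$. Here we use the local structure of $\widehat\Omega$. For small $r$, a point $\hat z_r$ at $\dA$-distance $r$ from $\hat x_0$ projects into $\overline{K(\hat x_0)}$. On a lip this holds because a short admissible curve from $\hat x_0$ stays in a tubular neighborhood, and by liftability (Remark~\ref{rem:lift}) it stays on the near side. At a junction copy it stays in the closure of the occupied component, by clause (ii). At tips and bulk points $T(\hat x_0)$ is everything, and on $\partial\Omega$ the curve stays in $\overline\Omega$. Thus $z_r-x_0=-|x_0-z_r|d_r$ lies in $\overline{K(\hat x_0)}-x_0$ with $|x_0-z_r|\to0$. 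By definition of the tangent cone, the limit $-d$ of the normalized vectors lies in $T(\hat x_0)$. Hence $d\in A^{\mathrm{arr}}(x_0)$, and \eqref{eq:super-bellman} follows.

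The expected obstacle is exactly this localization: checking that $\dA$-small neighborhoods of a lip or junction copy project into $\overline{K(\hat x_0)}$ (the Euclidean-close points on the opposite lip are $\dA$-far). This needs a uniform tubular neighborhood, available by Remark~\ref{rem:c11}. A secondary care point is ensuring $v(\hat z_r)$ is finite and the local-minimum neighborhood is respected, which follows from $\hat z_r\to x_0$ in $\widehat\Omega$.
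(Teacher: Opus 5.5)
Your proof is correct and follows the paper's argument essentially line for line. The shared steps are: near-minimizers $\hat z_r$ on the $\dA$-spheres with $v(\hat z_r)\le v(x_0)-r+r^2$, a first-order expansion using $|x_0-\pi\hat z_r|\le r$, and the observation that $\pi\hat z_r\in\overline{K(x_0)}$ for small $r$, which places the limit direction in $A^{\mathrm{arr}}(x_0)$. One small slip: your displayed inequality yields only $|x_0-z_r|\ge r(1-o(1))/|\nabla\varphi(x_0)|$, not $|x_0-z_r|/r\to1$, but as you note only positivity of $|x_0-z_r|$ is needed.
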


\begin{proof}
Normalize so that $v\ge\varphi\circ\pi$ near $x_0$ with equality at $x_0$. Fix small $r>0$ and pick, by \eqref{eq:super-metric}, a point $\hat z_r$ with $\dA(x_0,\hat z_r)=r$ and $v(\hat z_r)\le v(x_0)-r+r^2$. Then
\[
\varphi(\pi\hat z_r)\ \le\ v(\hat z_r)\ \le\ \varphi(x_0)-r+r^2 .
\]
Write $\varrho_r:=|\pi\hat z_r-x_0|\le\dA(x_0,\hat z_r)=r$. We cannot have $\varrho_r=0$: that would force $\varphi(\pi\hat z_r)=\varphi(x_0)$ and hence $0\le-r+r^2$, false for small $r$. So $d_r:=(x_0-\pi\hat z_r)/\varrho_r$ is a unit vector, and a Taylor expansion of $\varphi$ gives
\[
\varrho_r\,d_r\cdot\nabla\varphi(x_0)\ \ge\ r-r^2+o(\varrho_r),
\qquad\text{whence}\qquad
d_r\cdot\nabla\varphi(x_0)\ \ge\ 1-r+o(1) ,
\]
using $\varrho_r\le r$. Moreover $\hat z_r$ lies in $\overline{K(x_0)}$ for $r$ small, so $\pi\hat z_r-x_0$ is within $o(\varrho_r)$ of the tangent cone $T(x_0)$, i.e. $\operatorname{dist}(d_r,A^{\mathrm{arr}}(x_0))\to0$. Passing to a subsequence $d_r\to d\in A^{\mathrm{arr}}(x_0)$, which is compact, gives $d\cdot\nabla\varphi(x_0)\ge1$.
\end{proof}

\emph{The converse is not available, which is why the metric form is taken as the definition.} At a crack tip the competitor that the comparison argument of Lemma~\ref{lem:superopt} needs to touch a supersolution from below admits no $C^1$ minorant, so the differential condition says nothing about it there. The obstruction is the shadow. Let $\C=[-1,0]\times\{0\}$ in the plane, $\hat x=(0,1)$, and let $p=(0,0)$ be the tip; then $\dA(\hat x,\cdot)$ equals $1+|\cdot-p|$ along the shadow lip behind $p$, while it equals $\sqrt{1+t^2}$ at distance $t$ from $p$ along the tangent line beyond the tip, where the crack has ended. So it increases \emph{conically} at $p$ and has no supergradient there. Consequently $\phi(\hat y):=c-\lambda\,\dA(\hat x,\hat y)^2$ admits no $C^1$ minorant touching it at $p$: writing $q$ for the gradient of a candidate, the direction $+e_1$ beyond the tip gives $\phi(p+te_1)-\phi(p)=-\lambda t^2$, hence $q\cdot e_1\le0$, while the direction $-e_1$ along the shadow lip gives $\phi(p-te_1)-\phi(p)=-\lambda(2t+t^2)$, hence $q\cdot e_1\ge2\lambda>0$. The two are incompatible. Separate conditions at the singular set are the rule rather than the exception for Hamilton--Jacobi equations on networks and stratified sets \cite{AchdouCamilliCutriTchou2013, BarlesBrianiChasseigne2013, ImbertMonneau2017}; here the metric form \eqref{eq:super-metric} discharges that role at every point at once, and metric formulations of this kind are standard for Hamilton--Jacobi equations on length spaces. The subsolution condition, by contrast, is unproblematic in test-function form and we keep it there.

\begin{remark}\label{rem:strongsuper}
We record the explicit form of \eqref{eq:super-bellman} promised above.

\emph{Bulk and tips.}
At a bulk point and at a tip, $A^{\mathrm{arr}}(x_0)$ is the full unit sphere, so $\sup_{d\in A^{\mathrm{arr}}} d\cdot\nabla\varphi = |\nabla\varphi(x_0)|$ and the condition is simply $|\nabla\varphi(x_0)|\ge1$. At a junction copy the supremum is over the unit vectors of $-T(x_0)$, and the condition is correspondingly stronger; nothing below depends on its explicit form there.

\emph{Regular crack points.}
At $x_0\in\C_{\mathrm{reg}}$, $A^{\mathrm{arr}}(x_0)=\{|d|=1,\,d\cdot\nC(x_0)\ge0\}$ is the hemisphere of directions pointing to the far side. Decompose $\nabla\varphi=\taugrad\varphi+(\nderiv\varphi)\,\nC$ into tangential and normal parts. Maximising $d\cdot\nabla\varphi$ over this hemisphere:
\begin{itemize}
\item If $\nderiv\varphi(x_0)\ge0$: the full gradient $\nabla\varphi$ itself points into the hemisphere (its normal component has the allowed sign), so the maximiser is $d=\nabla\varphi/|\nabla\varphi|$ and
$\sup = |\nabla\varphi(x_0)|$.
\item If $\nderiv\varphi(x_0)\le0$: the gradient points away from the hemisphere, so $d$ can only ``catch'' the tangential projection; the maximiser lies on the equator $\{d\cdot\nC=0\}$, giving $\sup = |\taugrad\varphi(x_0)|$.
\end{itemize}
In summary,
\begin{equation}\label{eq:sup-explicit}
\sup_{d\in A^{\mathrm{arr}}(x_0)} d\cdot\nabla\varphi(x_0)=
\begin{cases}
|\nabla\varphi(x_0)| & \text{if }\nderiv\varphi(x_0)\ge0,\\[4pt]
|\taugrad\varphi(x_0)| & \text{if }\nderiv\varphi(x_0)\le0.
\end{cases}
\end{equation}

\emph{Both cases carry the eikonal inequality.}
Since $A^{\mathrm{arr}}(x_0)$ consists of unit vectors, $\sup_{d\in A^{\mathrm{arr}}(x_0)}d\cdot\nabla\varphi(x_0)\le|\nabla\varphi(x_0)|$ always, so \eqref{eq:super-bellman} implies
\[
|\nabla\varphi(x_0)|\ge1
\qquad\text{at every point, crack points included:}
\]
directly in the first case of~\eqref{eq:sup-explicit}, and in the second because $|\nabla\varphi|\ge|\taugrad\varphi|\ge1$ there.
Conversely, \eqref{eq:super-bellman} is strictly \emph{stronger} than $|\nabla\varphi|\ge1$ on a lip: when $\nderiv\varphi(x_0)<0$ it demands $|\taugrad\varphi(x_0)|\ge1$, which the bulk eikonal inequality does not give.
This asymmetry is the state-constraint content of the lip condition, and no sign hypothesis on $\nderiv\varphi$ is needed to extract it.

\emph{Which case is operative for the value function.}
The first one: by Lemma~\ref{lem:normalsign}, at every lip point with $0<u<\infty$ at which $u$ is differentiable up to the lip one has $\nderiv u\ge0$ together with $|\nabla u|=1$, illuminated points included. Writing $\theta\in[0,\pi/2]$ for the angle between $\nabla u(x_0)$ and the tangent space $T_{x_0}\C$, so that $\nderiv u=\cos\theta$ and $|\taugrad u|=\sin\theta$, the two regimes of Section~\ref{sec:freeboundary} are the cases $\theta=\pi/2$ (sliding) and $\theta<\pi/2$ (illuminated).
We stress that this is a property of the value function, not a requirement placed on test functions: the crack being a two-sided barrier, the values of $u$ beyond a lip belong to the other lip and are unrelated to those on it, so no sign can be imposed on the normal derivative of a competitor at the level of Definition~\ref{def:super}.
Had the velocity set $A(x_0)=\{d\cdot\nC\le0\}$ been used in place of $A^{\mathrm{arr}}(x_0)$, the supremum at an illuminated point would equal $|\taugrad u|=\sin\theta<1$, and the value function would fail to be a supersolution; this is the sign error that the arrival set removes.
Step~2 of the proof of Theorem~\ref{thm:main} verifies that the value function satisfies \eqref{eq:super-bellman} at every point of $\widehat\Omega\setminus\{x^*\}$ at which it is finite. The source must be excluded: $\varphi\equiv0$ touches $u$ from below at $x^*$ with $\nabla\varphi(x^*)=0$, so \eqref{eq:super-bellman} fails there, as it must --- it is the failure of the supersolution condition at $x^*$ that pins the solution to the prescribed source, and it is what forces the descent polygon of Lemma~\ref{lem:superopt} to terminate there.
\end{remark}

\begin{definition}[Viscosity solution]
$u$ is a \emph{viscosity solution} of~\eqref{eq:HJVI} if it is a subsolution on $\widehat\Omega$, a supersolution on $\widehat\Omega\setminus\{x^*\}$, continuous on $\widehat\Omega$ in the sense of Remark~\ref{rem:slit}, and $u(x^*)=0$.
\end{definition}

\textbf{Remarks:} On each lip the crack condition encodes two competing mechanisms, which the value function realizes as follows:

\begin{itemize}
\item $\partial_{n_\C} u \ge 0$: the value function does not decrease toward the crack along the normal of the lip, which is the trace of non-penetration --- a trajectory gains nothing by pressing against the barrier.

\item $|\nabla_\tau u| \le 1$: propagation along the crack occurs at speed at most one, with equality exactly where sliding is optimal.
\end{itemize}

Both are properties of the value function and both follow from Lemma~\ref{lem:normalsign}: that lemma gives $\nderiv u\ge0$ directly, and $|\taugrad u|\le1$ because $|\taugrad u|^2=1-(\nderiv u)^2$. Neither is a condition imposed on test functions (Remark~\ref{rem:strongsuper}). Where they bind they do so simultaneously, by Lemma~\ref{lem:normalsign}, and combine into
\[
\max\big(|\nabla_\tau u| - 1,-\partial_{n_\C} u\big) = 0 ,
\]
which on the \emph{binding} (sliding) part of a lip asserts $\nderiv u=0$ and $|\taugrad u|=1$ at once --- equivalent conditions there, not alternatives. On the illuminated part, by contrast, both arguments of the max are strictly negative and the pointwise identity $\max(\cdot)=0$ does not hold; the crack condition there is the state constraint $\nderiv u\ge0$, $|\taugrad u|\le1$ without complementarity (see Remark~\ref{rem:reconcile}). This naturally leads to the emergence of a free boundary separating the two regimes.

\section{Main Results}

In this section, we establish the well\-posedness of the Hamilton--Jacobi variational inequality \eqref{eq:HJVI} and characterize the associated optimal trajectories.

\subsection{Existence and uniqueness}

We can now state the main well-posedness result.

\begin{theorem}\label{thm:main}
Assume that:
\begin{itemize}
\item $\Omega \subset \mathbb{R}^n$ is bounded with $C^1$ boundary,
\item $\C \subset \overline{\Omega}$ satisfies the standing hypotheses of Section~\ref{sec:formulation}: a finite union of compact $C^1$ sheets, meeting one another and $\partial\Omega$ transversally,
\item the source satisfies $x^*\in\Omega\setminus\C$.
\end{itemize}
No \emph{upper} semicontinuity of the set-valued maps \eqref{eq:A}--\eqref{eq:Aarr} is assumed, and none is available: both jump down at a lip, the full sphere at nearby bulk points collapsing to a hemisphere on the lip. (They are lower semicontinuous, which is not what a closed-graph argument would need.) The proofs below never need it --- this is one of the dividends of working in $\widehat\Omega$, where the descent of Lemma~\ref{lem:superopt} follows geodesics rather than a direction field.

Then the value function
\[
u(x)=\inf_{\gamma}\int_0^T |\dot{\gamma}(t)|dt,
\]
where the infimum is taken over all admissible trajectories in the sense of Definition~\ref{def:admissible}, is the unique viscosity solution of \eqref{eq:HJVI} in the class of functions that are continuous on $\widehat\Omega$, locally Lipschitz in $\overline\Omega\setminus\C$, and vanish at $x^*$. It is $1$-Lipschitz with respect to the admissible path metric $\dA$; in particular it is continuous on $\overline\Omega\setminus\C$ and continuous on the slit completion $\widehat\Omega$. If $\widehat\Omega$ is disconnected --- as happens when the crack closes around part of the domain --- the statement is to be read in the connected component of $x^*$, $u$ being identically $+\infty$ on the others by Remark~\ref{rem:lift}.
\end{theorem}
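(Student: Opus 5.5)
The proof splits into four steps: the Lipschitz and continuity properties of $u$, the subsolution property, the supersolution property, and uniqueness by a two-sided comparison. Throughout we use the identification $u=\operatorname{dist}_{\widehat\Omega}(x^*,\cdot)$ of Remark~\ref{rem:lift}, which turns almost every claim into a statement about a compact length space.

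\emph{Step 0 (regularity).} Concatenating admissible curves on $\widehat\Omega$ gives $u(\hat y)\le u(\hat x)+\dA(\hat x,\hat y)$, and $\dA$ is symmetric. Hence $u$ is $1$-Lipschitz for $\dA$, and so continuous on $\widehat\Omega$. Off $\C$ the metric $\dA$ is locally comparable to the Euclidean metric, which gives local Lipschitz continuity on $\overline\Omega\setminus\C$. On components not containing $x^*$ we have $u\equiv+\infty$, and all claims are read componentwise.

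\emph{Step 1 (subsolution).} Let $u-\varphi\circ\pi$ have a local maximum at $x_0\ne x^*$ with $u(x_0)<\infty$, and fix $d\in A^{\mathrm{arr}}(x_0)$. Then $-d$ lies in the tangent cone $T(x_0)$ of $\overline{K(x_0)}$. Because that cone has non-empty interior and is the closure of it, we can pick $d'$ arbitrarily close to $d$ with $x_0-hd'$ in the same piece $K(x_0)$ for small $h$. The straight segment from $x_0-hd'$ to $x_0$ is then admissible and lifts into the correct lip or junction copy. This gives $u(x_0)\le u(x_0-hd')+h$, and so
\[
\varphi(x_0)-\varphi(x_0-hd')\le h .
\]
Dividing by $h$ and letting $h\to0$ and then $d'\to d$ yields $d\cdot\nabla\varphi(x_0)\le1$. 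The only care needed is at junction copies and on $\partial\Omega$, where transversality supplies the interior directions.

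\emph{Step 2 (supersolution).} This is immediate from the metric definition. By Hopf--Rinow--Cohn-Vossen, a minimizing geodesic joins $x^*$ to $x_0$. For $r<u(x_0)$, the point of this geodesic at distance $r$ before $x_0$ satisfies $\dA(x_0,\hat z)=r$ and $u(\hat z)=u(x_0)-r$. Thus \eqref{eq:super-metric} holds with $r_0=u(x_0)$, and Proposition~\ref{prop:metric-implies-bellman} then supplies the Bellman form. Continuity is Step 0, and $u(x^*)=0$ holds trivially.

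\emph{Step 3 (uniqueness).} Let $w$ be another solution in the stated class. We prove $w\le u$ and $v\ge u$ for every supersolution $v$ with $v(x^*)=0$ (respectively every subsolution $w$ with $w(x^*)=0$), then apply both to $w$.

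For $w\le u$, the plan is to show that a subsolution is $1$-Lipschitz along admissible curves. On the open bulk this is the classical fact that $|\nabla w|\le1$ in the viscosity sense implies $w$ is $1$-Lipschitz on segments. To reach the crack we approximate an admissible curve by push-offs into $\Omega\setminus\C$, as in Remark~\ref{rem:lift}, at vanishing cost in length, and then use continuity of $w$ on $\widehat\Omega$ to pass to the limit. This gives $w(\hat x)\le w(x^*)+\dA(x^*,\hat x)=u(\hat x)$.

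For $v\ge u$, I would use a descent polygon, as announced for Lemma~\ref{lem:superopt}. Starting from $x_0$, repeatedly apply \eqref{eq:super-metric} to get points $\hat z_k$ with
\[
v(\hat z_{k+1})\le v(\hat z_k)-r_k+\varepsilon r_k,\qquad \dA(\hat z_k,\hat z_{k+1})=r_k .
\]
Summing these inequalities shows that $v$ decreases by at least $(1-\varepsilon)$ times the $\dA$-length travelled. Since $v$ is bounded below and each component is compact, the total length is finite and the polygon converges. Choosing steps near the supremum of the admissible radii, the limit point must be $x^*$: at any other point with finite value, \eqref{eq:super-metric} would allow the descent to continue. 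Hence $v(x_0)\ge v(x^*)+(1-\varepsilon)\dA(x^*,x_0)$, and letting $\varepsilon\to0$ gives $v\ge u$.

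\textbf{Main obstacle.} The termination argument is the hard part, because $r_0(x)$ is not uniform and could shrink along the sequence. To handle it I would use a Zorn/transfinite argument, maximal over the reachable set, combined with lower semicontinuity of $v$. Concretely, take the infimum of $v(\hat z)+(1-\varepsilon)\dA(x_0,\hat z)$-type functionals, or apply Ekeland's variational principle on the compact component. The goal is to produce a point at which \eqref{eq:super-metric} is violated unless that point is $x^*$.
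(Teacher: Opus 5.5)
Your outline is correct and keeps the paper's four-step architecture. Step 3 as first written has a gap, the one you flag yourself, but the fallback you name closes it. Two steps take genuinely different routes from the paper.

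\textbf{Step 1.} You let $h\to0$ at a fixed direction $d'$ with $-d'$ interior to $T(x_0)$, and only afterwards let $d'\to d$. The paper instead builds, in Lemma~\ref{lem:DPP}(a), a curve $\sigma_d$ running along the obstacle with $\pi\sigma_d(h)=x_0-hd+O(h^2)$, so as to have the dynamic-programming inequality for boundary directions at fixed $h$. For the viscosity inequality this is unnecessary. The bound $d'\cdot\nabla\varphi(x_0)\le1$ holds on a dense subset of $A^{\mathrm{arr}}(x_0)$ and passes to the closure by continuity. The paper's objection, that one cannot fix $h$ and let $k\to\infty$, does not touch your order of limits. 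Your version is shorter and does not use the curvature bound behind the $O(h^2)$. One precision: at a tip every direction is interior to $T(p)=\mathbb R^n$, so choose $d'$ outside the tangent cone of the sheet at $p$; this guarantees that the segment actually avoids $\C$ near $p$.

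\textbf{Step 3, superoptimality.} Your countable descent polygon does not close as written, for the reason you give. The admissible radius $r_0(\cdot)$ of Definition~\ref{def:super} has no uniform lower bound near a non-source point. So the steps can shrink to $0$ and the polygon can converge to some $z_\infty\ne x^*$ at which nothing is contradicted. Your fallback closes the gap and makes the polygon superfluous.

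Fix $\lambda\in(0,1)$ and minimize $G:=v+\lambda\,\dA(x_0,\cdot)$ over the compact component of $x_0$. Since $G$ is lower semicontinuous, a minimizer $\bar z$ exists, and $v(\bar z)\le G(x_0)=v(x_0)<\infty$. Suppose $\bar z\ne x^*$. Then \eqref{eq:super-metric} at $\bar z$ gives, for small $r$ and every $\eta>0$, a point $\hat z$ with $\dA(\bar z,\hat z)=r$ and $v(\hat z)\le v(\bar z)-r+\eta$. The triangle inequality then yields
\[
G(\hat z)\le v(\bar z)-r+\eta+\lambda\bigl(\dA(x_0,\bar z)+r\bigr)=G(\bar z)-(1-\lambda)r+\eta ,
\]
which is $<G(\bar z)$ once $\eta<(1-\lambda)r$, a contradiction. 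Hence $\bar z=x^*$. This shows that $x_0$ lies in the component of $x^*$, and that
\[
v(x_0)=G(x_0)\ge G(x^*)\ge\lambda\,\dA(x_0,x^*)=\lambda\,u(x_0).
\]
Letting $\lambda\to1$ gives $v\ge u$.

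This is all of Lemma~\ref{lem:superopt}. It uses the same ingredients as the paper: compactness of the component, lower semicontinuity, the triangle inequality, and \eqref{eq:super-metric} at a single point. It replaces the inf-convolution, the slope bound \eqref{eq:slope} and the geodesic polygon by one minimization. What the paper's construction buys in exchange is an explicit admissible trajectory from $x_0$ toward $x^*$ along which $v$ decreases at nearly unit rate.

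\textbf{Step 3, suboptimality.} You push whole near-minimizing curves into $\Omega\setminus\C$; this is a global version of Lemma~\ref{lem:subopt}. It relies on the radial push-off at tips asserted in Remark~\ref{rem:lift}. The paper's local proof avoids that by splitting the path at a bulk point beyond the tip.
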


We first establish the Dynamic Programming Principle (DPP), which is the analytical engine driving both the sub- and supersolution verifications.

\begin{lemma}\label{lem:DPP}
Let $x\in\widehat\Omega$ with $0<u(x)<\infty$.
\begin{enumerate}
\item[\emph{(a)}] \emph{(Upper bound.)} For every $d\in A^{\mathrm{arr}}(x)$ there is a unit-speed admissible curve $\sigma_d:[0,h_0]\to\widehat\Omega$ with $\sigma_d(0)=x$ and
\begin{equation}\label{eq:DPPcurve}
\pi\sigma_d(h)=x-hd+O(h^2)\qquad(h\to0^+),
\end{equation}
such that
\begin{equation}\label{eq:DPPupper}
u(x)\ \le\ u\bigl(\sigma_d(h)\bigr)+h
\qquad (0<h\le h_0),
\end{equation}
with no error term. When $-d$ lies in the interior of $T(x)$ one may take $\sigma_d(h)=x-hd$ and \eqref{eq:DPPcurve} is an identity.
\item[\emph{(b)}] \emph{(Lower bound.)} There are $\rho_h\in(0,h]$ with $\rho_h\to0$ and unit vectors $d_h$ with $\operatorname{dist}\bigl(d_h,A^{\mathrm{arr}}(x)\bigr)\to0$ such that
\begin{equation}\label{eq:DPPlower}
u(x)\ \ge\ u(x-\rho_h d_h)+\rho_h .
\end{equation}
In particular every limit point of $(d_h)$ lies in the closed set $A^{\mathrm{arr}}(x)$.
\end{enumerate}
If $u(x)=+\infty$ both statements are vacuous, $u$ being then identically $+\infty$ on the component of $x$ in $\widehat\Omega$.
\end{lemma}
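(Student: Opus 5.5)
Both parts rest on the identification $u=\operatorname{dist}_{\widehat\Omega}(x^*,\cdot)$ of Remark~\ref{rem:lift}, so that $u$ is $1$-Lipschitz for $\dA$, and on the existence of minimizing geodesics in each component of $\widehat\Omega$ (Hopf--Rinow--Cohn-Vossen, also from Remark~\ref{rem:lift}). The upper bound (a) is the triangle inequality applied along a short admissible curve. The lower bound (b) reads off the geodesic realizing $u(x)$.

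For (a), fix $d\in A^{\mathrm{arr}}(x)$, so $-d\in T(x)$. If $-d$ lies in the interior of $T(x)$, I would use transversality, which gives $T(x)$ non-empty interior and makes $\overline{K(x)}$ contain a small truncated cone around $-d$. Then the segment $h\mapsto x-hd$ stays in $\overline{K(x)}$ for $h\le h_0$. It lifts continuously to $\widehat\Omega$ starting at the lift $x$, so it is admissible by Remark~\ref{rem:lift}, and it has length $h$. Hence $\dA(x,\sigma_d(h))\le h$, and since $u$ is $1$-Lipschitz for $\dA$ we get \eqref{eq:DPPupper} with no error term.

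For boundary directions $-d\in\partial T(x)$ I would split by type of point. At a lip, $d$ is tangent to the $C^{1,1}$ sheet; there I take $\sigma_d$ to be the unit-speed intrinsic geodesic (or any $C^{1,1}$ curve in the sheet) with initial velocity $-d$. This runs along the lip, so it is admissible, and bounded curvature gives $\pi\sigma_d(h)=x-hd+O(h^2)$. At $\partial\Omega$ the same construction works with the $C^{1,1}$ boundary in place of the sheet. At a junction copy, transversality makes the tangent cone locally the cone of a sector bounded by finitely many $C^{1,1}$ sheets. I would then follow the bounding sheet, or their transversal intersection, in the direction $-d$, again with $O(h^2)$ deviation. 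In every case the length equals $h$ because of unit speed, and \eqref{eq:DPPupper} follows as before. I expect this junction/corner construction to be the main technical point: one must check that a curve with the prescribed tangent and $O(h^2)$ error stays in the closure of the occupied component. Transversality together with the $C^{1,1}$ bounds on the sheets is what I would invoke for this.

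For (b), let $\gamma$ be a unit-speed minimizing geodesic in $\widehat\Omega$ from $x^*$ to $x$, with $L=u(x)\in(0,\infty)$. For $0<h<L$ set $y_h=\gamma(L-h)$. Then $u(y_h)=L-h$ exactly, since sub-arcs of minimizers are minimizers. Put $\rho_h:=|x-\pi y_h|\le h$. This is positive for small $h$, because $\pi$ is locally injective near each lift and $\dA(y_h,x)=h>0$ (if needed, choose $h$ with $\pi y_h\ne x$). Set $d_h=(x-\pi y_h)/\rho_h$. Then $u(x)=u(y_h)+h\ge u(x-\rho_hd_h)+\rho_h$, with $x-\rho_hd_h$ read on the lift $y_h$. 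Finally, $y_h\in\overline{K(x)}$ for small $h$, so $\pi y_h-x$ lies within $o(\rho_h)$ of $T(x)$ by the definition of the tangent cone. Hence $\operatorname{dist}(d_h,A^{\mathrm{arr}}(x))\to0$, and compactness gives the statement about limit points. The vacuous case $u(x)=+\infty$ is immediate from Remark~\ref{rem:lift}.
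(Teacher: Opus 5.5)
Your proposal is correct and follows the paper's proof essentially step for step. In (a) it uses a segment for interior directions and a $C^{1,1}$ curve along the bounding sheet, $\partial\Omega$ or junction face for boundary directions; in (b) it uses the chord from $x$ to $\gamma(L-h)$ on a minimizing geodesic, with $\rho_h>0$ because the other lifts of $\pi x$ lie at positive $\dA$-distance from $x$. Two small corrections are needed. First, $\pi$ is not locally injective at a tip $p$, since every neighborhood of $p$ in $\widehat\Omega$ contains both lips over nearby crack points, but what you actually need survives because the fiber over $p$ is $\{p\}$. Second, at a tip the inclusion of the segment in $\overline{K(p)}$ gives no liftability, so the direction tangent to the sheet (which the paper treats separately) is best handled by a curve along one lip, exactly as in your lip case.
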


\begin{proof}
Throughout, trajectories are parametrized by arc length, so that $|\dot\gamma|=1$ a.e.\ and the cost of a trajectory equals its length. We work in $\widehat\Omega$ and write $x$ for $\hat x$, as agreed.

\medskip
\noindent\textbf{(a) Upper bound.}
Suppose first that $-d$ lies in the \emph{interior} of the tangent cone $T(x)$ of \eqref{eq:A}. Then for $h$ small the segment
\[
\eta:[0,h]\to\widehat\Omega,\qquad \eta(s):=x-hd+sd
\]
lies in $\overline{K(x)}$ and meets $\C$ only at its endpoint $x$; being a continuous curve in $\widehat\Omega$ it is admissible, by Remark~\ref{rem:lift}, and has length $h$. Concatenating an admissible curve from $x^*$ to $x-hd$ of cost at most $u(x-hd)+\delta$ with $\eta$ and letting $\delta\to0$ gives $u(x)\le u(x-hd)+h$, with no error term at all; so $\sigma_d(h)=x-hd$ serves.

For $-d$ on the \emph{boundary} of $T(x)$ the straight segment is unavailable, and it is worth being explicit about why, since the point is easy to miss. That $T(x)$ is the closure of its interior does not make a boundary direction realizable by a segment: approximating $-d$ by interior directions $-d^{(k)}$ gives $u(x)\le u(x-hd^{(k)})+h$ only for $h\le h_k$ with $h_k\to0$, so one cannot fix $h$ and let $k\to\infty$. Nor is the limit point admissible. Take $\C$ an arc of the unit circle with two tips, $x_0$ a point of its \emph{inner} lip, and $d=\tau$ a unit tangent, so $\tau\cdot\nC=0$ and $\tau\in A^{\mathrm{arr}}(x_0)$; then $x_0-h\tau$ lies strictly \emph{outside} the disk, that is, on the far side, where no lip determined by $K(x_0)$ exists at all. For $\Omega=B_{3.5}(0)$, the arc $\{|\theta|\le170^\circ\}$ and $x^*=(3,0)$ we computed $u(x_0)=6.56$ at the inner lip point $(1,0)$ while $u((1,-h))=\sqrt{4+h^2}\to2$, so the chord form of the inequality fails by $4.55$ and the discrepancy \emph{grows} as $h\to0$. The same happens at a point of $\partial\Omega\setminus\C$ with $\partial\Omega$ curved, where $x-hd$ leaves $\overline\Omega$ and $u(x-hd)$ is not even defined.

The remedy is to move along the obstacle instead of along its tangent. By transversality $\overline{K(x)}$ has at $x$ a tangent cone equal to the closure of its interior, and its boundary consists of pieces of $\C$ and of $\partial\Omega$, all of class $C^{1,1}$. Hence every $-d\in T(x)$ is realized by a $C^{1,1}$ curve in $\overline{K(x)}$ issuing from $x$ with initial velocity $-d$: for $-d$ interior take the segment; for $-d$ tangent to the boundary piece through $x$ take the geodesic of that piece --- on a lip, the arc of $\C$ along that lip, which lies in $\overline{K(x)}$ and is admissible along it by Remark~\ref{rem:lift}; for $-d$ on the boundary of a junction sector take the corresponding sheet. Reparametrized by arc length this is $\sigma_d$, and $C^{1,1}$ regularity gives the second-order expansion \eqref{eq:DPPcurve}, the $O(h^2)$ being controlled by the curvature bound of the piece. Concatenating an admissible curve from $x^*$ to $\sigma_d(h)$ with the reversal of $\sigma_d|_{[0,h]}$, which has length exactly $h$, gives \eqref{eq:DPPupper}. At a tip $x=p$ the set $\overline{K(p)}$ is a whole half-neighborhood of $p$ in $\overline\Omega$, $T(p)=\mathbb R^n$, and segments serve for every $d$; when $-d$ is tangent to the sheet the segment runs along $\C$ and is admissible along either lip, so \eqref{eq:DPPupper} then holds with $u(\sigma_d(h))$ read on either one.

\medskip
\noindent\textbf{(b) Lower bound.}
The slit completion $\widehat\Omega$ is a compact length space with intrinsic metric $\dA$ (Remark~\ref{rem:lift}), hence complete and locally compact; by the Hopf--Rinow--Cohn-Vossen theorem any two of its points at finite distance are joined by a minimizing geodesic. Since $u(x)=\dA(x^*,x)<\infty$, let $\gamma:[0,T]\to\widehat\Omega$ be a unit-speed minimizing geodesic from $x^*$ to $x$, $T=u(x)>0$.

For $0<h<T$ put $x_h:=\gamma(T-h)$, $\rho_h:=|x-x_h|$ and $d_h:=(x-x_h)/\rho_h$. We claim $\rho_h>0$. A minimizing geodesic is injective, so $x_h\ne x$ \emph{in $\widehat\Omega$}; but $\pi$ is not injective, so this alone does not give $\rho_h>0$. Suppose $\rho_h=0$, i.e.\ $\pi(x_h)=\pi(x)$ with $x_h\ne x$: the two then lie over the same point of $\C$ on different lips or in different junction components, and the sub-arc $\gamma|_{[T-h,T]}$ is a path in $\widehat\Omega$ joining them. Any such path must leave the tubular neighborhood of that crack point and round a tip or circumnavigate a junction, so its length is bounded below by a positive constant $c=c(x)$; this constant degenerates as $x$ approaches a tip, but $x$ is fixed here, and for $h<c(x)$ the situation is impossible. Hence $\rho_h>0$, and $\rho_h\le h\to0$. The restriction $\gamma|_{[0,T-h]}$ is admissible from $x^*$ to $x_h$ and has length $T-h$, so $u(x_h)\le u(x)-h$ and, as $x_h=x-\rho_hd_h$ and $\rho_h\le h$,
\[
u(x)\ \ge\ u(x-\rho_hd_h)+h\ \ge\ u(x-\rho_hd_h)+\rho_h ,
\]
which is \eqref{eq:DPPlower}. It remains to see that $d_h$ approaches $A^{\mathrm{arr}}(x)$. The point $x_h$ lies in $\overline{K(x)}$ for $h$ small, so $-\rho_hd_h=x_h-x$ is at distance $o(\rho_h)$ from the tangent cone $T(x)$, whence $\operatorname{dist}(-d_h,T(x))\to0$ and $\operatorname{dist}(d_h,A^{\mathrm{arr}}(x))\to0$. On a lip this is the familiar computation
\[
\nC(x)\cdot(x-x_h)=s_\C(x)-s_\C(x_h)+o(\rho_h)\ \ge\ o(\rho_h),
\qquad\text{i.e.}\qquad d_h\cdot\nC(x)\ge-o(1) .
\]
Finally, $A^{\mathrm{arr}}(x)$ being compact, every limit point of $(d_h)$ belongs to it.

\medskip
Statement \eqref{eq:DPPlower} is exact --- it carries no error term --- because the step is measured by the chord $\rho_h$ rather than by the arc length $h$. This is the only form of the dynamic programming principle used below, and it is what makes the verification of the sub- and supersolution properties free of any hypothesis on the regularity of optimal trajectories; compare the classical statements in Bardi--Capuzzo-Dolcetta \cite{BardiCapuzzoDolcetta1997}.
\end{proof}

The dynamic programming principle already settles the sign of the normal derivative of the value function, a fact used repeatedly below and, in the literature on one-sided crack models, usually postulated.

\begin{lemma}[Normal sign at a lip]\label{lem:normalsign}
Let $x_0\in\C_{\mathrm{reg}}$ with $0<u(x_0)<\infty$, and suppose $u$ is differentiable at $x_0$ up to one of the two lips, with $\nC$ the normal of that lip. Then
\[
|\nabla u(x_0)|=1
\qquad\text{and}\qquad
\nderiv u(x_0)\ \ge\ 0 .
\]
\end{lemma}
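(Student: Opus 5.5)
The plan is to read off both conclusions from the two halves of the dynamic programming principle, Lemma~\ref{lem:DPP}, applied at the lip point $x_0$. "Differentiable up to the lip" will mean that there is a vector $p=\nabla u(x_0)$ with
\[
u(\hat z)=u(x_0)+p\cdot(\pi\hat z-x_0)+o(|\pi\hat z-x_0|)
\]
for $\hat z$ on the near side of that lip (and on the lip itself). Every point produced by Lemma~\ref{lem:DPP} at $x_0$ lies in $\overline{K(x_0)}$, so this expansion applies to all of them. The steps are: upper bound $|p|\le1$, lower bound $|p|\ge1$, and then the sign $\nderiv u(x_0)\ge0$.

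\emph{Step 1: $|p|\le1$ from part (a).} Fix $d\in A^{\mathrm{arr}}(x_0)$, that is, $d\cdot\nC\ge0$. By \eqref{eq:DPPupper}, $u(x_0)\le u(\sigma_d(h))+h$, where $\pi\sigma_d(h)=x_0-hd+O(h^2)$. Expanding gives $hd\cdot p\le h+o(h)$, so $d\cdot p\le1$ for every $d$ in the closed arrival hemisphere. If $p\cdot\nC\ge0$, then $d=p/|p|$ is admissible and $|p|\le1$ follows at once. If $p\cdot\nC<0$, this only gives $|\taugrad u|\le1$. In that case I would use part (a) with directions $d$ on the \emph{departure} side, applied at nearby bulk points $x_0-\epsilon\nC$: there $A^{\mathrm{arr}}$ is the full sphere, and the straight segment from $x_0-\epsilon\nC-hd$ is available for $h\ll\epsilon$. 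Pushing the expansion through, with the uniform $o(\cdot)$ supplied by differentiability up to the lip, gives $d\cdot p\le1$ for all unit $d$, hence $|p|\le1$. This extension off the lip is the step I expect to need the most care: the estimate has to be taken at points of the near side and then passed to the limit, and this is exactly where the one-sided notion of differentiability is used.

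\emph{Step 2: $|p|\ge1$ and the sign, from part (b).} Part (b) gives $\rho_h\to0$ and unit vectors $d_h$ with
\[
u(x_0)\ge u(x_0-\rho_hd_h)+\rho_h .
\]
Expanding yields $\rho_h\,d_h\cdot p\ge\rho_h+o(\rho_h)$, hence $d_h\cdot p\ge1-o(1)$. Passing to a subsequence, $d_h\to d\in A^{\mathrm{arr}}(x_0)$ with $d\cdot p\ge1$. Since $|d|=1$, this forces $|p|\ge1$, and together with Step 1, $|p|=1$. Now the inequality $d\cdot p\ge1=|p|$ between unit vectors forces equality in Cauchy--Schwarz, so $d=p$. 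Since $d\in A^{\mathrm{arr}}(x_0)$, we get $p\cdot\nC=d\cdot\nC\ge0$, which is $\nderiv u(x_0)\ge0$.

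The main obstacle is therefore Step 1 in the case $p\cdot\nC<0$. The lower-bound argument of Step 2 needs only $|p|\le1$ as input, and the lip alone yields only the tangential bound there. As a fallback I would argue directly from the $1$-Lipschitz property of $u$ with respect to $\dA$ (Remark~\ref{rem:slit}). Near $x_0$ on the near side, $\dA$ coincides with the Euclidean distance up to $o(\cdot)$: the near half of a thin tubular neighborhood is locally convex to first order, because the sheet is $C^{1,1}$. Applying the Lipschitz bound between $x_0$ and $x_0+t e$ for any $e$ pointing into $T(x_0)$ then gives $|p\cdot e|\le1$. Since $T(x_0)$ has nonempty interior, every unit vector $\pm e$ is covered, and $|p|\le1$ follows without invoking (a) at all. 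I would in fact present this Lipschitz argument as the primary route for Step 1 and keep the bulk-point argument only as an alternative.
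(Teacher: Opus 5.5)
Your proof is correct and, with the Lipschitz argument taken as the primary route for Step 1 as you propose, it is essentially the paper's proof. The bound $|\nabla u(x_0)|\le1$ comes from $1$-Lipschitzness of $u$ for $\dA$ together with $\dA(x_0,y)\le|y-x_0|(1+o(1))$ on the closed near side, and $|\nabla u(x_0)|\ge1$ and $\nderiv u(x_0)\ge0$ come from the DPP lower bound and equality in Cauchy--Schwarz, so the detour through part~(a) and bulk points is unnecessary; one small precision is that every direction is covered in Step 1 because $T(x_0)$ is a closed half-space at a lip point (so $e$ or $-e$ lies in it), not merely because it has nonempty interior.
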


\begin{proof}
First, $|\nabla u(x_0)|\le1$. This needs the comparison in the direction opposite to the obvious one: $1$-Lipschitzness for $\dA$ bounds $u$ by $\dA$, and $\dA\ge|\cdot|$ would be useless, so what is required is $\dA(x_0,y)\le|y-x_0|\,(1+o(1))$ for $y$ in the closed near side $\overline{K(x_0)}$. That estimate holds because the sheet is a $C^1$ graph near $x_0$: two points of $\overline{K(x_0)}$ at Euclidean distance $\varrho$ are joined inside $\overline{K(x_0)}$ by a path of length $\varrho(1+o(1))$ as $\varrho\to0$. Combining, $|u(y)-u(x_0)|\le|y-x_0|(1+o(1))$ on the near side, whence $|\nabla u(x_0)|\le1$.
By the lower bound \eqref{eq:DPPlower} of Lemma~\ref{lem:DPP} there are $\rho_h\downarrow0$ and unit $d_h$ approaching $A^{\mathrm{arr}}(x_0)$ with $u(x_0)\ge u(x_0-\rho_hd_h)+\rho_h$. The predecessor $x_0-\rho_hd_h$ lies in $\overline{K(x_0)}$, so differentiability of $u$ up to the lip applies to it and gives
\[
\rho_h\,d_h\cdot\nabla u(x_0)+o(\rho_h)\ \ge\ \rho_h ,
\qquad\text{i.e.}\qquad d_h\cdot\nabla u(x_0)\ge1+o(1) .
\]
Passing to a subsequence $d_h\to d$, which lies in the compact set $A^{\mathrm{arr}}(x_0)$ by Lemma~\ref{lem:DPP}, we obtain
\[
d\cdot\nabla u(x_0)\ge1 .
\]
As $|d|=1$ this forces $|\nabla u(x_0)|\ge1$, hence $|\nabla u(x_0)|=1$ and equality in Cauchy--Schwarz, i.e. $d=\nabla u(x_0)$. Since $d\in A^{\mathrm{arr}}(x_0)$ satisfies $d\cdot\nC(x_0)\ge0$, we get $\nderiv u(x_0)=\nabla u(x_0)\cdot\nC(x_0)\ge0$.
\end{proof}

Geometrically, $\nabla u(x_0)$ \emph{is} the terminal direction of an optimal trajectory, and an optimal trajectory reaches the lip from the lip's own side; the inequality $\nderiv u\ge0$ records nothing more than that. Writing $\theta$ for the angle between that terminal direction and $\C$, Lemma~\ref{lem:normalsign} reads $\nderiv u(x_0)=\cos\theta\ge0$ and $|\taugrad u(x_0)|=\sin\theta\le1$, which is the trichotomy of Section~\ref{sec:freeboundary} in germ.

\medskip

\begin{proof}[Proof of Theorem~\ref{thm:main}]
The proof is organized into four steps: the subsolution property, the supersolution property, uniqueness, and regularity.

\bigskip
\noindent
\textbf{Step 1: Subsolution property.}

Let $\varphi\in C^1(\overline{\Omega})$ and suppose that
$u-\varphi$ attains a local maximum at $x_0\in\widehat\Omega\setminus\{x^*\}$, with $u(x_0)<\infty$; both provisos are part of Definition~\ref{def:sub} and are discussed there. Without loss of generality, we may assume that
\[
(u-\varphi)(x_0)=0
\qquad\text{and}\qquad u-\varphi\le0
\]
in a neighborhood of $x_0$. Fix $d\in A^{\mathrm{arr}}(x_0)$ and let $\sigma_d$ be the admissible curve supplied by the upper bound \eqref{eq:DPPupper} of the dynamic programming principle, so that for every small $h>0$
\[
u(x_0) \le u\bigl(\sigma_d(h)\bigr)+h .
\]
Since $u-\varphi$ has a local maximum at $x_0$ and $\sigma_d(h)\to x_0$ in $\widehat\Omega$, we have
\[
u\bigl(\sigma_d(h)\bigr)-u(x_0) \le \varphi\bigl(\pi\sigma_d(h)\bigr)-\varphi(x_0)
\]
for all sufficiently small $h>0$. Substituting yields
\[
0 \le \varphi\bigl(\pi\sigma_d(h)\bigr)-\varphi(x_0)+h .
\]
By \eqref{eq:DPPcurve}, $\pi\sigma_d(h)=x_0-hd+O(h^2)$, so the Taylor expansion of $\varphi\in C^1$ gives
\[
\varphi\bigl(\pi\sigma_d(h)\bigr) = \varphi(x_0)-h\,d\cdot\nabla\varphi(x_0)+o(h) ,
\]
the $O(h^2)$ displacement contributing only $o(h)$. Dividing by $h$ and letting $h\to0^+$,
\begin{equation}\label{eq:sub-fund}
d\cdot\nabla\varphi(x_0) \le 1
\qquad\text{for every } d\in A^{\mathrm{arr}}(x_0).
\end{equation}
It is worth noting what this replaces: with the straight chord $x_0-hd$ in place of $\pi\sigma_d(h)$, the local-maximum inequality would be applied at a point that need not lie in $\overline{K(x_0)}$ at all --- on a lip of negative curvature it lies on the far side, at $\dA$-distance bounded away from zero --- so the tangential bound~(i) below would be unproved exactly where it is used. Moving along the obstacle costs only the $O(h^2)$ above.

\emph{Case $x_0\in\Omega\setminus \C$.}
Then $A^{\mathrm{arr}}(x_0)=\{d\in\mathbb{R}^n:|d|=1\}$; choosing $d=\nabla\varphi(x_0)/|\nabla\varphi(x_0)|$ in \eqref{eq:sub-fund} (the case $\nabla\varphi(x_0)=0$ being trivial) gives
\[
|\nabla\varphi(x_0)| \le 1.
\]

\emph{Case $x_0\in\partial\C$ (a tip).}
The two sides merge at a tip, so $T(x_0)=\mathbb R^n$ and $A^{\mathrm{arr}}(x_0)=\{|d|=1\}$, exactly as in the bulk case; \eqref{eq:sub-fund} gives $|\nabla\varphi(x_0)|\le1$.

\emph{Case $x_0\in\partial\Omega\setminus\C$.}
Here $T(x_0)$ is the inward half-space and $A^{\mathrm{arr}}(x_0)$ the outward closed half-sphere, so \eqref{eq:sub-bellman} is the supremum of $d\cdot\nabla\varphi(x_0)$ over that half-sphere and \eqref{eq:sub-fund} is already the required inequality; no computation is needed. As at a junction, the bound is weaker than $|\nabla\varphi(x_0)|\le1$, and must be: fewer arrival directions are available at a boundary point.

\emph{Case $x_0\in\C_{\mathrm{self}}$ (a junction copy).}
Here $A^{\mathrm{arr}}(x_0)$ consists of the unit $d$ with $-d$ in the tangent cone $T(x_0)$ of the component occupied, in general a proper subset of the sphere, and \eqref{eq:sub-bellman} is the corresponding supremum. The resulting bound is \emph{weaker} than $|\nabla\varphi(x_0)|\le1$, as it must be: a trajectory confined to one sector of a polygonal crack has fewer arrival directions available, so less is being asserted about $\nabla\varphi$. No further computation is needed, \eqref{eq:sub-fund} being already the required inequality.

\emph{Case $x_0\in\C_{\mathrm{reg}}$ (a lip).}
Taking the supremum over $d\in A^{\mathrm{arr}}(x_0)$ in \eqref{eq:sub-fund} is exactly \eqref{eq:sub-bellman}, so the required inequality holds; we record what it says componentwise.

\emph{(i) Tangential bound.}
Every unit tangent vector $\tau\in T_{x_0}\C$ satisfies $\tau\cdot\nC(x_0)=0$, hence both $\tau$ and $-\tau$ belong to $A^{\mathrm{arr}}(x_0)$.
Applying \eqref{eq:sub-fund} with $d=\pm\tau$ gives $|\tau\cdot\nabla\varphi(x_0)|\le1$, and since $\tau\cdot\nabla\varphi(x_0)=\tau\cdot\taugrad\varphi(x_0)$, taking the supremum over all unit tangents yields
\[
|\taugrad\varphi(x_0)|\le1.
\]

\emph{(ii) Normal bound.}
Taking $d=\nC(x_0)\in A^{\mathrm{arr}}(x_0)$ in \eqref{eq:sub-fund} gives
\[
\nderiv\varphi(x_0)\le1 ,
\]
a bound which, with (i), does \emph{not} yield $|\nabla\varphi(x_0)|\le1$. The correct route uses a third choice of $d$: by \eqref{eq:sup-explicit} the supremum in \eqref{eq:sub-bellman} equals $|\nabla\varphi(x_0)|$ when $\nderiv\varphi(x_0)\ge0$ and $|\taugrad\varphi(x_0)|$ otherwise; in the first case $d=\nabla\varphi(x_0)/|\nabla\varphi(x_0)|$ lies in $A^{\mathrm{arr}}(x_0)$ and \eqref{eq:sub-fund} applied to it gives $|\nabla\varphi(x_0)|\le1$, while in the second case (i) already gives $|\taugrad\varphi(x_0)|\le1$. No \emph{lower} bound on $\nderiv\varphi(x_0)$ is available --- nor is one needed. The sign condition $\nderiv\varphi\ge0$ of the max-form cannot be imposed here: the crack being a two-sided barrier, a test function is free beyond the lip, where the values of $u$ belong to the other lip and bear no relation to those on this one, so a competitor may well have $\nderiv\varphi(x_0)<0$. Definition~\ref{def:sub} is formulated precisely so as not to require it, and \eqref{eq:sub-bellman} then reads $|\taugrad\varphi(x_0)|\le1$ in that case, which is what (i) supplies.

Therefore $u$ is a viscosity subsolution of \eqref{eq:HJVI}. For the value function itself the normal sign does hold, by Lemma~\ref{lem:normalsign}, so that
\[
\max\Big( |\nabla_\tau u(x_0)|-1,-\partial_{n_\C} u(x_0) \Big) \le 0
\]
holds for $u$ wherever it is differentiable up to the lip --- but as a theorem about $u$, not as part of the definition of a subsolution.

\bigskip
\noindent
\textbf{Step 2: Supersolution property.}

Let $\varphi \in C^1(\overline\Omega)$ and suppose that $u-\varphi$ attains a local minimum at $x_0$, with $(u-\varphi)(x_0)=0$ and $u\ge\varphi$ near $x_0$.

By the lower bound \eqref{eq:DPPlower} of Lemma~\ref{lem:DPP} there are $\rho_h\downarrow0$ and unit vectors $d_h$ approaching $A^{\mathrm{arr}}(x_0)$ with
\[
u(x_0)\ge u(x_0-\rho_hd_h)+\rho_h .
\]
Since $u\ge\varphi$ near $x_0$ and $u(x_0)=\varphi(x_0)$,
\[
\varphi(x_0)\ge \varphi(x_0-\rho_hd_h)+\rho_h ,
\]
and the Taylor expansion $\varphi(x_0)-\varphi(x_0-\rho_hd_h)=\rho_h\,d_h\cdot\nabla\varphi(x_0)+o(\rho_h)$ gives
\[
d_h\cdot\nabla\varphi(x_0)\ge1+o(1).
\]
Passing to a subsequence $d_h\to d\in A^{\mathrm{arr}}(x_0)$ --- legitimate because $A^{\mathrm{arr}}(x_0)$ is compact and $\operatorname{dist}(d_h,A^{\mathrm{arr}}(x_0))\to0$ --- we get $d\cdot\nabla\varphi(x_0)\ge1$ and hence
\[
\sup_{d\in A^{\mathrm{arr}}(x_0)} d\cdot\nabla\varphi(x_0)\ge1 .
\]
This is precisely \eqref{eq:super-bellman}, valid at every $x_0\in\widehat\Omega\setminus\{x^*\}$ at which $u$ is finite --- that is, at every such point of the component of $x^*$, which is where Lemma~\ref{lem:DPP} is available. (On the other components $u\equiv+\infty$, so $u-\varphi\equiv+\infty$ there and every point is trivially both a local maximum and a local minimum; the comparisons of Definitions~\ref{def:sub} and~\ref{def:super} are therefore understood, as usual, on $\{u<\infty\}$.) Therefore $u$ is a viscosity supersolution of \eqref{eq:HJVI}. It remains to record that $u$ satisfies Definition~\ref{def:super} itself, which is immediate and was noted there: for $x_0$ with $0<u(x_0)<\infty$ and $r<u(x_0)$, the point of a minimizing geodesic from $x^*$ to $x_0$ at distance $r$ before $x_0$ realizes \eqref{eq:super-metric} with equality. On a component not containing $x^*$ the value function is identically $+\infty$ and nothing is required of it, by the finiteness proviso in Definition~\ref{def:super}.

\bigskip
\noindent
\textbf{Step 3: Uniqueness.}


Uniqueness follows from the two comparison lemmas below (Lemmas~\ref{lem:subopt} and~\ref{lem:superopt}). Let $w$ be any viscosity solution in the class of the theorem. Lemma~\ref{lem:subopt} gives $w\le u$ on $\widehat\Omega$, and Lemma~\ref{lem:superopt} --- applicable since $w$, being continuous on $\widehat\Omega$, is in particular lower semicontinuous there --- gives $w\ge u$ on $\widehat\Omega$. Hence $w\equiv u$. The value function belongs to this class: it is continuous on $\widehat\Omega$ and locally Lipschitz in $\overline\Omega\setminus\C$ by Remark~\ref{rem:slit}, and $u(x^*)=0$.

\bigskip
\noindent
\textbf{Step 4: Regularity.}

By definition of the value function and concatenation of admissible curves,
\[
u(\hat y)\le u(\hat x)+\dA(\hat x,\hat y)
\qquad\text{for all } \hat x,\hat y\in\widehat\Omega,
\]
so $u$ is $1$-Lipschitz with respect to the admissible path metric on $\widehat\Omega$. Since for $\hat x,\hat y$ in the closure of one local component $\overline{K(\hat x)}$ one has $\dA(\hat x,\hat y)\le |\hat x-\hat y|(1+o(1))$ as $|\hat x-\hat y|\to0$, by the $C^1$ regularity of the sheets and of $\partial\Omega$, the function $u$ is locally Lipschitz --- hence continuous --- on $\overline\Omega\setminus\C$, and continuous on the slit completion $\widehat\Omega$ in the sense of Remark~\ref{rem:slit}.
This completes the proof of Theorem~\ref{thm:main}.
\end{proof}

\subsection{Comparison lemmas}\label{sec:comparison}


We first establish the suboptimality principle.

\begin{lemma}\label{lem:subopt}
Let $w$ be a viscosity subsolution of \eqref{eq:HJVI} with $w(x^*)\le0$, continuous on $\widehat\Omega$ and locally Lipschitz in $\overline\Omega\setminus\C$. Then $w\le u$ on $\widehat\Omega$.
\end{lemma}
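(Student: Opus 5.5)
The plan is to show that $w$ is $1$-Lipschitz with respect to $\dA$ on $\widehat\Omega$. Granted this, $w(\hat x)\le w(x^*)+\dA(x^*,\hat x)\le u(\hat x)$ by Remark~\ref{rem:lift}. On components not containing $x^*$ there is nothing to prove, since $u\equiv+\infty$ there. The Lipschitz bound will come from a trajectory-based argument rather than doubling of variables. Fix an admissible unit-speed curve $\gamma:[0,T]\to\widehat\Omega$ from $x^*$ to $\hat x$; a minimizing geodesic suffices. The goal is to show that $g(t):=w(\gamma(t))$ satisfies $g(T)-g(0)\le T$.

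\emph{Step 1: bulk pieces.} On $\overline\Omega\setminus\C$, $w$ is locally Lipschitz. Hence it is differentiable a.e., and at points of differentiability it is touched from above by a $C^1$ function with the same gradient. Definition~\ref{def:sub} then gives $|\nabla w|\le1$ a.e. in $\Omega\setminus\C$. A locally Lipschitz function with a.e. gradient bound $1$ is $1$-Lipschitz along any segment avoiding a null set. I will pass from this to all segments by a Fubini perturbation of parallel segments, using continuity of $w$. This gives $|w(a)-w(b)|\le|a-b|$ for segments in $\Omega\setminus\C$. Concatenating, $g$ increases at most at unit rate on the open set of times where $\gamma\in\Omega\setminus\C$.

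\emph{Step 2: pushing off the obstacle.} Curves running along a lip, along $\partial\Omega$, or through $\C_{\mathrm{sing}}$ are handled with the push-off of Remark~\ref{rem:lift}. Replace $\gamma$ by $\gamma_\varepsilon\subset\Omega\setminus\C$ joining points $x^*_\varepsilon\to x^*$ and $\hat x_\varepsilon\to\hat x$ in $\widehat\Omega$, with length $(1+O(\varepsilon))T+O(\varepsilon)$. Approximate $\gamma_\varepsilon$ by polygonal curves in $\Omega\setminus\C$, which is possible because it is an open set. Step~1 then gives $w(\hat x_\varepsilon)\le w(x^*_\varepsilon)+(1+O(\varepsilon))T+O(\varepsilon)$. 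Letting $\varepsilon\to0$ and using continuity of $w$ on $\widehat\Omega$, which is exactly what makes the endpoint limits on a lip take the correct one-sided trace, yields $w(\hat x)\le w(x^*)+T$.

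\emph{Main obstacle.} The delicate point is that the subsolution condition at crack points is never actually invoked. This is legitimate only because $w$ is assumed continuous on $\widehat\Omega$, so the lip values of $w$ are limits of bulk values on the correct side. I must therefore check that the push-off converges to $\hat x$ in the $\widehat\Omega$ topology, that is, on the right lip. It is on the right lip because $\gamma_\varepsilon$ leaves $\gamma$ toward the near side $-n_\C$. The same check is needed at junction copies and at tips. Near tips and junctions the radial push-off must stay in a single component $K(\hat x)$; this is where transversality and the $C^{1,1}$ hypothesis of Remark~\ref{rem:c11} enter. A secondary technical issue is the Fubini argument in Step~1 near $\partial\Omega$, where segments may exit $\Omega$. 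I would handle it by working in compact subsets of $\Omega\setminus\C$, where $w$ is genuinely Lipschitz, which suffices because the approximating polygons $\gamma_\varepsilon$ stay at positive distance from $\C\cup\partial\Omega$.
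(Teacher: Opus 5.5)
Your proof is correct and follows essentially the paper's route: you show that $w$ is $1$-Lipschitz for $\dA$ using only the bulk consequence $|\nabla w|\le1$ of the subsolution property, pushing curves into $\Omega\setminus\C$ and recovering the lip values through continuity of $w$ on $\widehat\Omega$, so the subsolution condition at crack points is never invoked. The one difference is in how the push-off is organized. The paper localizes, using the intrinsic distances $d_V$ of one-sided neighborhoods, subdivision and local translation push-offs, and at tips and two-sheet junction sectors it splits the path at a bulk crossing point, since no single translation works there. You instead invoke the global radial push-off of Remark~\ref{rem:lift}, which is exactly the point you correctly flag as the one needing verification.
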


\begin{proof}
We show that $w$ is $1$-Lipschitz with respect to $\dA$ on $\widehat\Omega$:
\begin{equation}\label{eq:nonexp}
w(\hat y)-w(\hat x)\ \le\ \dA(\hat x,\hat y)
\qquad\text{for all }\hat x,\hat y\in\widehat\Omega .
\end{equation}
Granting this, $w(x)\le w(x^*)+\dA(x^*,x)\le u(x)$ for every $x$, which is the assertion.

Since $\dA$ is the \emph{length} metric of $\widehat\Omega$ (Remark~\ref{rem:lift}), it suffices to work locally. Precisely, suppose that every $\hat x\in\widehat\Omega$ has a neighborhood $V$ in $\widehat\Omega$ such that
\begin{equation}\label{eq:loclip}
|w(\hat y)-w(\hat z)|\le d_V(\hat y,\hat z)
\qquad\text{for all }\hat y,\hat z\in V ,
\end{equation}
$d_V$ denoting the intrinsic distance \emph{of $V$}. Given $\hat x,\hat y$ and $\delta>0$, join them by an admissible curve of length at most $\dA(\hat x,\hat y)+\delta$, subdivide it by points $p_0,\dots,p_N$ so that each consecutive pair lies in one such $V$, and note that $d_V(p_i,p_{i+1})$ is at most the length of the enclosed sub-arc. Adding the estimates \eqref{eq:loclip} gives $|w(\hat x)-w(\hat y)|\le\dA(\hat x,\hat y)+\delta$, and $\delta$ was arbitrary. Using $d_V$ rather than $\dA$ in \eqref{eq:loclip} is what makes the subdivision legitimate: a near-minimizer of $\dA$ between two points of $V$ need not stay in $V$. The subsolution property is used only through its bulk consequence: by Definition~\ref{def:sub} the inequality $|\nabla w|\le1$ holds in the viscosity sense on the open set $\Omega\setminus\C$, and $w$ being locally Lipschitz there, $|\nabla w|\le1$ a.e.\ in $\Omega\setminus\C$.

\emph{Interior bulk points.} If $\hat x\in\Omega\setminus\C$, take for $V$ a ball around $\hat x$ contained in $\Omega\setminus\C$. Integrating $|\nabla w|\le1$ along segments gives \eqref{eq:loclip}, with $d_V$ the Euclidean distance.

\emph{All other points.} Let $\hat x$ lie over $x_0\in\overline\Omega$, and let $V$ be the closure in $\widehat\Omega$ of the connected component of $(B_r(x_0)\cap\Omega)\setminus\C$ determined by $\hat x$, for $r$ small enough that $B_r(x_0)$ lies inside a chart in which each sheet through $x_0$ is a graph. This single case covers lips over $\C_{\mathrm{reg}}$, tips, junction copies, and points of $\partial\Omega$, whether or not they lie on $\C$: in each of them $\hat x\in V$ and $V$ is a neighborhood of $\hat x$ in $\widehat\Omega$.

\emph{One-sided sub-case.} Suppose first that $V$ touches $\C\cup\partial\Omega$ from one side only, i.e.\ that there is a single vector $\nu$ pointing into the component from every point of $V\cap(\C\cup\partial\Omega)$ --- at a point of $\C_{\mathrm{reg}}$ the normal of the chart hyperplane, at a junction or a boundary point a vector interior to the tangent cone. Let $\hat y,\hat z\in V$ and let $c\subset V$ be a path joining them of length at most $d_V(\hat y,\hat z)+\delta$. Push $c$ into the interior of the component: by transversality and the $C^{1,1}$ regularity of the sheets the translates $c_\varepsilon:=c+\varepsilon\nu$ lie in $\Omega\setminus\C$ for small $\varepsilon$, have the same endpoints up to $O(\varepsilon)$, and have length $(1+o(1))\operatorname{length}(c)$ with $o(1)\to0$. Integrating $|\nabla w|\le1$ along $c_\varepsilon$, which lies in the open set $\Omega\setminus\C$ where $w$ is locally Lipschitz,
\[
|w(\hat y+\varepsilon\nu)-w(\hat z+\varepsilon\nu)|\le(1+o(1))\operatorname{length}(c) .
\]
Letting $\varepsilon\to0$ and using the continuity of $w$ on $\widehat\Omega$ --- which is exactly what makes the two lips separately accessible --- gives $|w(\hat y)-w(\hat z)|\le d_V(\hat y,\hat z)+\delta$, and $\delta>0$ was arbitrary.

\emph{Two-sided sub-case: a tip.} At a tip $p$ the set $V$ is a slit ball carrying \emph{both} lips of the sheet, and no single $\nu$ will do: for $\hat y$ on the upper lip one needs $\nu$ to point up and for $\hat z$ on the lower lip one needs it to point down, while $\nu=\pm e_1$ along the sheet would place the translated lip point on $\C$ itself. (We checked that no admissible direction exists: for $\C=[-0.5,0]\times\{0\}$, $p=0$, and $\hat y,\hat z$ the two lips over $(-0.1,0)$, a scan of $20\,000$ directions leaves none.) One therefore splits instead of translating. Any path in $V$ joining the two lips must cross the ray $\{x_1>0,\,x_2=0\}$ beyond the tip, at a point $q$ which is a \emph{bulk} point of $V$; let $c$ be a near-optimal path and $q$ such a crossing. Each of the two sub-arcs $\hat y\to q$ and $q\to\hat z$ lies in a one-sided sub-domain of $V$ --- the upper and lower halves of the slit ball, each of which touches $\C$ from one side --- so the previous sub-case applies to each with $\nu=\pm e_2$, and adding the two estimates gives $|w(\hat y)-w(\hat z)|\le\operatorname{length}(c)\le d_V(\hat y,\hat z)+\delta$. The same splitting handles a junction copy whose sector is bounded by two sheets meeting at $x_0$: a crossing point in the relative interior of the sector reduces matters to two one-sided pieces.

In both sub-cases only the component under consideration enters; the trace on an adjacent one is never compared with it, and no statement about the dimension of $\C_{\mathrm{sing}}$ is needed.

This establishes \eqref{eq:loclip} in every case, and with it \eqref{eq:nonexp}.

\emph{Two consequences recorded for later use.} The argument shows in particular that $w$ is $1$-Lipschitz for the geodesic distance of each connected open subset of $\Omega\setminus\C$ \emph{(bulk estimate)}, and that each one-sided trace $w_\pm$ is $1$-Lipschitz on each connected component of $\C_{\mathrm{reg}}$ for the intrinsic distance of that component \emph{(crack estimate)}, the latter because such an arc is itself a path in $\widehat\Omega$ along the corresponding lip. The componentwise form is the correct one and is enough for what follows; across a junction it genuinely fails, as the cross-shaped crack of Section~\ref{sec:viscosity} shows. Both are used in Section~\ref{sec:freeboundary}.
\end{proof}

We next turn to the superoptimality principle.

\begin{lemma}\label{lem:superopt}
Let $v\in \mathrm{LSC}(\widehat\Omega)$ be a viscosity supersolution of \eqref{eq:HJVI} on $\widehat\Omega\setminus\{x^*\}$ with $0\le v(x^*)<\infty$. Then $v\ge u$ on $\widehat\Omega$.
\end{lemma}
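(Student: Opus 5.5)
Our plan is a descent argument driven by the metric supersolution condition \eqref{eq:super-metric}, in which we follow $v$ downhill rather than a direction field. It suffices to prove $v(x_0)\ge u(x_0)=\dA(x^*,x_0)$ for every $x_0\in\widehat\Omega$ with $v(x_0)<\infty$. At points where $v=+\infty$ there is nothing to prove, and $x_0=x^*$ is trivial because $v(x^*)\ge0=u(x^*)$.

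Fix such an $x_0$ and, for contradiction, suppose $v(x_0)<\dA(x^*,x_0)$. Consider
\[
S:=\{\hat z\in\widehat\Omega:\ v(\hat z)+\dA(x_0,\hat z)\le v(x_0)\}.
\]
This set is closed in the component of $x_0$, because $v$ is lower semicontinuous and $\dA(x_0,\cdot)$ is continuous. It is therefore compact by Remark~\ref{rem:lift}, and it contains $x_0$. The function $\hat z\mapsto v(\hat z)+\dA(x^*,\hat z)$ is lower semicontinuous and takes values less than $\dA(x^*,x_0)+\dA(x_0,\hat z)$ on $S$ (this is the Ekeland-type device). Rather than minimize it directly, we minimize $\dA(x^*,\cdot)$ over $S$ and let $\hat z_0$ be a minimizer, which exists by compactness.

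We first show $\hat z_0\ne x^*$. If $x^*\in S$, then $v(x^*)+\dA(x_0,x^*)\le v(x_0)<\dA(x^*,x_0)$, which forces $v(x^*)<0$ and contradicts the hypothesis $v(x^*)\ge0$. Moreover $v(\hat z_0)\le v(x_0)<\infty$, so \eqref{eq:super-metric} applies at $\hat z_0$.

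For small $r<r_0(\hat z_0)$ we pick $\hat w$ with $\dA(\hat z_0,\hat w)=r$ and $v(\hat w)\le v(\hat z_0)-r+\epsilon_r$. The triangle inequality in the metric space $\widehat\Omega$ then gives
\[
v(\hat w)+\dA(x_0,\hat w)\le v(\hat z_0)-r+\epsilon_r+\dA(x_0,\hat z_0)+r\le v(x_0)+\epsilon_r .
\]
So $\hat w$ lies in $S$ up to the slack $\epsilon_r$. The exact descent inequality in \eqref{eq:super-metric} is an infimum, so this slack has to be absorbed. We will do so by replacing $S$ with $S_\eta:=\{v+\dA(x_0,\cdot)\le v(x_0)+\eta\}$, assuming from the contradiction hypothesis a strict gap $v(x_0)+\eta<\dA(x^*,x_0)$, and choosing $\epsilon_r$ summable along the iteration.

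The core difficulty is that the step above does not by itself lower $\dA(x^*,\cdot)$: the point $\hat w$ could move away from $x^*$. We therefore abandon the single minimization and use an iterated descent polygon, $\hat z_{k+1}=\hat w$. Along this polygon $v$ drops by $r_k-\epsilon_k$ while the polygon length grows by $r_k$, so that $v(\hat z_k)\le v(x_0)-L_k+\sum\epsilon$, where $L_k$ is the polygon length. Since $v\ge\min v>-\infty$ by lower semicontinuity on a compact set, $L_k$ stays bounded. We choose each $r_k$ to be half the supremum of the admissible radii (the usual greedy choice) and extract a limit $\hat z_\infty$ using compactness. Lower semicontinuity then gives $v(\hat z_\infty)\le v(x_0)-L+\sum\epsilon$. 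Maximality of the greedy choice forces $r_0(\hat z_\infty)=0$, which is only possible at $\hat z_\infty=x^*$, the one point where \eqref{eq:super-metric} is not imposed. At that point
\[
0\le v(x^*)\le v(x_0)-L+\eta\le v(x_0)-\dA(x_0,x^*)+\eta ,
\]
because $L\ge\dA(x_0,x^*)$. This contradicts the gap.

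The main obstacle is the limiting step. Here $r_0(\cdot)$ is not lower semicontinuous, and a transfinite or Zorn argument on the partial order $\hat z\preceq\hat w\iff v(\hat w)+\dA(\hat z,\hat w)\le v(\hat z)$ (the Brøndsted–Ekeland order) may be cleaner. Our first attempt would be to use Ekeland's variational principle on the compact metric space $\widehat\Omega$ with the function $v$ and constant $1-\delta$. This yields a point $\hat z$ at which $v(\hat w)>v(\hat z)-(1-\delta)\dA(\hat z,\hat w)$ for all $\hat w\ne\hat z$, and $\hat z$ can be taken in the order cone of $x_0$. At $\hat z\ne x^*$ this directly contradicts \eqref{eq:super-metric}, so $\hat z=x^*$. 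We then obtain $v(x_0)\ge v(x^*)+(1-\delta)\dA(x_0,x^*)\ge(1-\delta)u(x_0)$, and we let $\delta\to0$.
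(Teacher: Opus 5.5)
Your final paragraph is a correct proof, and it takes a genuinely different route from the paper. Apply Ekeland's principle on the component of $x_0$ in $\widehat\Omega$. That component is complete for $\dA$ by Remark~\ref{rem:lift}, $v$ is lower semicontinuous and bounded below there, and $v(x_0)<\infty$. With constant $\lambda=1-\delta$ the principle gives $\hat z$ with $v(\hat z)+(1-\delta)\dA(x_0,\hat z)\le v(x_0)$ and $v(\hat w)>v(\hat z)-(1-\delta)\dA(\hat z,\hat w)$ for all $\hat w\ne\hat z$.

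If $\hat z\ne x^*$, then $v(\hat z)<\infty$, and \eqref{eq:super-metric} yields, for small $r$, $\inf\{v(\hat w):\dA(\hat z,\hat w)=r\}\le v(\hat z)-r<v(\hat z)-(1-\delta)r$. This is incompatible with the strict inequality, so $\hat z=x^*$. Then $v(x^*)\ge0$, the symmetry of $\dA$ and $u=\dA(x^*,\cdot)$ conclude. Since spheres are compact and $v$ attains its infimum on them, even the constant $1$ would do.

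The argument also disposes of disconnectedness for free. If $x^*$ lay outside the component of $x_0$, no Ekeland point would be admissible, so $v\equiv+\infty$ off the component of $x^*$. This is the conclusion the paper draws in passing.

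The paper instead regularizes $v$ by an inf-convolution in $\dA$ and extracts the slope bound \eqref{eq:slope} from \eqref{eq:super-metric} at the single point $\hat y_\varepsilon(\hat x)$. It then builds a descent polygon from sub-arcs of minimizing geodesics that must terminate near $x^*$, and lets $h\to0$ and $\varepsilon\to0$. That route is constructive: it exhibits an explicit admissible near-optimal path from $x_0$ to $x^*$, in keeping with the paper's trajectory-based theme. It pays for this with geodesic existence, attainment of the inf-convolution, and the monotone convergence $v_\varepsilon\uparrow v$ at $x^*$. Your route uses only completeness and lower semicontinuity. It also makes transparent that \eqref{eq:super-metric} says precisely that no point other than $x^*$ can be an Ekeland point; the paper's slope bound is the inf-convolution version of the same mechanism.

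Write the proof with the Ekeland argument alone and drop everything before it. The greedy polygon is not a proof as it stands. The step claiming that maximality of the greedy choice forces $r_0(\hat z_\infty)=0$ is unsupported. The radius $r_0$ of Definition~\ref{def:super} is merely asserted to exist at each point and has no semicontinuity. Nothing in your construction prevents the chosen radii from tending to $0$ while the iterates accumulate at a point other than $x^*$, where \eqref{eq:super-metric} holds with a positive radius. The standard repair is to choose each iterate as an almost-minimizer of $v$ over the cone $\{\hat w: v(\hat w)+\lambda\dA(\hat z_k,\hat w)\le v(\hat z_k)\}$, so that the nested cones shrink to a point. That repair is exactly the proof of Ekeland's principle, so quoting the principle, as you end up doing, is the right move.
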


\begin{proof}
Fix $x_0\in\widehat\Omega$; we must show $v(x_0)\ge u(x_0)$. The strategy is to build, from the supersolution $v$, an admissible trajectory issuing from $x_0$ along which $v$ decreases at unit rate and which reaches $x^*$; a length comparison then gives the inequality. We may assume $v(x_0)<\infty$, there being nothing to prove otherwise; in particular $v\not\equiv+\infty$, so $\min_{\widehat\Omega}v$ is finite. The argument will show in passing that $x_0$ lies in the connected component of $x^*$ in $\widehat\Omega$, so that $u(x_0)<\infty$ as well.

Everything takes place in the connected component of $x_0$ in $\widehat\Omega$, which by Remark~\ref{rem:lift} is a compact length space with intrinsic metric $\dA$ (there are finitely many components, $\C$ being a finite union of compact sheets); by the Hopf--Rinow theorem for such spaces, any two points of one component are joined by a minimizing geodesic, and every sub-arc of such a geodesic is an admissible trajectory. This is what replaces, here, the Euclidean segments of the classical argument, and it is what makes the descent below automatically non-crossing.

\medskip
\noindent\emph{Step A: inf-convolution in the intrinsic metric.}
For $\varepsilon>0$ set
\[
v_\varepsilon(\hat x):=\min_{\hat y\in\widehat\Omega}
\Bigl\{v(\hat y)+\tfrac{1}{2\varepsilon}\dA(\hat x,\hat y)^2\Bigr\},
\qquad \hat x\in\widehat\Omega,
\]
the minimum being attained ($v$ being lower semicontinuous on that compact component) at some $\hat y_\varepsilon(\hat x)$. Taking $\hat y=\hat x$ as a competitor gives $v_\varepsilon\le v$, while $v_\varepsilon(\hat x)\ge\min_{\widehat\Omega}v+\tfrac{1}{2\varepsilon}\dA(\hat x,\hat y_\varepsilon(\hat x))^2$ by definition; hence
\begin{equation}\label{eq:infconv-est}
\dA\bigl(\hat x,\hat y_\varepsilon(\hat x)\bigr)^2\ \le\ 2\varepsilon\Bigl(v_\varepsilon(\hat x)-\min_{\widehat\Omega}v\Bigr) .
\end{equation}
The minimum of the lower semicontinuous $v$ on the compact $\widehat\Omega$ is finite; $v$ itself may take the value $+\infty$, which is why \eqref{eq:infconv-est} is written with $v_\varepsilon$ on the right. Along the descent of Step B the quantity $v_\varepsilon$ only decreases from $v_\varepsilon(x_0)\le v(x_0)<\infty$, so throughout that construction
\[
\dA\bigl(\xi_k,\hat y_\varepsilon(\xi_k)\bigr)\ \le\ r_\varepsilon:=\sqrt{2\varepsilon\bigl(v(x_0)-\textstyle\min_{\widehat\Omega}v\bigr)}\ \xrightarrow[\varepsilon\to0]{}\ 0 .
\] The intrinsic metric is used here in place of the Euclidean one for a single but decisive reason, made explicit in Step B: the minimizer is then reached from $\hat x$ along an \emph{admissible} geodesic, so the descent direction it defines is an arrival direction at $\hat x$ itself.

\emph{The slope bound.} We claim that for every $\hat x\ne x^*$,
\begin{equation}\label{eq:slope}
\rho_\varepsilon(\hat x):=\dA\bigl(\hat x,\hat y_\varepsilon(\hat x)\bigr)\ \ge\ \varepsilon
\qquad\text{whenever}\qquad \hat y_\varepsilon(\hat x)\ne x^* .
\end{equation}
Indeed, write $\hat y_*:=\hat y_\varepsilon(\hat x)$ and $\rho:=\rho_\varepsilon(\hat x)$, so that
\[
v_\varepsilon(\hat x)=v(\hat y_*)+\frac{\rho^2}{2\varepsilon} ,
\]
and suppose $\hat y_*\ne x^*$. Let $r>0$ be small and $\eta>0$. By Definition~\ref{def:super} applied at $\hat y_*$, where $v(\hat y_*)\le v_\varepsilon(\hat x)\le v(\hat x)<\infty$, there is for each small $r$ a point $\hat z$ with $\dA(\hat y_*,\hat z)=r$ and
\[
v(\hat z)\ \le\ v(\hat y_*)-r+\eta .
\]
Using $\hat z$ as a competitor in the minimum defining $v_\varepsilon(\hat x)$, and the triangle inequality $\dA(\hat x,\hat z)\le\rho+r$,
\[
v(\hat y_*)+\frac{\rho^2}{2\varepsilon}
= v_\varepsilon(\hat x)
\ \le\ v(\hat z)+\frac{\dA(\hat x,\hat z)^2}{2\varepsilon}
\ \le\ v(\hat y_*)-r+\eta+\frac{(\rho+r)^2}{2\varepsilon} .
\]
Cancelling and rearranging,
\[
r\ \le\ \eta+\frac{2\rho r+r^2}{2\varepsilon},
\qquad\text{so}\qquad
1\ \le\ \frac{\eta}{r}+\frac{2\rho+r}{2\varepsilon} .
\]
Letting $\eta\to0$ and then $r\to0$ gives $\rho\ge\varepsilon$, which is \eqref{eq:slope}.

The computation covers the degenerate case $\hat y_*=\hat x$ as well, and excludes it when $\hat x\ne x^*$: there $\rho=0$, contradicting $\rho\ge\varepsilon$.

Three features of this argument deserve emphasis. It uses no test function, no differentiability and no regularity of $\dA$, so it is insensitive to the failure of semiconcavity of $\dA(\hat x,\cdot)$ at the tips --- which is real, and is exactly why Definition~\ref{def:super} is metric. It uses the supersolution property only at the single point $\hat y_*$. And the constant in \eqref{eq:slope} is sharp: for $v=\dA(x^*,\cdot)$ in the absence of a crack one computes $\rho=\varepsilon$ exactly.

\medskip
\noindent\emph{Step B: an admissible descent polygon.}
Fix $h\in(0,\varepsilon)$ and define $\xi_0:=x_0$ and, inductively, $\xi_{k+1}:=\zeta_k(h)$, where $\zeta_k$ is a minimizing geodesic of $\widehat\Omega$ from $\xi_k$ to $\hat y_\varepsilon(\xi_k)$, parametrized by arc length. The scheme is stopped as soon as $\hat y_\varepsilon(\xi_k)=x^*$ or $\xi_k=x^*$.

\emph{Admissibility.} Each arc $\zeta_k|_{[0,h]}$ is a sub-arc of a minimizing geodesic of $\widehat\Omega$, hence a continuous curve in $\widehat\Omega$, hence admissible by Remark~\ref{rem:lift}; the concatenation $\sigma_h$ of the arcs is therefore an admissible trajectory of length $Nh$ after $N$ steps. No viability or inward-pointing argument is needed: the descent follows geodesics of the slit completion, which cannot cross $\C$ by construction. Equivalently, in the language of \eqref{eq:A}--\eqref{eq:Aarr}, the velocity $\dot\zeta_k(0)$ is the negative of the arrival direction at $\xi_k$ of the reversed geodesic, so it lies in $A(\xi_k)$ whenever that derivative exists.

\emph{Decrease.} Using $\hat y_*:=\hat y_\varepsilon(\xi_k)$ as a competitor in the minimum defining $v_\varepsilon(\xi_{k+1})$, and $\dA(\xi_{k+1},\hat y_*)=\rho_\varepsilon(\xi_k)-h$ because $\xi_{k+1}$ lies on a minimizing geodesic from $\xi_k$ to $\hat y_*$ at arc length $h$,
\[
\begin{aligned}
v_\varepsilon(\xi_{k+1})
&\le v(\hat y_*)+\frac{(\rho_\varepsilon(\xi_k)-h)^2}{2\varepsilon}
= v_\varepsilon(\xi_k)-\frac{h\bigl(2\rho_\varepsilon(\xi_k)-h\bigr)}{2\varepsilon}\\
&\le v_\varepsilon(\xi_k)-h+\frac{h^2}{2\varepsilon},
\end{aligned}
\]
the last step by the slope bound \eqref{eq:slope}. Note that this estimate is purely metric: it uses neither differentiability nor semiconcavity of $v_\varepsilon$, and in particular no generalized characteristics.

\emph{Termination.} Each step lowers $v_\varepsilon$ by at least $h(1-h/2\varepsilon)>0$, while $v_\varepsilon\ge\min_{\widehat\Omega}v>-\infty$; hence the scheme stops after finitely many steps, say $N$, and it can stop only through $\hat y_\varepsilon(\xi_N)=x^*$ or $\xi_N=x^*$. In either case
\[
\dA(\xi_N,x^*)\le r_\varepsilon .
\]

\medskip
\noindent\emph{Step C: length comparison.}
Summing the decrease over the $N$ steps gives
\[
N h\Bigl(1-\frac{h}{2\varepsilon}\Bigr)\ \le\ v_\varepsilon(x_0)-v_\varepsilon(\xi_N)\ \le\ v(x_0)-m_\varepsilon ,
\qquad
m_\varepsilon:=\min\bigl\{v(x^*),\,v_\varepsilon(x^*)\bigr\} ,
\]
where $v_\varepsilon\le v$ was used at $x_0$, and $v_\varepsilon(\xi_N)\ge m_\varepsilon$ holds in both stopping cases: if $\hat y_\varepsilon(\xi_N)=x^*$ then $v_\varepsilon(\xi_N)\ge v(\hat y_\varepsilon(\xi_N))=v(x^*)$, while if $\xi_N=x^*$ then $v_\varepsilon(\xi_N)=v_\varepsilon(x^*)$. Since inf-convolutions of a lower semicontinuous function increase to it pointwise, $m_\varepsilon\to v(x^*)\ge0$ as $\varepsilon\to0$.
The concatenation $\sigma_h$ is admissible from $x_0$ to $\xi_N$ and has length $Nh$, so
\[
\dA(x_0,x^*)\ \le\ Nh+\dA(\xi_N,x^*)\ \le\ \frac{v(x_0)-m_\varepsilon}{1-h/2\varepsilon}+r_\varepsilon .
\]
In particular $\dA(x_0,x^*)<\infty$, which is the finiteness announced at the outset: the descent polygon joins $x_0$ to $x^*$ inside $\widehat\Omega$. Since $\dA$ is symmetric (Definition~\ref{def:pathmetric}),
\[
u(x_0)=\dA(x^*,x_0)=\dA(x_0,x^*)\le\frac{v(x_0)-m_\varepsilon}{1-h/2\varepsilon}+r_\varepsilon .
\]
Letting $h\to0$ at fixed $\varepsilon$, and then $\varepsilon\to0$ (so that $r_\varepsilon\to0$ and $m_\varepsilon\to v(x^*)\ge0$), we obtain $u(x_0)\le v(x_0)$. As $x_0$ was arbitrary, $v\ge u$ on $\widehat\Omega$.
\end{proof}

\begin{remark}[What the comparison actually uses]\label{rem:weakclass}
The proof of Lemma~\ref{lem:superopt} uses the supersolution property only through the slope bound \eqref{eq:slope}, and that bound was obtained from \eqref{eq:super-metric} alone, at the single point $\hat y_\varepsilon(\hat x)$, with no test function and no differentiability. Together with Proposition~\ref{prop:metric-implies-bellman} this gives a clean division of labour: the metric condition \eqref{eq:super-metric} is what the comparison consumes, and the Bellman inequality \eqref{eq:super-bellman} --- which it implies --- is what identifies the problem as a Hamilton--Jacobi equation and distinguishes the arrival set from the velocity set.

We do not claim an enlargement of the uniqueness class. The converse implication, from \eqref{eq:super-bellman} back to \eqref{eq:super-metric}, is exactly a local superoptimality principle and is not established here; and at a crack tip the natural route to it is blocked, the competitor it would use admitting no $C^1$ minorant, as shown after Definition~\ref{def:super}. What is established is one implication, and the definition is taken on the side from which it points.
\end{remark}

Combining the two optimality principles yields a comparison principle, in which the value function is sandwiched between every subsolution and every supersolution.

\begin{theorem}\label{thm:comparison}
Let $w$ be a viscosity subsolution of \eqref{eq:HJVI} in the class of Lemma~\ref{lem:subopt}, and let $v\in\mathrm{LSC}(\widehat\Omega)$ be a viscosity supersolution on $\widehat\Omega\setminus\{x^*\}$ with $v(x^*)<\infty$.
If $w(x^*)\le0\le v(x^*)$, then
\[
w\le u\le v\quad\text{on }\widehat\Omega ;
\]
in particular $w\le v$, and $u$ is both the largest subsolution and the smallest supersolution with these boundary values at $x^*$.
\end{theorem}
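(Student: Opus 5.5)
The plan is to obtain Theorem~\ref{thm:comparison} by combining the two one-sided optimality principles already established. No new doubling-of-variables argument is needed. The value function $u$ will serve as the intermediate object.

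\textbf{Step 1 (lower half).} Since $w$ lies in the class of Lemma~\ref{lem:subopt} (continuous on $\widehat\Omega$, locally Lipschitz in $\overline\Omega\setminus\C$, with $w(x^*)\le0$), that lemma applies verbatim and gives $w\le u$ on $\widehat\Omega$. The underlying mechanism, which I would recall rather than reprove, is the $1$-Lipschitz estimate \eqref{eq:nonexp} for $w$ with respect to $\dA$. It yields $w(x)\le w(x^*)+\dA(x^*,x)\le u(x)$. On components of $\widehat\Omega$ not containing $x^*$ one has $u\equiv+\infty$, so the inequality is trivial there.

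\textbf{Step 2 (upper half).} Since $v\in\mathrm{LSC}(\widehat\Omega)$ is a supersolution on $\widehat\Omega\setminus\{x^*\}$ with $0\le v(x^*)<\infty$, Lemma~\ref{lem:superopt} gives $u\le v$. Here one should check that the hypotheses match exactly: the finiteness proviso $v(x^*)<\infty$ and the sign $v(x^*)\ge0$ are precisely what Lemma~\ref{lem:superopt} requires. At points where $v=+\infty$ there is nothing to prove. Chaining the two steps gives $w\le u\le v$, and in particular $w\le v$.

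\textbf{Step 3 (extremality).} By Theorem~\ref{thm:main}, $u$ is itself a subsolution in the class of Lemma~\ref{lem:subopt}, since it is continuous on $\widehat\Omega$ and locally Lipschitz off $\C$. It is also a supersolution in the sense of Definition~\ref{def:super}, via Step~2 of that proof, and it satisfies $u(x^*)=0$. Hence $u$ is admissible on both sides. The sandwich then shows that $u$ is the largest subsolution and the smallest supersolution with the stated values at $x^*$.

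The only point requiring care, and the one I would treat as the main (mild) obstacle, is consistency of the function classes. The subsolution must lie in the regularity class of Lemma~\ref{lem:subopt}; no such restriction is placed on $v$ beyond lower semicontinuity. One must also read ``largest subsolution'' within that class. Similarly, the infinite values on other components must be handled by the finiteness provisos of Definitions~\ref{def:sub} and~\ref{def:super}. I would state these readings explicitly and otherwise let the two lemmas carry the whole argument.
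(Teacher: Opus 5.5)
Your proposal is correct and follows the paper's proof, which consists of exactly your Steps 1 and 2: Lemma~\ref{lem:subopt} gives $w\le u$ and Lemma~\ref{lem:superopt} gives $u\le v$. Your Step 3 uses Theorem~\ref{thm:main} to check that $u$ itself lies in both classes with $u(x^*)=0$. This makes explicit the extremality claim, which the paper leaves implicit.
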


\begin{proof}
Lemma~\ref{lem:subopt} gives $w\le u$ and Lemma~\ref{lem:superopt} gives $u\le v$.
\end{proof}

\begin{remark}[Neither lip dominates]\label{rem:lips}
One might hope to simplify the formulation by declaring one side of each sheet admissible and the other forbidden --- that is, by co-orienting $\C$ so that the cheaper lip is always the same one. For a convex sheet seen from outside, such as the circular arc of Section~\ref{sec:freeboundary}, this is indeed the case, and the source-facing lip is cheaper at every point. In general, however, no such choice exists, and this is why the two traces of Remark~\ref{rem:slit} must be carried independently throughout.

The obstruction is topological rather than metric, and a single smooth sheet already exhibits it. Let $n=2$ and let $\C$ be the Archimedean spiral
\[
\C=\bigl\{\bigl((1+\tfrac{\theta}{4})\cos\theta,\ (1+\tfrac{\theta}{4})\sin\theta\bigr):0\le\theta\le4\pi\bigr\},
\]
a single $C^1$ sheet with two free ends, and place the source $x^*=(-2.55,0)$ in the channel between the first and the second turn. Both lips of this sheet are reachable, since the sheet does not close. On the inner turn the channel containing $x^*$ lies \emph{outside} the sheet, so the outer lip is the cheap one there and the inner lip can be reached only by travelling to the inner free end; on the next turn the same channel lies \emph{inside} the sheet, and the roles are exchanged. The cheaper lip therefore switches along the sheet, at $\theta=1.886\pi$ --- arc length $10.43$ from the inner end --- where the two traces cross transversally; see Figure~\ref{fig:lips}. The computation is the one of Section~\ref{sec:numerics}, with both lips carrying graph nodes. The switch is not a near-tie produced by the discretization: the two traces differ by up to $16.5$ elsewhere on the sheet, and the crossing moves by less than $10^{-3}$ in $\theta$ when the lip offset is refined from $10^{-2}$ to $10^{-3}$.

Any co-orientation of this sheet would consequently declare the cheaper lip forbidden along one of the two portions, and the value function of the resulting one-sided problem would not be the geodesic distance of the slit domain. What \emph{is} true is that the coincidence set $\{u_+=u_-\}$ --- the locus reached at equal cost from both sides --- is closed and separates each sheet into the portions on which one or the other lip is the cheaper; on the spiral it is the single point $\theta=1.886\pi$. It plays no role in the analysis below, which is local to a lip; we record it because it is precisely the obstruction to a one-sided reformulation.
\end{remark}

\begin{figure}[htbp]
\centering
\includegraphics[width=\linewidth]{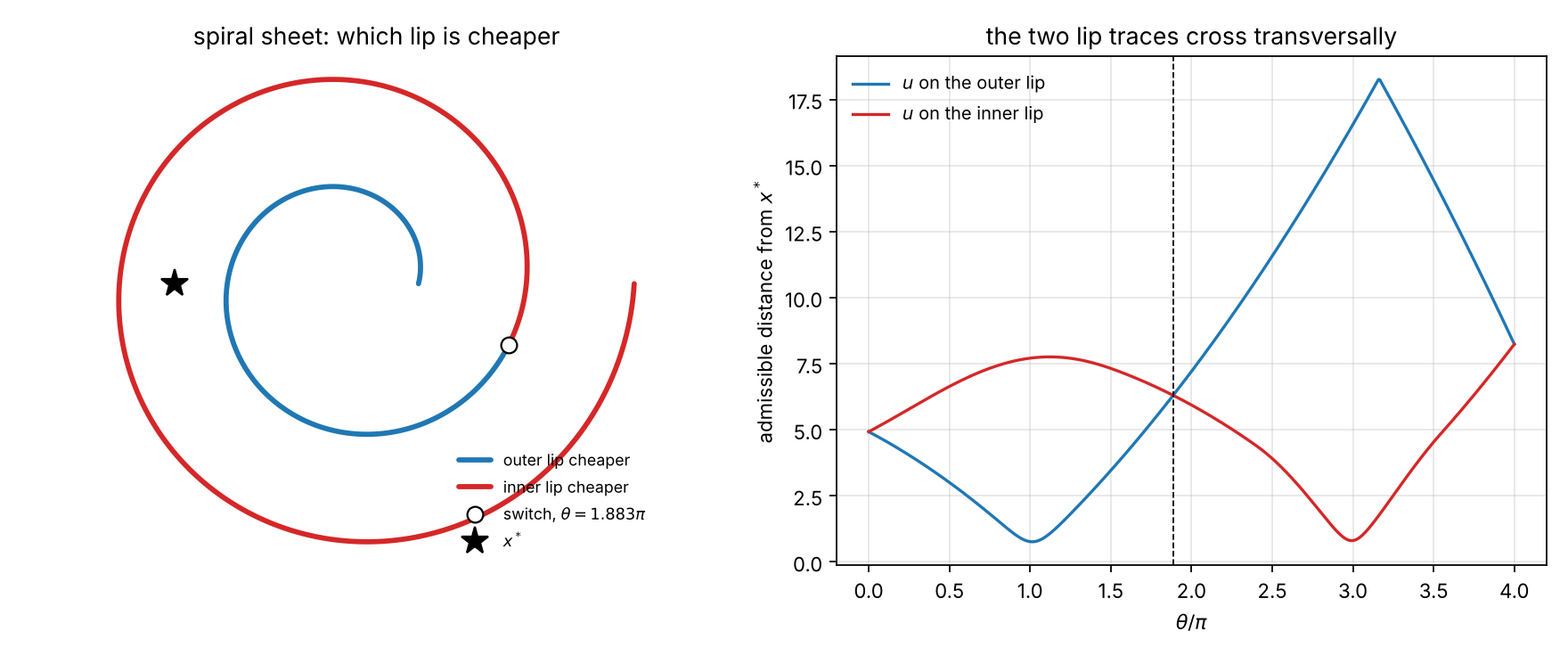}
\caption{Neither lip of a crack is systematically the cheaper one. Left: the spiral sheet of Remark~\ref{rem:lips}, coloured by which of its two lips is reached at lower cost from $x^*$ (star); the two colours meet at a single switching point (white circle). Right: the two lip traces of $u$ along the sheet, crossing transversally at $\theta=1.886\pi$. Computed on the visibility graph of the slit completion, with each crack sample duplicated into two lip nodes.}
\label{fig:lips}
\end{figure}

\begin{remark}[Why doubling of variables fails at the interface]
The classical doubling argument of Crandall, Ishii, and Lions~\cite{CrandallIshiiLions1992}, applied to \eqref{eq:HJVI}, breaks down when the minimizing point of the doubled functional lands on $\C$. The obstruction is present on both sides.

On the subsolution side, \eqref{eq:sub-bellman} bounds only $|\taugrad\varphi|$ when $\nderiv\varphi\le0$ and leaves the normal component of the doubling variable $p_\varepsilon$ unconstrained; hence $|p_\varepsilon|\le1$ need not hold at a crack point, and the usual contradiction $1\le|p_\varepsilon|\le1$ is unavailable.

On the supersolution side the obstruction is of another kind: the doubled functional is minimized over $\widehat\Omega\times\widehat\Omega$, whose two points may sit on opposite lips over the same crack point, where the penalization $|x-y|^2/\varepsilon$ does not force them together in the intrinsic metric. The two inequalities are then evaluated against opposite normals and cannot be combined.

The trajectory-based Lemmas~\ref{lem:subopt} and~\ref{lem:superopt} circumvent this by working along admissible curves rather than at a single doubled point; the superoptimality direction relies on the metric form \eqref{eq:super-metric} of the supersolution condition, the admissible descent direction being furnished by a minimizing geodesic of $\widehat\Omega$ rather than by any supremum.
\end{remark}

\subsection{Geometric characterization of optimal trajectories}


We now derive the tangential contact condition at an interior crack contact. It holds for every admissible trajectory, optimal or not.

\begin{theorem}\label{thm:tangential}
Let $\gamma$ be an \emph{admissible} trajectory and let $t_0$ be an interior time at which $\gamma$ touches the crack at a regular point $y=\gamma(t_0)\in\C_{\mathrm{reg}}$.
If $\dot\gamma(t_0)$ exists, then
\[
\dot{\gamma}(t_0)\cdot n_\C(y)=0 .
\]
Moreover, if $\gamma$ propagates along the regular part of the crack over a nontrivial interval,
\[
\gamma([t_1,t_2])\subset \C_{\mathrm{reg}},
\]
then
\[
\dot{\gamma}(t)\in T_{\gamma(t)}\C
\quad \text{for a.e. } t\in [t_1,t_2].
\]
\end{theorem}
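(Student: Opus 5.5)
The plan is to argue entirely through liftability (Remark~\ref{rem:lift}) and the local signed distance $s_\C$. No optimality is used, in keeping with the statement's claim that tangency holds for every admissible trajectory.

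Fix an interior time $t_0$ with $y=\gamma(t_0)\in\C_{\mathrm{reg}}$. Let $U$ be a tubular neighborhood of $y$ of the kind supplied by Section~\ref{sec:formulation}: it meets no tip or junction and is disjoint from the cut locus, so $s_\C$ is $C^{1,1}$ on $U$ with $\nabla s_\C(y)=\nC(y)$. By continuity, $\gamma(t)\in U$ for $|t-t_0|<\delta$. Clause~(ii) of Definition~\ref{def:admissible} then says that $f(t):=s_\C(\gamma(t))$ does not change sign on $(t_0-\delta,t_0+\delta)$. Choosing the sign of $s_\C$ so that the lip carrying $\hat\gamma(t_0)$ is the near side, we get $f\le0$ there and $f(t_0)=0$. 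So $f$ has an interior local maximum at $t_0$.

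If $\dot\gamma(t_0)$ exists, the chain rule applies, since $s_\C$ is $C^1$ and $\gamma$ is differentiable at $t_0$. It gives $f'(t_0)=\nabla s_\C(y)\cdot\dot\gamma(t_0)=\dot\gamma(t_0)\cdot\nC(y)$. Fermat's rule at the interior maximum forces $f'(t_0)=0$, and the sign of $\nC$ is irrelevant because only the vanishing is claimed. This is the first assertion.

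One step needs care: choosing a consistent sign of $s_\C$ on the whole interval. Clause~(ii) gives exactly that, since $f$ has one sign there. Equivalently, as in Remark~\ref{rem:lift}, one can use the continuous lift $\hat\gamma$, which stays on one lip near $t_0$. This is what rules out the transversal crossing that the a.e.\ condition~(i) alone cannot detect. I expect this to be the main point to state precisely, rather than any computation. The one-sided difference quotients $(\gamma(t_0\pm h)-y)/(\pm h)$ both lie, up to $o(1)$, in $T(\hat\gamma(t_0))$ and $-T(\hat\gamma(t_0))$ respectively. This recovers the same conclusion as $T\cap(-T)=T_y\C$ on a lip, and gives a second route to the identity.

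For the second assertion, suppose $\gamma([t_1,t_2])\subset\C_{\mathrm{reg}}$. Since $\gamma$ is Lipschitz, it is differentiable at a.e.\ $t\in(t_1,t_2)$. Every such $t$ is an interior time of contact at a regular point, so the first part gives $\dot\gamma(t)\cdot\nC(\gamma(t))=0$, i.e.\ $\dot\gamma(t)\in T_{\gamma(t)}\C$. The endpoints $t_1,t_2$ form a null set and are discarded.

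There is also a direct check that avoids the sign argument. On the interval one has $s_\C(\gamma(t))\equiv0$ locally, because $\gamma$ stays on $\C$. Differentiating this identity, with $U$ taken around each point and compactness of $[t_1,t_2]$ used to cover the interval by finitely many such neighborhoods, gives the same tangency a.e.
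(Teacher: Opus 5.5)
Your proposal is correct and follows the paper's proof. The no-sign-change clause~(ii) of Definition~\ref{def:admissible} makes $t_0$ an interior maximum of $t\mapsto s_\C(\gamma(t))$, and the chain rule with Fermat's rule then gives $\dot\gamma(t_0)\cdot n_\C(y)=0$. For the sliding arc, your main route applies the first part at every interior time of differentiability, whereas the paper differentiates the identity $s_\C(\gamma(t))\equiv0$ on a finite cover by tubular neighborhoods; that is the alternative you also record, and the two are interchangeable here.
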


\begin{proof}
Recall the local signed distance $s_\C$ on a tubular neighborhood $U$ of the regular point $y$ (positive on the far side), so that $s_\C\in C^1(U)$, $|\nabla s_\C|\equiv1$, and $\nabla s_\C=\nC$ on $\C\cap U$.

By admissibility (condition (ii) of Definition~\ref{def:admissible}), the trajectory does not cross to the far side, hence
\[
s_\C(\gamma(t))\le0
\quad\text{for all } t \text{ near } t_0,
\qquad
s_\C(\gamma(t_0)) = 0 .
\]
Thus $t_0$ is an interior maximum of the function $t\mapsto s_\C(\gamma(t))$. If $\dot\gamma(t_0)$ exists, this function is differentiable at $t_0$ with derivative
\[
\frac{d}{dt}\Big|_{t=t_0} s_\C(\gamma(t)) = \nabla s_\C(y)\cdot\dot\gamma(t_0) = n_\C(y)\cdot\dot\gamma(t_0),
\]
and the first-order condition at an interior maximum gives $n_\C(y)\cdot\dot\gamma(t_0)=0$, as claimed. Note that this is a genuine two-sided first-order condition, and that it comes from the non-crossing clause~(ii) of Definition~\ref{def:admissible}, not from optimality: the velocity constraint of clause~(i) alone would give only one inequality. Optimality plays no role in this theorem, which is why it may be applied freely below to sub-arcs and to competitors.

For the sliding regime, suppose $\gamma([t_1,t_2])\subset\C_{\mathrm{reg}}$. The arc being compact and contained in the regular part, it is covered by finitely many tubular neighborhoods of the kind used above, on each of which a signed distance $s_\C$ is defined; on each such piece $s_\C(\gamma(t))\equiv0$, and differentiating at a.e.\ $t$ gives $n_\C(\gamma(t))\cdot\dot\gamma(t)=0$. Since $\C_{\mathrm{reg}}$ is a $C^1$ hypersurface and the curve lies in $\C$, the vanishing of the normal component of the velocity is equivalent to
\[
\dot{\gamma}(t)\in T_{\gamma(t)}\C
\]
for almost every $t\in[t_1,t_2]$.
\end{proof}

As a consequence, an optimal trajectory never reflects off the crack at a regular point.

\begin{corollary}\label{cor:nocorner}
Let $n=2$, let $y\in\C_{\mathrm{reg}}$ be an isolated contact point of an optimal trajectory, and suppose that near $y$ the trajectory consists of two straight segments $[a,y]$ and $[y,b]$ contained in $\Omega$ and meeting $\C$ only at $y$. Suppose in addition that the closed triangle with vertices $a,y,b$ meets $\C$ only at $y$.
Then both segments are tangent to $\C$ at $y$ and
\[
\frac{y-a}{|y-a|}=\frac{b-y}{|b-y|},
\]
so the trajectory passes straight through $y$, grazing the crack.
In particular the equal-angle (Snell-type) reflection condition holds only in this degenerate form: a genuine reflection at a regular crack point is never optimal, and in the plane an optimal trajectory changes direction on $\C$ only at tips, at junctions, or along sliding arcs (Theorem~\ref{thm:structure}).
\end{corollary}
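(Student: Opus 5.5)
The plan is to get tangency from Theorem~\ref{thm:tangential} and straightness from a shortcut argument inside the triangle. Since $y$ is an isolated contact point and the trajectory near $y$ is the union of $[a,y]$ and $[y,b]$, the time $t_0$ at which $\gamma(t_0)=y$ is interior. The one-sided derivatives of $\gamma$ at $t_0$ are the unit directions $e_-:=(y-a)/|y-a|$ and $e_+:=(b-y)/|b-y|$, under arc-length parametrization.

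\emph{Step 1: a corner is not optimal.} Suppose $e_-\ne e_+$. The closed triangle $\Delta$ with vertices $a,y,b$ is then nondegenerate (the case $e_+=-e_-$ is treated separately below). By hypothesis $\Delta\cap\C=\{y\}$, and $[a,b]\subset\Delta\setminus\{y\}$. So the chord $[a,b]$ misses $\C$ entirely. It also lies in $\Omega$: it is a segment in the convex hull of the two segments, and we use the hypothesis $[a,y],[y,b]\subset\Omega$, shrinking $a,b$ toward $y$ along the trajectory if necessary so that $\Delta$ sits in a small ball inside $\Omega$, since $y\in\C_{\mathrm{reg}}\subset\Omega$. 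The chord is therefore an admissible curve, by Remark~\ref{rem:lift}, as a continuous path in $\Omega\setminus\C$. The homotopy through $\Delta\setminus\{y\}$ keeps it on the same lifts of $a$ and $b$; the endpoints $a,b$ lie off $\C$, so no lip ambiguity arises. By the strict triangle inequality, $|b-a|<|y-a|+|b-y|$ whenever $e_-\ne e_+$. Replacing the sub-arc by the chord therefore strictly shortens an admissible competitor, contradicting optimality. The degenerate case $e_+=-e_-$ is a backtrack and is excluded even more directly, since cutting out the doubled portion shortens the curve. Hence $e_-=e_+$, and $\gamma$ is differentiable at $t_0$ with $\dot\gamma(t_0)=e_-$.

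\emph{Step 2: tangency.} With $\dot\gamma(t_0)$ now known to exist at an interior contact time with a regular point, Theorem~\ref{thm:tangential} gives $\dot\gamma(t_0)\cdot n_\C(y)=0$. So the common line through $a,y,b$ is tangent to $\C$ at $y$, and both segments are tangent, which is the grazing conclusion.

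\emph{Step 3: the final assertion.} A reflection means $e_-\ne e_+$ with equal angles to $T_y\C$, and Step 1 rules out every corner at $y$. The statement about where direction changes can occur is only a pointer to Theorem~\ref{thm:structure}, so nothing more needs proof here.

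\emph{Main obstacle.} The delicate point is the admissibility of the shortcut. One must check that the chord does not cross $\C$ and that, in $\widehat\Omega$, it joins the same lifted endpoints. Both follow from the triangle hypothesis, which is precisely why that hypothesis is imposed: without it, a piece of the crack inside $\Delta$ could make the corner genuinely optimal, as in bending around a tip.
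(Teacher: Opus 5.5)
Your proposal is correct and follows the paper's proof essentially step for step: if $e_-\ne e_+$, the chord $[a,b]$ inside the triangle is a strictly shorter admissible competitor, and once the corner is ruled out, Theorem~\ref{thm:tangential} gives tangency at the now-existing derivative. Your separate handling of the backtrack $e_+=-e_-$ and your check that the chord lies in $\Omega$ fill in points the paper passes over; its parenthetical claim that $a,y,b$ are non-collinear whenever $e_{\mathrm{in}}\ne e_{\mathrm{out}}$ overlooks exactly the backtrack case, though the chord argument covers that case anyway.
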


\begin{proof}
Write $e_{\mathrm{in}}:=(y-a)/|y-a|$ for the arrival direction and $e_{\mathrm{out}}:=(b-y)/|b-y|$ for the departure direction, and suppose $e_{\mathrm{in}}\ne e_{\mathrm{out}}$.
The closed triangle with vertices $a,y,b$ meets $\C$ only at $y$ by hypothesis, and the chord $[a,b]$ lies in that triangle and avoids $y$ (the three points not being collinear, since $e_{\mathrm{in}}\ne e_{\mathrm{out}}$). Hence $[a,b]$ meets $\C$ nowhere, lies in $\Omega$, and is admissible; and
\[
|b-a|<|y-a|+|b-y|
\]
strictly, by the triangle inequality. Replacing the sub-arc $a\to y\to b$ of $\gamma$ by $[a,b]$ therefore produces a strictly shorter admissible trajectory with the same endpoints, contradicting the optimality of $\gamma$.
Hence $e_{\mathrm{in}}=e_{\mathrm{out}}$, so $\dot\gamma(t_0)$ exists at the contact time, and Theorem~\ref{thm:tangential} gives $\dot\gamma(t_0)\cdot\nC(y)=0$: the common direction is tangent to $\C$.

The triangle hypothesis is in fact automatic, and it is worth recording why, since the two cases are governed by opposite curvature signs.

If the contact is \emph{transversal}, $e_{\mathrm{in}}\cdot\nC(y)>0$, the two segments leave $\C$ at a positive angle, so the triangle lies on the near side near $y$ and meets $\C$ only at $y$ once $|y-a|$ and $|b-y|$ are small.

If the contact is \emph{tangential}, the segments are tangent to $\C$ at $y$, and admissibility already forces the crack to lie locally on the \emph{far} side of that tangent line: were it on the near side, the segments --- which lie on the near side --- would cross it, contrary to the hypothesis that they meet $\C$ only at $y$. In the plane, writing $\kappa$ for the curvature in the convention of Theorem~\ref{thm:structure}, the crack is displaced from its tangent line by $\tfrac12 s^2\kappa\,\nC(y)+o(s^2)$ in the arc length $s$, so this says $\kappa(y)\ge0$, with $\kappa(y)>0$ giving the displacement at second order. The triangle, contained in the near side of the tangent line up to the same order, therefore again meets $\C$ only at $y$.

Thus a tangential contact at a point with $\kappa<0$ simply cannot occur, and the configuration one might have feared --- a corner that no chord competitor can remove --- is empty.
\end{proof}

\section{Free-boundary structure on the crack}\label{sec:freeboundary}

We now analyse how optimal trajectories interact with $\C$. Throughout this section $u$ denotes the value function of Theorem~\ref{thm:main}, and we work at regular points of $\C$, using the one-sided traces of $u$ on the slit completion $\widehat\Omega$ (Remark~\ref{rem:slit}). Everything below is local to one lip: we fix a lip, write $\nC$ for its outward normal and $u|_\C$ for the trace of $u$ on it --- which is $1$-Lipschitz for the intrinsic distance of each connected component of $\C_{\mathrm{reg}}$, by the crack estimate recorded at the end of the proof of Lemma~\ref{lem:subopt} --- and the whole trichotomy is read on that lip. The other lip of the same sheet carries its own copy of the analysis, with its own sliding and illuminated sets, and the two are unrelated (Remark~\ref{rem:lips}).

\subsection{The sliding set, the illuminated set, and the free boundary}

\begin{definition}[Crack regimes]\label{def:regimes}
Write $\widehat\C$ for the set of points of $\pi^{-1}(\C_{\mathrm{reg}})$ at which $u$ is differentiable up to the lip, a subset of $\widehat\Omega$ on which $\taugrad u$ and $\nderiv u$ are both defined (Lemma~\ref{lem:normalsign}); a regular crack point contributes at most two elements of $\widehat\C$, and may well contribute only one: differentiability up to one lip says nothing about the other. The sets below are subsets of $\widehat\C$, so each lip carries its own and the two are unrelated. Define
\begin{align*}
\Sigma & :=\{\hat y\in\widehat\C:|\taugrad u(\hat y)|=1\} \quad(\text{\emph{sliding set}}), \\
I      & :=\{\hat y\in\widehat\C:|\taugrad u(\hat y)|<1\} \quad(\text{\emph{illuminated set}}),
\end{align*}
and the \emph{free boundary}
\[
\Gamma:=\partial_{\widehat\C}\Sigma
\]
as the boundary of $\Sigma$ relative to $\widehat\C$.
\end{definition}

Since $u|_\C$ is $1$-Lipschitz intrinsically, $|\taugrad u|\le1$ holds wherever defined, so $\{\Sigma,I\}$ \emph{partitions} $\widehat\C$. The free boundary $\Gamma$ is not a third piece but the interface along which $I$ abuts $\Sigma$. We do \emph{not} claim $\Gamma\subset\Sigma$: that would require $\taugrad u$ to be continuous on $\widehat\C$, which is not part of the definition --- for a merely differentiable $1$-Lipschitz trace the set $\{|\taugrad u|=1\}$ need not be relatively closed. The statements below that are made at free-boundary points are therefore made at points of $\Gamma\cap\Sigma$, which is where they are used; in the worked example of Section~\ref{sec:freeboundary} the trace is real-analytic and $\Gamma\subset\Sigma$ holds outright. Points of $\C_{\mathrm{reg}}$ at which $u$ fails to be differentiable up to the lip lie in none of the three, and neither does $\C_{\mathrm{sing}}$. The names reflect the geometry made precise below: on $I$ the crack point is reached by a bulk ray arriving transversally (``illuminated'' by the source), while on $\Sigma$ the optimal trajectory runs \emph{along} the crack (``sliding'').

We now characterize the sliding regime.

\begin{proposition}\label{prop:sliding}
On $\Sigma$ the trace satisfies the intrinsic eikonal equation $|\nabla_{\!\C}(u|_\C)|=1$ pointwise, by the definition of $\Sigma$. Moreover, for $y\in\Sigma$ let $\gamma$ be an optimal trajectory reaching $y$ whose contact set $\{t:\gamma(t)\in\C\}$ is a finite union of points and intervals --- the tameness hypothesis already used in Theorem~\ref{thm:structure}(c), and not derived here. Then $\gamma$ falls into exactly one of two cases:
\begin{enumerate}
\item[\emph{(S)}] its terminal arc lies in $\C_{\mathrm{reg}}$ over a nondegenerate interval, and is then a unit-speed intrinsic geodesic of $\C$ \emph{(sliding)};
\item[\emph{(G)}] its terminal arc lies in the bulk and arrives \emph{tangentially}, $\nderiv u(y)=0$ \emph{(grazing)}.
\end{enumerate}
In particular no optimal trajectory whatever --- tame or not --- reaches a point of $\Sigma$ transversally; that part of the statement uses only Lemma~\ref{lem:normalsign}.
\end{proposition}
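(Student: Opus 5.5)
The first assertion needs no argument beyond the identity $\nabla_{\!\C}(u|_\C)=\taugrad u$ recorded in the Preliminaries together with the definition of $\Sigma$. The substance is the dichotomy and the exclusion of transversal arrival. I would prove the last claim first, since it is pure Lemma~\ref{lem:normalsign}. At $\hat y\in\Sigma$ we have $|\nabla u(\hat y)|=1$ and $|\taugrad u(\hat y)|=1$, hence $\nderiv u(\hat y)=0$.

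Next, the proof of Lemma~\ref{lem:normalsign} identifies $\nabla u(\hat y)$ as the limit $d$ of the chord directions $d_h=(y-\gamma(T-h))/\rho_h$ of \emph{any} minimizing geodesic ending at $\hat y$. It does so through Lemma~\ref{lem:DPP}(b) and equality in Cauchy--Schwarz. Since every limit point of $(d_h)$ must equal $\nabla u(\hat y)$, the whole family converges. So every optimal trajectory has a terminal direction, in the chord sense, equal to $\nabla u(\hat y)$, which is tangent to $\C$ at $y$. This rules out transversal arrival for every optimal trajectory, tame or not.

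For the dichotomy I use tameness. The contact set is a finite union of points and intervals, so for some $\delta>0$ the restriction $\gamma|_{[T-\delta,T)}$ either lies entirely in $\C$ or entirely off $\C$. In the first case, shrinking $\delta$ and using that $\C_{\mathrm{reg}}$ is relatively open in $\C$ near $y$, the arc lies in $\C_{\mathrm{reg}}$. In the second case it lies in the bulk. The two cases are mutually exclusive because the contact set near $T$ is either a nondegenerate interval ending at $T$ or just $\{T\}$.

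In case (S), Theorem~\ref{thm:tangential} gives that the velocity is tangent a.e. Since $\gamma$ is parametrized by arc length, the arc is a unit-speed curve in $\C$. Its Euclidean length equals its intrinsic length on the sheet. Any intrinsically shorter curve on that component of $\C_{\mathrm{reg}}$ joining the same endpoints is itself admissible along the lip (Remark~\ref{rem:lift}). Substituting it would give a strictly shorter admissible trajectory, contradicting optimality. Hence the terminal arc is locally intrinsically length-minimizing, that is, an intrinsic geodesic.

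In case (G), the terminal arc lies in $\Omega\setminus\C$ near $y$. A minimizer of a length space restricted to an open Euclidean region is locally a straight segment, so the terminal arc is a segment $[a,y]$. Its direction is the terminal direction computed above, namely $\nabla u(\hat y)$, which satisfies $\nderiv u(\hat y)=0$. So the arrival is tangential.

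The main obstacle is the identification of the terminal direction of an arbitrary optimal $\gamma$ with $\nabla u(\hat y)$. Lemma~\ref{lem:normalsign} is phrased through a particular minimizing geodesic, so I would rerun its argument verbatim with the given $\gamma$; the argument used nothing beyond minimality. Some care is also needed with the chord-versus-velocity distinction in case (G). There, however, the arc is a straight segment, so the chord direction is exactly the velocity and no subtlety remains.
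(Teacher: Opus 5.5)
Your proposal is correct and follows essentially the paper's route. Lemma~\ref{lem:normalsign} together with $|\taugrad u|=1$ forces $\nderiv u=0$, the terminal direction of every optimal trajectory is identified with $\nabla u(\hat y)$, and tameness plus the closedness of $\C_{\mathrm{sing}}$ splits the terminal arc into the sliding case (Theorem~\ref{thm:tangential} plus minimality) and the bulk grazing case. The only difference is cosmetic: you identify the terminal direction through the chord directions of Lemma~\ref{lem:DPP}(b), which gives a direction rather than the one-sided velocity that the paper gets from the difference quotients of Proposition~\ref{prop:grazing}, and you correctly compensate for this by showing that the terminal arc in case (G) is a straight segment.
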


\begin{proof}
The identity $|\nabla_{\!\C}(u|_\C)|=|\taugrad u|=1$ on $\Sigma$ is the definition of $\Sigma$ together with $\nabla_{\!\C}(u|_\C)=\taugrad u$.

Let $y\in\Sigma$ and let $\gamma$ be optimal to $y$. By Lemma~\ref{lem:normalsign}, $|\nabla u(y)|=1$ and $\nderiv u(y)\ge0$; with $|\taugrad u(y)|=1$ this forces $\nderiv u(y)=0$, so $\nabla u(y)$ is tangent to $\C$. As in the proof of Proposition~\ref{prop:grazing}, $u(\gamma(t))=t$ gives $\dot\gamma=\nabla u$ at every time of differentiability, so the terminal velocity of $\gamma$ at $y$ is tangent to $\C$: the arrival is never transversal, which is the last assertion.

If the terminal arc lies in the bulk we are in case~(G). Otherwise the contact set accumulates at the terminal time, and by the tameness hypothesis it then contains a nondegenerate interval ending at that time, so the arc lies in $\C$ over such an interval and, by Theorem~\ref{thm:tangential}, with tangential velocity. Without tameness this step fails: tangential velocity at each of a sequence of contact times does not place an interval of the trajectory inside $\C$, and the contact set could in principle be a Cantor set. It cannot meet $\C_{\mathrm{sing}}$ at times accumulating at the terminal time: $\C_{\mathrm{sing}}$ is closed, so such times $t_k\uparrow T$ would give $\gamma(t_k)\to y$ with $\gamma(t_k)\in\C_{\mathrm{sing}}$, forcing $y\in\C_{\mathrm{sing}}$ and contradicting $y\in\Sigma\subset\widehat\C$, whose points lie over $\C_{\mathrm{reg}}$. Hence some terminal sub-interval of contacts lies entirely in $\C_{\mathrm{reg}}$. Hence the terminal arc lies in $\C_{\mathrm{reg}}$ and is an admissible curve within it which, being a sub-arc of a length minimiser, is a unit-speed intrinsic geodesic: case~(S).

The transversality exclusion is proved first below and needs no tameness. We emphasise also that (G) cannot be discarded. A tangentially arriving bulk ray gives $|\taugrad u(y)|=1$, hence $y\in\Sigma$, and nothing proved here excludes such $y$ from the relative interior of $\Sigma$; the dichotomy is the correct statement, and it is exactly the pairing of creeping rays with shadow-boundary rays in the geometric theory of diffraction (Remark~\ref{rem:GTD}).
\end{proof}

We next show that the optimal ray grazes the crack at the free boundary.

\begin{proposition}\label{prop:grazing}
Let $y_0\in\Sigma$; in particular $y_0$ may be a free-boundary point at which $|\taugrad u|=1$, i.e.\ a point of $\Gamma\cap\Sigma$. Then
\[
\nderiv u(y_0)=0 ,
\]
that is, $\nabla u(y_0)$ is tangent to $\C$: every optimal trajectory through $y_0$ meets and leaves the crack tangentially, the incidence angle being $\theta=\pi/2$. If moreover $u|_\C$ is of class $C^1$ near $y_0$, then $|\taugrad u|$ is continuous across $\Gamma$ and equals $1$ there, so that the incidence angle tends to $\pi/2$ as $y\to y_0$ from the illuminated side.
\end{proposition}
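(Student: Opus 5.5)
The first assertion is pointwise and follows from Lemma~\ref{lem:normalsign}. Since $y_0\in\Sigma\subset\widehat\C$, the value function is differentiable at $y_0$ up to the lip, and $0<u(y_0)<\infty$ because $y_0\neq x^*$ and $y_0$ lies in the component of the source. That lemma then gives $|\nabla u(y_0)|=1$ and $\nderiv u(y_0)\ge0$. Decomposing orthogonally, $1=|\nabla u(y_0)|^2=|\taugrad u(y_0)|^2+(\nderiv u(y_0))^2$. The defining property $|\taugrad u(y_0)|=1$ of $\Sigma$ therefore forces $\nderiv u(y_0)=0$, so $\nabla u(y_0)\in T_{y_0}\C$.

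To turn this into the statement about trajectories, I will take an optimal trajectory $\gamma$ through $y_0$, parametrized by arc length from $x^*$. Along $\gamma$ we have $u(\gamma(t))=t$, since every sub-arc of a minimizer is a minimizer. At the time $t_0$ with $\gamma(t_0)=y_0$, I differentiate this identity using differentiability of $u$ up to the lip. The predecessors $\gamma(t)$, $t<t_0$, lie in $\overline{K(y_0)}$, so the expansion is legitimate and gives $\dot\gamma(t_0^-)\cdot\nabla u(y_0)=1$ for the one-sided terminal velocity. Both vectors are unit vectors, so equality in Cauchy--Schwarz gives $\dot\gamma(t_0^-)=\nabla u(y_0)$, which is tangent to $\C$; the arrival is thus tangential. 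If $t_0$ is interior, Theorem~\ref{thm:tangential} already gives tangency of $\dot\gamma(t_0)$ whenever it exists. For the departure side I will use the reversed geodesic together with the fact that the incoming direction is forced by $\nabla u$. In the planar straight-segment setting, Corollary~\ref{cor:nocorner} also excludes a corner at $y_0$, so the outgoing direction coincides with the incoming one. The incidence angle is $\theta=\pi/2$ in the convention of Remark~\ref{rem:strongsuper}, because $\nderiv u=\cos\theta$ on the near-side definition used there.

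For the continuity clause I assume $u|_\C\in C^1$ near $y_0$. Then $\taugrad u=\nabla_{\!\C}(u|_\C)$ is continuous on a relative neighborhood of $y_0$ in the lip, so $|\taugrad u|$ is continuous there and $|\taugrad u(y)|\to|\taugrad u(y_0)|=1$ as $y\to y_0$, including along illuminated points. At points $y\in I$ near $y_0$ where $u$ is differentiable up to the lip, Lemma~\ref{lem:normalsign} gives $\nderiv u(y)=\sqrt{1-|\taugrad u(y)|^2}\to0$, hence $\sin\theta(y)=|\taugrad u(y)|\to1$ and $\theta(y)\to\pi/2$.

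The step I expect to be delicate is justifying the one-sided differentiation along $\gamma$ at a lip. Differentiability ``up to the lip'' must be applied at points $\gamma(t)$ approaching $y_0$ within $\overline{K(y_0)}$, and $\gamma$ need not be differentiable at $t_0$. I will handle this by working with difference quotients $(y_0-\gamma(t))/|y_0-\gamma(t)|$, exactly as in the proof of Lemma~\ref{lem:normalsign}. Every limit point $d$ of these quotients satisfies $d\cdot\nabla u(y_0)\ge1$, hence $d=\nabla u(y_0)$, and this yields the arrival direction without assuming $\dot\gamma(t_0)$ exists.
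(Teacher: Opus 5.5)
Your proof of $\nderiv u(y_0)=0$, of the arrival direction, and of the $C^1$ continuity clause is sound and follows the paper. The paper uses time-normalized quotients $(\gamma(t_0\pm h)-\gamma(t_0))/h$ with $|d|\le1$ rather than chord-normalized ones. That spares the check that $\pi\gamma(t)\neq y_0$, but the difference is cosmetic.

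The gap is the departure half of ``meets and leaves the crack tangentially.'' None of the tools you cite delivers it in the generality of the statement:
\begin{itemize}
\item Theorem~\ref{thm:tangential} presupposes that $\dot\gamma(t_0)$ exists, which is exactly what is in question.
\item Corollary~\ref{cor:nocorner} needs $n=2$, an isolated contact, and straight adjacent segments.
\item The reversed geodesic is a minimizer from the far endpoint $x$. Its arrival direction at $y_0$ is governed by the possibly nonexistent gradient of $\dA(x,\cdot)$, not by $\nabla u(y_0)$. Knowing the incoming direction of $\gamma$ says nothing about its outgoing direction without a separate no-corner argument.
\end{itemize}

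The missing step is to run your own argument forward in time. As $h\downarrow0$, the continuation $\gamma(t_0+h)$ stays in $\overline{K(y_0)}$ by continuity of the lift in $\widehat\Omega$. Since $u(\gamma(t))=t$ along the whole optimal trajectory, you have $u(\gamma(t_0+h))-u(y_0)=h$, and also $|\gamma(t_0+h)-y_0|\le h$. Differentiability up to the lip then gives $d\cdot\nabla u(y_0)=1$ for every limit point $d$ of $(\gamma(t_0+h)-y_0)/h$, and $|d|\le1$ forces $d=\nabla u(y_0)$.

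This is the paper's argument. It shows that both one-sided velocities exist and equal the unit tangent vector $\nabla u(y_0)$, so $\dot\gamma(t_0)$ itself exists. No planarity or straight-segment hypothesis is needed.
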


\begin{proof}
By Lemma~\ref{lem:normalsign}, $|\nabla u(y_0)|=1$ and $\nderiv u(y_0)\ge0$. Decomposing $\nabla u(y_0)$ into tangential and normal parts,
\[
1=|\nabla u(y_0)|^2=|\taugrad u(y_0)|^2+\bigl(\nderiv u(y_0)\bigr)^2=1+\bigl(\nderiv u(y_0)\bigr)^2 ,
\]
whence $\nderiv u(y_0)=0$ and $\nabla u(y_0)=\taugrad u(y_0)\in T_{y_0}\C$ is a unit tangent vector. It remains to identify this vector with the direction of travel, in both time directions. Let $\gamma$ be any optimal trajectory passing through $y_0$ at time $t_0$, parametrized by arc length from $x^*$; then $u(\gamma(t))=t$ for every $t$, since each sub-arc is optimal. Let $d$ be any limit point of the difference quotients $(\gamma(t_0+h)-\gamma(t_0))/h$ as $h\to0^+$; such limit points exist and satisfy $|d|\le1$, $\gamma$ being $1$-Lipschitz in arc length. From $u(\gamma(t_0+h))-u(\gamma(t_0))=h$ and the differentiability of $u$ at $y_0$ up to the lip we get $d\cdot\nabla u(y_0)=1$, and with $|\nabla u(y_0)|=1$ and $|d|\le1$ Cauchy--Schwarz forces $d=\nabla u(y_0)$. The same argument with $h\to0^-$ gives the arrival direction. Hence both the terminal direction of $\gamma$ at $y_0$ and the initial direction of its continuation exist and equal $\nabla u(y_0)$, hence are tangent to $\C$. This is the grazing statement, for \emph{every} optimal trajectory through $y_0$, and the incidence angle $\theta$ defined by $\nderiv u=\cos\theta$ equals $\pi/2$.

The last assertion is then immediate: on $\Sigma$ one has $|\taugrad u|=1$ by definition, at $y_0$ one has $|\taugrad u(y_0)|=1$, and if $u|_\C\in C^1$ near $y_0$ the function $|\taugrad u|$ is continuous there, so its value $\sin\theta(y)$ at illuminated $y$ tends to $1$.

Observe that no limiting argument over nearby isolated contacts is needed, and in particular the proposition does not require $y_0$ to be an isolated contact point. Observe also that the hypothesis $y_0\in\Gamma$ is not used: the argument applies verbatim at every point of $\Sigma$, and we have stated it at $\Gamma$ only because that is where it is applied.
\end{proof}

Finally, we address the regularity of the free boundary.

\begin{proposition}\label{prop:regularity}
Work in a $C^1$ chart in which a neighborhood $W$ of $y_0$ in $\C$ is the graph of a $C^1$ function over an open set of $\mathbb R^{n-1}$, and transport $\Sigma,\Gamma$ to that chart; write points of the chart as $z+te$ with $e$ a fixed unit vector and $z$ in the hyperplane $e^\perp$. Assume $u|_\C\in C^1(W)$ and set $g:=1-|\taugrad u|$, a continuous nonnegative function on $W$, so that $\Sigma\cap W=\{g=0\}$ is relatively closed in $W$ and $\Gamma\cap W\subset\Sigma$. Assume further:
\begin{enumerate}
\item[\emph{(H1)}] \emph{(subgraph)} if $z+te\in\Sigma\cap W$ and $s<t$ with $z+se\in W$, then $z+se\in\Sigma$;
\item[\emph{(H2)}] \emph{(uniform exit rate)} there are $c>0$, $t_0>0$ and $q\ge1$ with
\begin{equation}\label{eq:nondeg}
g(y+te)\ \ge\ c\,t^{\,q}
\qquad\text{for every } y\in\Gamma\cap W \text{ and every } t\in(0,t_0) \text{ with } y+te\in W .
\end{equation}
\end{enumerate}
Then $\Gamma\cap W$ is the graph of a function $\tau$ over the projection $Z\subset e^\perp$ of $\Sigma\cap W$, and $\tau$ is continuous on $Z$.
\end{proposition}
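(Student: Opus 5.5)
The plan is to define $\tau$ directly as a supremum along each fiber and then read the graph property and continuity off (H1) and (H2).

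\emph{Step 1: definition of $\tau$.} For $z\in Z$ the fiber $F_z:=\{t: z+te\in\Sigma\cap W\}$ is non-empty. I set
\[
\tau(z):=\sup F_z .
\]
By (H1), $F_z$ is an initial segment of the fiber $\{t:z+te\in W\}$. If the chart is taken so that $W$ is convex in the direction $e$ (shrink $W$ to a box), each such fiber is an interval and $F_z$ is an initial subinterval. Since $\Sigma\cap W=\{g=0\}$ is relatively closed and $g$ is continuous, $z+\tau(z)e\in\Sigma$ whenever it lies in $W$.

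\emph{Step 2: the graph property.} I will show that $\Gamma\cap W$ is exactly $\{z+\tau(z)e: z\in Z\}$.

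For the inclusion ``$\supseteq$'': points $z+te$ with $t>\tau(z)$ have $g>0$, so they lie in $I$. Hence $z+\tau(z)e$ is a limit of points of $I$ and lies in $\Sigma$, so it belongs to $\Gamma$.

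For ``$\subseteq$'': take $y=z+te\in\Gamma\cap W$. We have $y\in\Sigma$ because $\Gamma\subset\Sigma$, so $t\le\tau(z)$. Apply (H2) at $y$: it gives $g(y+se)\ge cs^q>0$ for $0<s<t_0$. Hence $y+se\notin\Sigma$ for small $s>0$. Combined with the initial-segment structure from (H1), this forces $t=\tau(z)$. So every fiber meets $\Gamma$ in exactly one point.

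I also need each point of $Z$ to carry a point of $\Gamma$ inside $W$. Here I will either use that $\tau(z)$ is attained inside $W$, after shrinking $W$, or note that the statement implicitly projects only the $\Gamma$-relevant part. This is a bookkeeping point and I will handle it by choosing $W$ as a relatively compact box.

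\emph{Step 3: continuity of $\tau$.} I treat the two semicontinuities separately.

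Upper semicontinuity: take $z_k\to z$ with $\tau(z_k)\to\bar t$. Since $\Sigma$ is relatively closed, $z+\bar te\in\Sigma$, so $\bar t\le\tau(z)$.

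Lower semicontinuity is the main obstacle, and this is where (H2) does the work. Suppose $\tau(z_k)\le\tau(z)-\delta$ along a sequence $z_k\to z$. I will exploit uniform continuity of $g$ on a compact neighborhood. At the free-boundary point $y_k=z_k+\tau(z_k)e$, (H2) gives $g(y_k+se)\ge cs^q$ for $s\in(0,t_0)$. Choose $s=\min(\delta,t_0)/2$. Then $y_k+se=z_k+(\tau(z_k)+s)e$ with $\tau(z_k)+s<\tau(z)$, so, passing to a subsequence with $\tau(z_k)\to\bar t$, the limit point $z+(\bar t+s)e$ lies in $\Sigma$ by (H1), because $\bar t+s\le\tau(z)-\delta/2$. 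Continuity of $g$ then gives $0=g(z+(\bar t+s)e)\ge cs^q>0$, a contradiction.

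The point to check carefully is that the uniformity of $c,t_0,q$ over $\Gamma\cap W$ is exactly what makes this limiting argument work. Without it the rate could degenerate along the sequence $y_k$. I also need to confirm that the points $y_k+se$ stay in $W$, which again holds by shrinking $W$ to a box.
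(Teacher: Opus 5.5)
Your proposal is correct and is essentially the paper's proof. It uses the same fiberwise supremum $\tau$, the same combination of (H1) and (H2) to force each point of $\Gamma\cap W$ to the top of its fiber, upper semicontinuity from the relative closedness of $\Sigma$, and lower semicontinuity by contradicting (H2) through the continuity of $g$. The one difference is cosmetic: you fix the increment $s=\min(\delta,t_0)/2$ and let the evaluation point $z_k+(\tau(z_k)+s)e$ move, whereas the paper fixes the point $z+t_1e$ and lets the increment $s_k\to s$ vary; the membership-in-$W$ issues you flag (which shrinking to a box does not fully settle when $\Sigma$ fills an entire fiber) are left tacit in the paper as well.
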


\begin{proof}
By (H1), for each $z$ the slice $\{t:z+te\in\Sigma\cap W\}$ is a sub-interval of the slice of $W$, so
\[
\tau(z):=\sup\{t: z+te\in\Sigma\cap W\}
\]
is well defined and finite on $Z$. It is the graph of $\tau$ that $\Gamma\cap W$ coincides with, and this uses both hypotheses: a point $y=z+te$ of $\Gamma\cap W$ with $t<\tau(z)$ would, by (H1), have a whole segment $y+se\subset\Sigma$ for all small $s>0$, so $g(y+se)=0$, contradicting \eqref{eq:nondeg}; and $t>\tau(z)$ is excluded because $\Gamma\cap W\subset\Sigma$. Conversely $z+\tau(z)e$ lies in $\Sigma$ by relative closedness and is not relatively interior to it, again by \eqref{eq:nondeg}. (Hypothesis (H1) alone would not suffice here: it permits relative-boundary points lying strictly below the top of their slice, and it is (H2) that rules them out.) We do not claim $Z$ is open; it is the projection of $\Sigma\cap W$ and need not be.

Continuity of $\tau$: upper semicontinuity follows from (H1) and the relative closedness of $\Sigma$ in $W$, since $z_k\to z$ with $\tau(z_k)\to\bar t$ gives $z+\bar te\in\Sigma$, whence $\bar t\le\tau(z)$. For lower semicontinuity, suppose $\tau(z_k)\to\bar t<\tau(z)$ along a subsequence. Fix once and for all a \emph{small} increment
\[
s\ \in\ \bigl(0,\ \min\{t_0,\ \tau(z)-\bar t\}\bigr),
\]
with $t_0$ the threshold of (H2), and put $t_1:=\bar t+s$. Then $t_1<\tau(z)$, so $z+t_1e\in\Sigma$ by (H1) and $g(z+t_1e)=0$. On the other hand $z_k+t_1e=y_k+s_ke$ with $y_k:=z_k+\tau(z_k)e\in\Gamma\cap W$ and $s_k:=t_1-\tau(z_k)\to s$; for $k$ large, $s_k\in(s/2,t_0)$ and $z_k+t_1e\in W$, so \eqref{eq:nondeg} applies at the admissible increment $s_k$ and gives
\[
g(z_k+t_1e)\ \ge\ c\,s_k^{\,q}\ \ge\ \frac{c}{2^q}\,s^{\,q}\ >\ 0 .
\]
Letting $k\to\infty$ and using the continuity of $g$ contradicts $g(z+t_1e)=0$. Note that the increment at which (H2) is invoked is $s_k\to s$, which we are free to take as small as we please; invoking (H2) at $\tfrac12(\tau(z)-\bar t)$ instead would not be legitimate, that quantity being fixed by the sequence rather than by us. Both semicontinuities are proved for sequences inside $Z$, which is all that is asserted.
\end{proof}

Two remarks on the hypotheses. Because $g\ge0$ vanishes identically on $\Sigma$, every point of $\Sigma$ is a minimum of $g$, so $\nabla_{\!\C}g=0$ wherever it exists on $\Sigma$: no nondegeneracy phrased as a non-vanishing gradient can hold at a point of $\Gamma$, the implicit function theorem is unavailable, and no $C^1$ regularity of $\Gamma$ follows. This is why \eqref{eq:nondeg} is one-sided and carries a free exponent $q\ge1$, which is genuinely larger than $1$ in the model case, as the next remark computes. Hypothesis~(H1) is a genuine assumption and not a consequence of~(H2): \eqref{eq:nondeg} forbids immediate re-entry into $\Sigma$ but not re-entry further along the ray.

\begin{remark}[The order of vanishing at the free boundary]\label{rem:semiconcave}
At $\Gamma$ the degeneracy can be computed exactly in the circular-arc example of Section~\ref{sec:freeboundary}. With $x^*$ at distance $D>R$ from the centre, the lit trace is $u|_\C(\varphi)=\sqrt{D^2+R^2-2DR\cos\varphi}=:u$ and $|\taugrad u|=D\sin\varphi/u$, which equals $1$ exactly at $\cos\varphi_T=R/D$, where $u=\sqrt{D^2-R^2}=\ell$. Differentiating,
\[
\frac{d}{d\varphi}\,|\taugrad u|
=\frac{D\bigl(u^2\cos\varphi-DR\sin^2\varphi\bigr)}{u^3},
\]
and substituting $\cos\varphi_T=R/D$, $\sin^2\varphi_T=\ell^2/D^2$, $u=\ell$ makes the bracket vanish. Hence $g=1-|\taugrad u|$ vanishes to \emph{second} order on the illuminated side,
\[
g(\alpha)\ \sim\ \frac{\alpha^2}{2R^2}
\]
in the arc length $\alpha$ from $T$; for $R=1$, $D=2$ this is confirmed numerically, $g/\alpha^2\to0.500$. So $q=2$ in \eqref{eq:nondeg}, and $g$ does not grow linearly at $\Gamma$. The quadratic vanishing is the crack analogue of the square-root behaviour at a shadow boundary in geometric diffraction (see Remark~\ref{rem:GTD}): the free boundary is visible in $g$ only at second order, so no first-order criterion can locate it, and this is what Proposition~\ref{prop:regularity} is built to accommodate.
\end{remark}

\subsection{Reconciliation with the crack condition}

\begin{remark}[The crack condition is a variational inequality]\label{rem:reconcile}Definition~\ref{def:regimes} clarifies the precise scope of the crack line of \eqref{eq:HJVI}. On the sliding set $\Sigma$ the tangential term is active, $|\taugrad u|=1$; by Lemma~\ref{lem:normalsign} one then has $(\nderiv u)^2=|\nabla u|^2-|\taugrad u|^2=0$, so $\nderiv u=0$ and $\max(|\taugrad u|-1,-\nderiv u)=0$ holds with both terms vanishing. On the illuminated set $I$, however, the optimal trajectory arrives transversally and $u$ solves the \emph{bulk} eikonal $|\nabla u|=1$ up to the near side, with $|\taugrad u|=\sin\theta<1$ and $\nderiv u=\cos\theta>0$; there \emph{both} arguments of the maximum are strictly negative, so the pointwise identity $\max(\cdot)=0$ does \emph{not} hold. What survives globally are the two \emph{inequalities}
\[
|\taugrad u|\le1,\qquad \nderiv u\ge0 ,
\]
whose complementarity product $(1-|\taugrad u|)\,\nderiv u$ vanishes on $\Sigma$ and is positive on $I$. We stress that these two inequalities are not by themselves a characterization of $u$ --- any constant satisfies them --- and that a system whose complementarity product is allowed to be positive is not a complementarity system in the usual sense. The characterizing conditions are those of Definitions~\ref{def:sub} and~\ref{def:super}, as Theorem~\ref{thm:main} shows; the max-form equation \eqref{eq:VI} encodes only the binding regime $\overline\Sigma$, and the inequalities above are what is left of it elsewhere. One should add that even on $\Sigma$ the two arguments of the maximum vanish together, by Lemma~\ref{lem:normalsign}, so \eqref{eq:VI} is there a single scalar condition rather than a genuine alternative.

This bears directly on the Bellman inequality \eqref{eq:super-bellman} of Proposition~\ref{prop:metric-implies-bellman}, which uses $A^{\mathrm{arr}}$ and not $A$. At a lip point where $u$ is differentiable up to the lip one may take $\nabla\varphi=\nabla u$ in \eqref{eq:sup-explicit}; illuminated points included, $\nderiv u=\cos\theta>0$ puts us in the first case and the supremum equals $|\nabla u|=1$. What fails on $I$ is only the \emph{max-form} equation, and only as an equation. What would fail if one replaced $A^{\mathrm{arr}}$ by the velocity set $A$ is the supersolution property itself, the supremum then reading $|\taugrad u|=\sin\theta<1$. The arrival set is thus not a presentational choice but the one that makes the state-constraint condition of Soner \cite{Soner1986} correct at a crack point.
\end{remark}

\subsection{A worked example: circular-arc crack}

We illustrate the three regimes on an exactly solvable configuration exhibiting genuine sliding. Let $\C$ be an arc of the circle of radius $R$ centred at $O$ and let the source $x^*$ lie outside the disk, $|x^*-O|>R$. The arc has two free ends, so both of its sides are reachable; the lip relevant to the discussion below is the convex (outer) one, which carries $x^*$, and its far side is the disk interior. The arc does not close, so the interior is not enclosed: it is reached, at greater cost, by rounding one of the two ends, and the inner lip carries its own trace. That lip has $\kappa=-1/R<0$ and so carries no sliding, by Remark~\ref{rem:flat}, which is why the discussion below concerns the outer lip alone.
Figure~\ref{fig:regimes} shows the resulting regimes.

For a target $x$ in the geometric shadow of the arc (separated from $x^*$ by $\C$), the shortest admissible path is the classical wrap-around geodesic: the straight segment from $x^*$ tangent to the circle at a point $T$, followed by an arc of $\C$, followed by a straight segment tangent to the circle toward $x$.
Writing $\ell:=\sqrt{|x^*-O|^2-R^2}$ for the tangent length and $\alpha$ for the arc-length parameter along $\C$ measured from $T$ into the shadow, the value function on the wrapped portion of the crack is
\[
u|_\C(\alpha)=\ell+\alpha,
\qquad\text{so}\qquad
|\taugrad u|=\Bigl|\tfrac{d}{d\alpha}(\ell+\alpha)\Bigr|=1 .
\]
Thus the wrapped arc $\{\alpha>0\}$ lies in the sliding set $\Sigma$, on which $u|_\C$ solves the intrinsic eikonal, in agreement with Proposition~\ref{prop:sliding}; the formula $u|_\C=\ell+\alpha$ is valid as long as wrapping past the \emph{other} tangency point does not become cheaper, beyond which $u|_\C$ is the minimum of the two branches and fails to be differentiable at their meeting point. Since $x^*$ lies outside the disk there are two tangency points, and $\Sigma$ is the union of the two wrapped arcs minus the point at which the two branches of the minimum meet, where $u|_\C$ is not differentiable and which therefore lies outside $\widehat\C$ altogether; $\Gamma$ is the two-point set the arcs bound; by symmetry we describe one of them and write $T$ for the corresponding point of $\Gamma$. Here $u|_\C$ is real-analytic on each side of $T$ and $T$ itself belongs to $\Sigma$, so $\Gamma\cap\Sigma=\Gamma$ in this example. At $T$ (where $\alpha=0$): the incoming ray $x^*T$ is tangent to $\C$ ($\theta=\pi/2$, $|\taugrad u|=1$), as predicted by Proposition~\ref{prop:grazing}; this is case~(G) of Proposition~\ref{prop:sliding}, the wrapped arc beyond $T$ being case~(S). On the illuminated side $\{\alpha<0\}$ the arc is met by direct rays from $x^*$ at incidence angle $\theta(\alpha)<\pi/2$, so $|\taugrad u|=\sin\theta(\alpha)<1$ and the point lies in $I$; $\theta(\alpha)\uparrow\pi/2$ as $\alpha\uparrow0$, and $g=1-\sin\theta$ vanishes \emph{quadratically} there, with $q=2$ in \eqref{eq:nondeg} and constant $1/(2R^2)$ (Remark~\ref{rem:semiconcave}); Proposition~\ref{prop:regularity} then applies and $\Gamma$ reduces, on this arc, to the isolated transition point $T$.

\begin{figure}[ht]
\centering
\resizebox{\ifdim\width>\linewidth\linewidth\else\width\fi}{!}{%
\begin{tikzpicture}[scale=1.15,>=Latex]
  \def\R{1.6}
  \coordinate (O) at (0,0);
  \coordinate (S) at (-3.5,0);
  \coordinate (T)  at (117.2:\R);
  \coordinate (T') at (242.8:\R);
  \coordinate (T2) at (42:\R);
  \coordinate (X)  at (2.728,-0.639);
  \coordinate (P)  at (195:\R);
  \fill[black!7] (O) circle (\R);
  \node[black!55] at (-0.2,-0.62) {\small far side};
  \draw[very thick] (250:\R) arc[start angle=250,end angle=-70,radius=\R];
  \draw[dotted,very thick,black!75] (117.2:\R) arc[start angle=117.2,end angle=242.8,radius=\R];
  \node at (180:{\R+0.42}) {$I$};
  \node at (65:{\R+0.52}) {$\Sigma$ (sliding)};
  \node at (310:{\R+0.46}) {$\Sigma$};
  \node at (20:{\R+0.78}) {$\C$};
  \fill (O) circle (0.02) node[below right] {$O$};
  \fill[white,draw=black] (250:\R) circle (0.055);
  \fill[white,draw=black] (-70:\R) circle (0.055);
  \node at (256:{\R+0.46}) {\small tip};
  \node at (-74:{\R+0.46}) {\small tip};
  \fill (S) circle (0.045) node[left] {$x^*$};
  \draw[->] (S) -- (T) node[pos=0.60,above,sloped] {\small tangent ray};
  \fill (T) circle (0.04) node[above left=-1pt] {$T\in\Gamma$};
  \fill (T') circle (0.04) node[below left=-1pt] {$T'\in\Gamma$};
  \draw[->,dashed,black!55] (S) -- (T');
  \fill (T2) circle (0.035);
  \draw[->] (T2) -- (X) node[right] {$x$ (shadow)};
  \draw[->,gray] (S) -- (P);
  \fill[gray] (P) circle (0.03);
  \node[gray] at (203:{\R+0.78}) {\small $\theta<\tfrac\pi2$};
\end{tikzpicture}%
}
\caption{Regimes on a circular-arc crack $\C$, drawn to scale for $R=1.6$ and $|x^*-O|=3.5$. The crack is an arc of $320^\circ$ with two free tips, so both of its sides are reachable; the figure shows the outer lip. Direct rays from $x^*$ strike the illuminated part $I$ (dotted, from $117.20^\circ$ to $242.80^\circ$) transversally, where $|\taugrad u|<1$. At the two tangency points $T,T'$ the ray grazes ($\theta=\pi/2$); these form the free boundary $\Gamma$, which here is a two-point set. Beyond them the geodesic slides along $\C$ --- the sliding set $\Sigma$, the remainder of the arc, where $|\taugrad u|=1$ --- before launching a tangent ray toward shadow targets $x$.}
\label{fig:regimes}
\end{figure}

\begin{remark}[Lips of negative curvature carry nothing]\label{rem:flat}
Fix a lip and let $\kappa$ be the curvature of $\C$ taken positive when the curvature vector points to the far side of that lip, as in Theorem~\ref{thm:structure}. If $\kappa<0$ along the lip --- the crack curving strictly away from the lip's own side --- then no optimal trajectory has a sliding arc on that lip. This is not a separate assertion but the contrapositive of Theorem~\ref{thm:structure}(b): a sliding arc would carry $\kappa\ge0$. The flat case $\kappa\equiv0$ is genuinely different and is \emph{not} covered: for a straight crack with $x^*$ on the line containing it, the optimal path to a shadow point runs \emph{along} $\C$ over a nondegenerate interval, so a flat lip can carry a sliding arc. More is true on such a lip: it carries no grazing points either, so $\Sigma$ is empty there outright. Indeed a grazing arrival at $y$ is along a straight bulk segment tangent to $\C$ at $y$ (Theorem~\ref{thm:structure}(a) and Proposition~\ref{prop:grazing}); but for $\kappa(y)<0$ the crack is displaced from its tangent line toward the near side, so that segment would cross it and is not admissible. With both cases (S) and (G) of Proposition~\ref{prop:sliding} excluded, no point of such a lip lies in $\Sigma$. Sliding requires $\kappa\ge0$, the crack bulging \emph{toward} the lip's own side, so that intrinsic geodesics of $\C$ undercut bulk detours; the outer lip of the circular arc is the prototypical such configuration, while its inner lip has $\kappa=-1/R<0$ and carries no sliding. Note that the sign of $\kappa$ is opposite on the two lips of a sheet, so sliding can occur on at most one of them at any given point. 
There is accordingly a threshold in the incidence angle rather than in the curvature alone: for $\kappa>0$ and incidence angle $\theta$, sliding becomes optimal only once $\theta$ reaches $\pi/2$, after which the geodesic creeps along the lip with radius of curvature governed by $\kappa$.
\end{remark}

We close this section by assembling the preceding results into a complete description of optimal trajectories in the plane.

\begin{theorem}\label{thm:structure}
Let $n=2$, let $\Omega$ be convex, and let $\C\subset\Omega$ be a finite union of $C^2$ arcs.
For $y\in\C_{\mathrm{reg}}$, and for the lip of $y$ on which the trajectory lies, let $\kappa(y)$ denote the curvature of $\C$ at $y$ taken positive when the curvature vector points to the far side of that lip.
Let $\gamma:[0,T]\to\overline\Omega$ be an optimal trajectory from $x^*$ to $x$, parametrized by arc length.
Then:
\begin{enumerate}
\item[(a)] on each connected component of the open set $\{t\in(0,T):\gamma(t)\notin\C\}$, $\gamma$ is a straight segment;
\item[(b)] on each nondegenerate interval on which $\gamma(t)\in\C_{\mathrm{reg}}$ (a sliding arc), $\gamma$ is a unit-speed parametrization of an arc of $\C$ along which $\kappa\ge0$; in particular a lip along which $\kappa<0$ carries no sliding arc at all;
\item[(c)] $\gamma$ is of class $C^1$ at every $t$ with $\gamma(t)\in\C_{\mathrm{reg}}$ which is either an endpoint of a sliding arc or an isolated contact; consequently, \emph{for trajectories whose contact set $\{t:\gamma(t)\in\C\}$ is a finite union of points and intervals}, $\gamma$ changes direction only at tips, at junctions, and along sliding arcs. We assume that finiteness rather than deriving it: for a general $C^2$ crack the contact set is merely closed and may be a Cantor set, and we do not know whether an optimal trajectory can realize such a contact set;
\item[(d)] if $\gamma$ changes direction at a tip $p\in\partial\C$ and the sub-arcs adjacent to $p$ are straight segments, with directions $e_{\mathrm{in}}\neq\pm e_{\mathrm{out}}$ --- the excluded reversal $e_{\mathrm{out}}=-e_{\mathrm{in}}$ being impossible for an optimal trajectory --- then the unit tangent $\tau_p$ of $\C$ at $p$ pointing into the crack lies in the closed cone $\{\lambda(-e_{\mathrm{in}})+\mu e_{\mathrm{out}}:\lambda,\mu\ge0\}$, that is, the trajectory bends only around the crack;
\item[(e)] if $\gamma$ leaves a tip $p$ along a segment and has no further contact with $\C$, then $u(z)=u(p)+|z-p|$ at every point $z$ of that segment; in the region swept by such segments the level sets of $u$ are arcs of circles centred at $p$, which thus acts as a secondary source.
\end{enumerate}
\end{theorem}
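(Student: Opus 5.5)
The plan is to prove each item by a local shortcut argument: replace a short piece of $\gamma$ by an admissible competitor that is strictly shorter, which contradicts optimality. Throughout, we use that every sub-arc of an optimal trajectory is optimal between its endpoints and that $u(\gamma(t))=t$.

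\emph{Item (a).} Fix $t$ in a component $J$ of $\{\gamma\notin\C\}$. Choose a ball around $\gamma(t)$ that is disjoint from $\C$; it lies in $\Omega$ when $\gamma(t)\in\Omega$. If $\gamma(t)\in\partial\Omega$, use the convexity of $\overline\Omega$, which makes the chord between two nearby points of $\overline\Omega\setminus\C$ lie in $\overline\Omega$. Near $\gamma(t)$ the chord then avoids $\C$, so it is admissible. Every short sub-arc is therefore a segment, and so $\gamma|_J$ is locally straight, hence straight.

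\emph{Item (b).} On a sliding interval, $\gamma$ lies in $\C_{\mathrm{reg}}$ with tangential velocity by Theorem~\ref{thm:tangential}. It is unit speed, being arc-length parametrized. Suppose $\kappa(y)<0$ at some $y=\gamma(t_1)$ in the interior of the arc. The curvature vector then points to the near side. Since $\C$ is $C^2$, the chord joining $\gamma(t_1-\delta)$ and $\gamma(t_1+\delta)$ lies strictly on the near side for small $\delta$ and meets $\C$ only at its ends. It is strictly shorter than the arc, because a $C^2$ arc of nonzero curvature is longer than its chord by $\kappa^2\delta^3/24+o(\delta^3)$. It is admissible, since it stays in $\overline{K}$ of the lip and lifts continuously. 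This contradicts optimality, so $\kappa\ge0$ on the open sliding set, and the same holds at its endpoints by continuity of $\kappa$.

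\emph{Item (c).} At an isolated contact between two segments, Corollary~\ref{cor:nocorner} applies directly; its triangle hypothesis was shown to be automatic there. At an endpoint of a sliding arc, one side of $\gamma$ is a segment by (a) and the other is an arc of $\C$ with tangent $\tau$. Suppose the two one-sided directions differ. Cut the corner by the chord from $\gamma(t_0-\delta)$ to $\gamma(t_0+\delta)$. Because the segment makes a positive angle with $\C$ and $\kappa\ge0$ on the lip, the chord stays on the near side and saves order $\delta$ in length, whereas the second-order deviation of $\C$ costs only $O(\delta^2)$. This is a contradiction. The consequence for tame contact sets follows by concatenating (a), (b) and the two $C^1$ statements.

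\emph{Item (d).} This is the main obstacle. Suppose the cone condition fails. Then $\tau_p$ lies outside the convex sector spanned by $-e_{\mathrm{in}}$ and $e_{\mathrm{out}}$, which is the sector containing the small triangle $\gamma(t_0-\delta),p,\gamma(t_0+\delta)$ near $p$. I would check, using the $C^2$ expansion $\C\approx p+s\tau_p+O(s^2)$, that this triangle then meets $\C$ only at $p$ for small $\delta$. The chord is then admissible: it lies in $\Omega\setminus\C$, and near a tip no lift issue arises, since the two sides merge there. The chord is strictly shorter because $e_{\mathrm{in}}\neq e_{\mathrm{out}}$, a contradiction. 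The delicate cases are when $\tau_p$ lies on the boundary rays of the cone, which the statement allows since the cone is closed, and the reversal case. For the reversal, $e_{\mathrm{out}}=-e_{\mathrm{in}}$ would make $\gamma$ retrace itself, and cutting the loop gives a shorter curve.

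\emph{Item (e).} The segment $[p,z]$ is a sub-arc of an optimal trajectory, so $u(z)=u(p)+|z-p|$ holds exactly. In the open region swept by such segments, $u-u(p)$ therefore equals $|z-p|$, and its level sets are circles centred at $p$.
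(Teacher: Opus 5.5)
Your proposal is correct and follows the paper's proof essentially item by item. Items (a)--(d) are each proved by a local chord-shortcut contradiction: the bulk segment in (a), the near-side chord on a lip with $\kappa<0$ in (b), the corner-cutting chord at a sliding endpoint in (c), and the tip triangle lying in the cone while the sheet leaves $p$ along $\tau_p$ outside it in (d). Item (e) is sub-arc optimality at the tip.

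The differences are only cosmetic. For your arc of length $2\delta$ the arc-minus-chord excess is $\kappa^2\delta^3/3$, not $\kappa^2\delta^3/24$, though only its positivity matters. The boundary rays of the cone in (d) need no separate treatment: the contrapositive assumes $\tau_p$ lies outside the closed cone, which already puts it at a positive angle from it.
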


\begin{proof}
Throughout we use that a sub-arc of an optimal trajectory is optimal between its endpoints whenever these lie off $\C$.

(a) Near a time $t$ with $\gamma(t)\notin\C$, admissibility imposes no constraint, and a locally shortest curve in the convex set $\overline\Omega$ is locally a straight segment.

(b) On a sliding arc the velocity is tangential a.e.\ by Theorem~\ref{thm:tangential}, and $|\dot\gamma|=1$, so $\gamma$ parametrizes an arc of $\C$ by arc length; it does not reverse direction, since an optimal trajectory is injective on such an arc.
Suppose $\kappa<0$ at some point of the arc, hence on a sub-arc $[\gamma(t_1),\gamma(t_2)]$.
There the curvature vector points to the near side, so for $t_2-t_1$ small the chord $[\gamma(t_1),\gamma(t_2)]$ lies on the near side, meets $\C$ only at its endpoints, and is strictly shorter than the arc it subtends; replacing the arc by the chord gives an admissible competitor, contradicting optimality.
Hence $\kappa\ge0$.

(c) At an isolated contact the trajectory is locally a pair of straight segments by (a); these meet $\C$ only at the contact point, so the triangle hypothesis of Corollary~\ref{cor:nocorner} is satisfied by the discussion following that corollary, and the claim follows.
At an endpoint $y=\gamma(t_1)$ of a sliding arc, the adjacent segment arrives (or leaves) with some direction $e$, and the arc continues along the unit tangent $\tau$ of $\C$ at $y$.
If $e\ne\tau$, the trajectory has a corner at $y$ whose inner angle lies on the near side, since $e$ points toward $\C$ from the near side and $\tau$ is tangent; for small $r>0$ the chord joining $y-re$ to the point of the arc at arc length $r$ from $y$ then lies on the near side, meets $\C$ only at its endpoint, and is strictly shorter than $2r$ by the triangle inequality, again contradicting optimality.
Hence $e=\tau$ and $\gamma$ is $C^1$ at $y$.

(d) Suppose $\tau_p$ lies outside the cone $K:=\{\lambda(-e_{\mathrm{in}})+\mu e_{\mathrm{out}}:\lambda,\mu\ge0\}$, and for small $r>0$ put $a:=p-re_{\mathrm{in}}$ and $b:=p+re_{\mathrm{out}}$.
The closed triangle with vertices $a,p,b$ lies in $K+p$, while the arc of $\C$ issuing from $p$ leaves $p$ along $\tau_p\notin K$; since this arc is $C^1$ and the other sheets are at positive distance from $p$, for $r$ small the triangle meets $\C$ only at $p$.
The segment $[a,b]$ therefore lies in $\Omega$, meets no sheet, and is admissible, and $|b-a|<|a-p|+|p-b|$ because $e_{\mathrm{in}}\ne e_{\mathrm{out}}$.
Replacing the sub-arc $a\to p\to b$ by $[a,b]$ shortens $\gamma$, a contradiction.

(e) Let $z$ lie on the final segment $[p,z_0]$.
The sub-arc of $\gamma$ from $x^*$ to $p$ has length at least $u(p)$, so $u(z)\ge u(p)+|z-p|$.
Conversely, an optimal trajectory to $p$ followed by the segment $[p,z]$, which meets $\C$ only at the tip $p$, is admissible, so $u(z)\le u(p)+|z-p|$.
Hence $u(z)=u(p)+|z-p|$, and the level sets of $u$ in the swept region are arcs of circles centred at $p$.
\end{proof}

\subsection{Numerical simulation of the forward problem}\label{sec:numerics}
\label{sec:forwardnum}

We conclude with a simulation that exhibits the three regimes and tests the
statements above quantitatively.
The configuration is the circular-arc crack of Section~\ref{sec:freeboundary}:
$\C$ is an arc of the unit circle centred at the origin spanning $320^\circ$
(so that the two tips leave a gap at the bottom), and the source is
$x^*=(0,2.2)$, outside the disc, so that the outer lip is the lit one.

The admissible distance is computed by a wavefront solver on a visibility graph
whose nodes are $x^*$, the two tips, and $241$ sample points of the crack.
The two edge types implement the two clauses of Definition~\ref{def:admissible}:
a straight edge between two nodes is allowed when the segment does not cross
$\C$ and, at each crack endpoint it has, lies on the side of the lip carried by
that node --- $d\cdot\nC\le0$ for a departure, $d\cdot\nC\ge0$ for an arrival,
by \eqref{eq:A} and \eqref{eq:Aarr}; and a sliding edge of length
$R\,\Delta\theta$ joins consecutive crack samples, realising the tangential
motion permitted along $\C$. The non-crossing clause is thereby enforced as
liftability (Remark~\ref{rem:lift}), so the cavity enclosed by the arc is
reached only by rounding a tip.
Only the outer lip carries graph nodes. Nothing is lost by this: the inner lip
has $\kappa=-1/R<0$, so by Remark~\ref{rem:flat} no optimal trajectory slides on
it, and a point of the cavity is therefore reached by a straight segment from a
tip --- which is what the computation returns, and what
Theorem~\ref{thm:structure}(e) predicts. The companion computation of
Remark~\ref{rem:lips}, where both lips must be reachable because the question
asked there is which of the two is the cheaper, duplicates every sample
instead.

Figure~\ref{fig:forwardlevels} shows the resulting level sets.
They are circles centred at $x^*$ in the lit region, wrap around the crack in
the shadow, and --- inside the cavity --- are arcs of circles centred at the two
tips, which is the secondary-source law of Theorem~\ref{thm:structure}(e).
The illuminated set $I$, the sliding set $\Sigma$, and the free boundary
$\Gamma$ (the two tangency points from $x^*$) are marked on the crack.

Figure~\ref{fig:forwardcheck} tests the theory against the exact solution.
On the crack the computed values agree with the closed form of
Section~\ref{sec:freeboundary} (tangent length plus creeping arc in the shadow,
Euclidean distance in the lit part) to $2.3\times10^{-8}$.
The tangential gradient equals $1$ to four decimals throughout $\Sigma$ and stays
strictly below $1$ on $I$, vanishing at the foot of the perpendicular from $x^*$;
it rises continuously to $1$ at the two points of $\Gamma$, which is the grazing
statement of Proposition~\ref{prop:grazing}.
Finally, at $358$ sample points inside the cavity the identity
$u(x)=u(p)+|x-p|$ of Theorem~\ref{thm:structure}(e) holds exactly, $p$ being the
nearer tip.

\begin{figure}[htbp]
\centering
\includegraphics[width=0.62\linewidth]{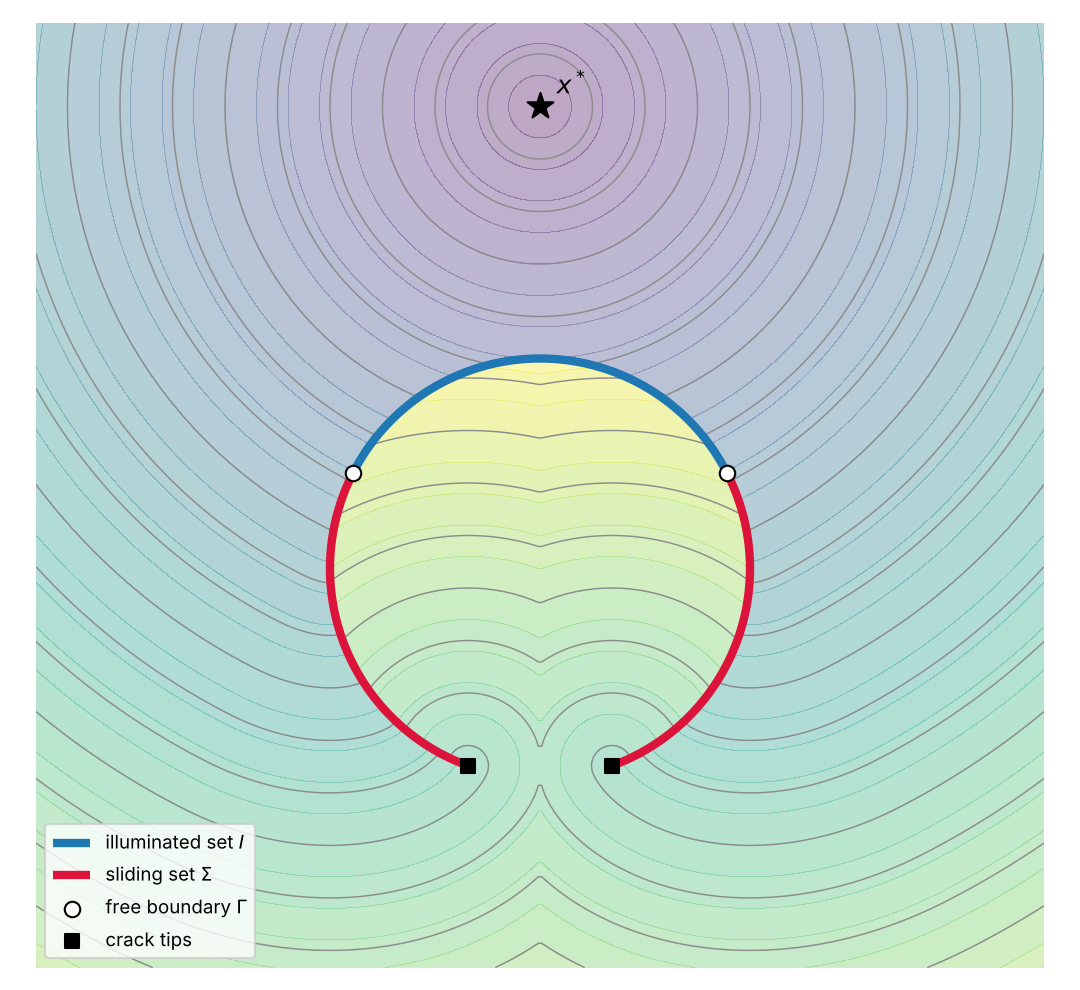}
\caption{Forward simulation for the circular-arc crack with the source $x^*$
outside the disc, so that the outer lip is the lit one. Grey curves are level sets of the
admissible distance $u$: circles centred at $x^*$ in the lit region, wrapping in
the shadow, and arcs of circles centred at the tips inside the cavity. The crack
carries the illuminated set $I$ (blue) and the sliding set $\Sigma$ (red),
separated by the free boundary $\Gamma$ (white circles) at the tangency points.}
\label{fig:forwardlevels}
\end{figure}

\begin{figure*}[htbp]
\centering
\includegraphics[width=\linewidth]{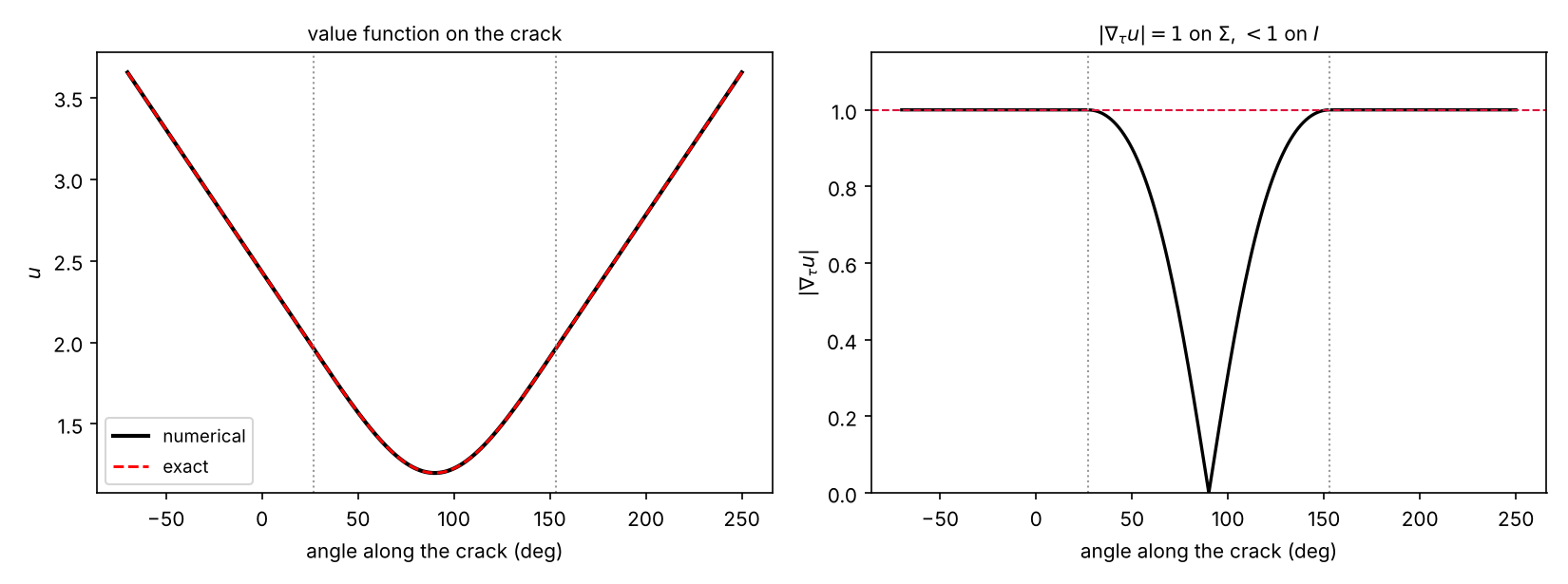}
\caption{Verification on the crack. Left: computed value function against the
closed-form solution (maximum discrepancy $2.3\times10^{-8}$); the dotted lines
mark the free boundary $\Gamma$. Right: the tangential gradient equals $1$ on the
sliding set and is strictly smaller on the illuminated set, rising continuously to
$1$ at $\Gamma$.}
\label{fig:forwardcheck}
\end{figure*}

\begin{remark}[Relation to geometric diffraction]\label{rem:GTD}
The partition $\widehat\C=\Sigma\cup I$, with interface $\Gamma\subset\Sigma$, mirrors the geometric theory of diffraction \cite{Keller1962}: $I$ is the directly illuminated (lit) region, $\Sigma$ carries the creeping (boundary) rays that hug the crack along its near side, $\Gamma$ is the shadow boundary from which diffracted rays are launched tangentially, and the crack tips act as diffraction points seeding the shadow. The present variational derivation reproduces the tangential launching condition of the geometric theory purely from optimality, without a separate diffraction postulate, and Theorem~\ref{thm:structure}(e) makes the role of the tips as secondary sources precise in the plane.
\end{remark}

\section{Conclusion}

We have cast the shortest-path problem in a domain obstructed by a crack as a Hamilton--Jacobi variational inequality on the slit completion of the domain, the crack acting as a two-sided barrier that may be touched and followed but not traversed, with each of its two lips carrying its own state constraint. That the barrier must be modelled as two-sided is not a matter of convenience: Remark~\ref{rem:lips} exhibits a single smooth sheet along which the cheaper lip changes, so no co-orientation of a crack making one face systematically admissible exists in general.

The associated value function is the admissible geodesic distance $\dA$, shown to be the unique viscosity solution of the variational inequality and characterized by a dynamic programming principle together with matching sub- and super-optimality estimates, which combine into a comparison principle (Theorem~\ref{thm:comparison}) sandwiching $\dA$ between every subsolution and every supersolution. Two features of the argument are worth isolating. The supersolution condition is written with the \emph{arrival} set rather than the velocity set, which is what makes the value function a supersolution at illuminated crack points; and the comparison is proved by an inf-convolution in the \emph{intrinsic} metric followed by a descent polygon whose steps are sub-arcs of minimizing geodesics of the slit completion --- so that admissibility of the comparison curve is automatic, and no viability theorem or semiconcavity of the competitor is required.

At the level of trajectories, optimality forces tangential contact at regular crack points --- an optimal trajectory never reflects off the crack --- and in the plane, for a convex domain and a $C^2$ crack, every optimal trajectory is a concatenation of straight segments, sliding arcs, and pivots at tips and junctions, the pivots obeying the one-sided diffraction constraint of Theorem~\ref{thm:structure}(d). On curved cracks each lip splits into an illuminated set $I$ and a sliding set $\Sigma$ with interface $\Gamma$, reproducing from optimality alone the launching geometry of the geometric theory of diffraction; and at $\Gamma$ the defect $1-|\taugrad u|$ vanishes quadratically (Remark~\ref{rem:semiconcave}), so the free boundary is invisible to any first-order criterion. The simulation of Section~\ref{sec:forwardnum} reproduces the three regimes and the tip-diffraction law to the accuracy of the closed-form solution.

Several directions remain open. The free-boundary regularity of Proposition~\ref{prop:regularity} rests on a one-sided, degenerate nondegeneracy condition, and it would be worth knowing whether the quadratic rate found for the circular arc is universal; an affirmative answer would give $\Gamma$ a regularity theory of its own. The structure theorem is stated in the plane, and its extension to $n\ge3$ --- where the tip set is $(n-2)$-dimensional and sliding takes place on a hypersurface --- is open. Finally, the theory yields a natural inverse problem: the admissible geodesic lengths measured between pairs of boundary points determine a crack only through its relative convex hull seen from the boundary, so that the identifiable object is a silhouette rather than the crack itself. We treat that problem, together with the reconstruction schemes it calls for, in a companion paper \cite{CompanionInverse}.


\section*{Acknowledgements}
This research is funded by XXXX under grant number YYYY.
We acknowledge Ho Chi Minh City University of Technology (HCMUT), VNU-HCM for supporting this study.

\end{document}